\newtheorem{remark}{Remark}
\newtheorem{lemma}{Lemma}
\newtheorem{proposition}{Proposition}
\newtheorem{theorem}{Theorem}
\begin{document}

\title{A Parameter Choice Rule for Tikhonov Regularization\\ Based on Predictive Risk}

\author{Federico Benvenuto\thanks{Dipartimento di Matematica, Universit\`{a} degli Studi di Genova, Via Dodecaneso 35, 16146, Genova, Italy. \texttt{benvenuto@dima.unige.it}} \and Bangti Jin\thanks{Department of Computer Science, University College London, Gower Street, London WC1E 6BT, UK. \texttt{bangti.jin@gmail.com,b.jin@ucl.ac.uk}}}

\maketitle

\begin{abstract}
In this work, we propose a new criterion for choosing the regularization parameter in Tikhonov
regularization when the noise is white Gaussian. The criterion minimizes a lower bound of the
predictive risk, when both data norm and noise variance are known, and the parameter choice
involves minimizing a function whose solution depends only on the signal-to-noise ratio. Moreover,
when neither noise variance nor data norm is given, we propose an iterative algorithm which
alternates between a minimization step of finding the regularization parameter and an estimation
step of estimating signal-to-noise ratio. Simulation studies on both small- and large-scale datasets
suggest that the approach can provide very accurate and stable regularized inverse solutions and,
for small sized samples, it outperforms discrepancy principle, balancing principle,
unbiased predictive risk estimator, L-curve method generalized cross validation, and
quasi-optimality criterion, and achieves excellent stability hitherto unavailable.\\
\textbf{Keywords}: Tikhonov regularization, regularization parameter, predictive risk optimization
\end{abstract}

%\begin{AMS}
%  68Q25, 68R10, 68U05
%\end{AMS}

\section{Introduction}

In this work, we study discrete linear inverse problems of recovering an unknown object from noisy indirect measurements.
When the matrix is ill-conditioned, the solution given by the generalized inverse is generally unsatisfactory,
especially in the presence of data noise. Then one often employs a regularization method, which provides a one-parameter
family of candidate solutions and a criterion for selecting the optimal parameter (and a corresponding solution)
\cite{Tikhonov,Engl,ItoJin:2015}. In the case of white Gaussian noise, popular linear regularization methods include
truncated SVD \cite{hansen1987truncatedsvd}, Tikhonov regularization \cite{Tikhonov} and Landweber method
\cite{landweber1951iteration}, and they all require specifying  one scalar parameter, i.e., regularization parameter,
which is notoriously challenging. A large number of choice rules have been proposed in the literature. Most popular
criteria include discrepancy principle (DP) \cite{Morozov:105020}, Unbiased Predictive Risk Estimator (UPRE)
\cite{Stein:1981} and balancing principle (BP) / Lepskii's principle \cite{Lepskii:1990,MathePereverzev:2003,BauerHohage:2005,Mathe:2006}
(see also \cite{HamarikRuas:2009} and references therein), when the noise level is given,
and Generalized Cross Validation (GCV) \cite{golub1979generalized,wahba1990spline}, L-Curve (LC) method \cite{hansen1992analysis,
HansenOLeary:1993} (see also \cite{Reginska:1996,ItoJinTakeuchi:2011} for an algebraic variant), quasi-optimality criterion (QOC)
\cite{TikhonovGlasko:1965,TikhonovGlasko:1979} and Hanke-Raus rule \cite{HankeRaus:1996}, when the noise level is unknown.
The heuristic rules (without requiring a knowledge of the noise level) can categorically grouped into the general context
of choice rules based on functional minimization. See the review \cite{HamarikPalmRaus:2009} for detailed experimental
comparison and in-depth discussions for various linear regularization strategies; see also the work \cite{Kindermann:2011}.
However, none of the aforementioned methods can consistently find a near-optimal regularization parameter for all test problems
and noise realizations in simulation studies.

DP, UPRE and GCV have a common drawback in that they depend heavily on the
sample realization and thus can lead to instable solutions. In particular, DP selects a regularized solution
with a predictive error not smaller than the given noise level \cite{Morozov:105020}. Thus, when the predictive error
is larger (smaller) than the true noise level or the noise level is very small (large), it leads to under-regularized
(over-regularized) solutions, and the error of the approximate solutions may be arbitrarily large. Indeed, in practice,
DP often includes an additional tuning parameter \cite{Engl}. BP is
theoretically better than DP \cite{MathePereverzev:2003,HamarikPalmRaus:2009}: the convergence holds if the ratio of
the actual and given noise is bounded (which may be larger than one), thereby avoiding the convergence issue of DP.
GCV provides an asymptotically unbiased estimator of the predictive risk, but for small sized samples
it can give unstable solutions \cite{wahba1990spline, lukas2012performance}, due to the flatness of the GCV curve.
Finally, UPRE can fail to approximate the predictive error when the data size is not large enough
or the data norm is too small \cite{Craven1978}. Further, the LC method still defies a complete
convergence analysis even though it performs extremely well in many applications \cite{vogel1996}. QOC
is very prominent among heuristic rules: it is easy to implement, gives excellent empirical performance, and further,
both convergence and convergence rates have also been established for a wide variety of noise models, including the infinite-dimensional setting \cite{KindermannNeubauer:2008,BauerKindermann:2008,JinLorenz:2010,Kindermann:2011}. However, it fails almost surely for severely
ill-posed problems with Gaussian white noise \cite{KindermannPereverzyev:2018}.

In this work we propose a novel method for choosing the crucial Tikhonov regularization parameter, which
falls within the realm of rules based on functional minimization. The proposed strategy is inspired by the
predictive risk, an idea that has gained increasing attention in recent years (see the works \cite{Lucka:2018,
LiWerner:2018} and references therein). However, instead of minimizing the predictive risk or its unbiased estimators
directly, it minimizes a lower bound of the predictive risk, when both data norm and noise level are known. The lower
bound is constructed only for the approximation error (a.k.a. bias) and does not change the noise amplification
(a.k.a. variance). This construction ensures that the method chooses a regularization parameter that is an
upper bound of the optimal parameter (with respect to the predictive risk). The minimizer of the lower bound depends
only on the ratio between the data norm and noise level, i.e. signal-to-noise ratio (SNR), and the procedure is
termed as predictive risk optimization (\texttt{PRO}). In practice, this lower bound is quite tight, and thus the choice
rule enjoys excellent accuracy and stability. We discuss its utility under three different information availability scenarios:
(i) SNR is known, (ii) the data noise level is known, but the true data norm is unknown and (iii) both noise level and data
norm are unknown. In case (i), \texttt{PRO} can be applied directly. In case (ii), we employ an unbiased estimator of
the data norm to efficiently approximate \texttt{PRO}. In case (iii), we propose an iterative algorithm, each iteration
of which is composed of two steps: given estimated data norm and noise variance, one uses \texttt{PRO} to find the regularization
parameter, and given the estimated regularization parameter, one re-estimates the data norm and noise variance. The procedure
is termed as iterative \texttt{PRO} (or \texttt{I-PRO}). Theoretically, we prove a number of properties of the rule for Tikhonov
regularization, e.g., convergence to zero as the noise level tends to zero, upper bound property and the monotone convergence of
the \texttt{I-PRO} procedure. Further, to demonstrate their performance, we carry out extensive simulation studies on benchmark
problems from two public software packages, i.e., Regularization tools \cite{hansen1999regularization} and AIR tools \cite{Hansen20122167}, which involve
small-scale 1d applications and large-scale 2d tomography problems, respectively, and give a detailed comparative study
with several popular existing choice rules.

The rest of the paper is organized as follows. In Section \ref{sec:background}, we describe the setup of a discrete inverse
problem in a Gaussian framework and introduce the \texttt{PRO} criterion. In Section \ref{sec:pro}, we describe optimization
strategies for three different information availability scenarios. In Section \ref{sec:Tikh}, we investigate the \texttt{PRO}
criterion for Tikhonov regularization, establish the well-posedness of the rule and bounds on the chosen parameter, analyze the convergence
of the \texttt{I-PRO} scheme, and describe their efficient implementation for large-scale problems. In Section \ref{sec:numer}, we
report simulation results and discuss the pros and cons of \texttt{PRO} and \texttt{I-PRO}, when compared with existing choice rules.
Section \ref{sec:concl} contains a summary of the work.

\section{Background and motivation}\label{sec:background}
Consider the linear inverse problem
\begin{equation}\label{eq:lsip}
\mathbf{g}^\dag = A \mathbf{f}^\dag,
\end{equation}
where $\mathbf{f}^\dag\in\mathbb{R}^m$,  $\mathbf{g}^\dag\in \mathbb{R}^n$ and $A\in\mathbb{R}^{n\times m}$. The goal is to recover the true signal
$\mathbf{f}^\dag$ from a given noisy measurement of $\mathbf{g}^\dag$, denoted by $\mathbf{g}^\eta$.  Throughout, $\mathbf{g}^\eta := \mathbf{g}^\dag + \boldsymbol{\eta}$, where
$\boldsymbol{\eta}$ is a Gaussian distributed random vector with $\mathbb{E}_{\boldsymbol{\eta}}[\boldsymbol{\eta}]=0$,
$\mathbb{E}_{\boldsymbol{\eta}}[\eta_i \eta_j]=\sigma^2 \delta_{ij}$, with $\delta_{ij}$
denote the standard Kronecker symbol, where $\mathbb{E}_{\boldsymbol{\eta}}[\cdot]$ denotes taking expectation with respect to the
distribution of ${\boldsymbol{\eta}}$. A linear regularization method $R_\alpha$ provides a family of estimates of the solution
\begin{equation}\label{eq:reg}
\mathbf{f}_\alpha^\eta = R_\alpha \mathbf{g}^\eta,
\end{equation}
for any realization $\mathbf{g}^\eta$. Upon ignoring the usual scaling factor $1/n$, the predictive risk $p_\alpha(\mathbf{g}^\eta)$ is defined as
\begin{equation*}
p_\alpha(\mathbf{g}^\eta) =  \mathbb{E}_{{\boldsymbol{\eta}}} [\| \mathbf{g}_\alpha^\eta - \mathbf{g}^\dag \|^2 ],
\end{equation*}
where %$\E_\eta[\cdot]$ denotes taking expectation with respect to the distribution of $\eta$,
$\|\cdot\|$ denotes the Euclidean norm, and $\mathbf{g}_\alpha^\eta = X_\alpha \mathbf{g}^\eta$ is the predictive data, where
\begin{equation}\label{eq:io}
X_\alpha = A R_\alpha
\end{equation}
is the so-called influence matrix. Since the predicted data $\mathbf{g}^\eta_\alpha$ can be split into
\begin{equation*}
\mathbf{g}^\eta_\alpha = X_\alpha \mathbf{g}^\dag + X_\alpha {\boldsymbol{\eta}},
\end{equation*}
by the standard bias-variance dcomposition, the predictive risk $p_\alpha(\mathbf{g}^\eta)$ is given by
\begin{equation}\label{bias-variance}
p_\alpha(\mathbf{g}^\eta) = \| (X_\alpha - I) \mathbf{g}^\dag\|^2 + \sigma^2 \|X_\alpha\|_F^2,
\end{equation}
where $\|\cdot\|_F$ is the Frobenius norm. Equation \eqref{bias-variance} indicates the predictive risk $p_\alpha(\mathbf{g}^\eta)$
consists of two terms: the bias term $\| (X_\alpha - I) \mathbf{g}^\dag\|^2 $ due to the approximation error, which depends only the regularity of the
exact solution $\mathbf{f}^\dag$, and the variance term $\sigma^2 \|X_\alpha\|_F^2$ due to noise amplification.

Note that the expression in
\eqref{bias-variance} is not directly computable, since the exact data $\mathbf{g}^\dag$ is unknown. However, this
quantity may be estimated using sample data $\mathbf{g}^\eta$ to obtain various predictive risk estimators for parameter choice.
This idea has long been pursued in the literature; see, e.g., \cite{Mallows:1973,Thompson:1991,GalKat:1992,Ramani:2008,DelVaiter:2014,
LevinMeltzer:2017,Lucka:2018,LiWerner:2018} and references therein for various practical applications and theoretical developments.
For example, with $\mathrm{tr}(\cdot)$ denoting the trace of a matrix, the following two estimators are frequently adopted:
\begin{equation*}
 \hat p_\alpha(\mathbf{g}^\eta) = \|A\mathbf{f}^\eta_\alpha-\mathbf{g}^\eta\|^2 - 2\sigma^2\mathrm{tr}(I-X_\alpha)
\end{equation*}
for UPRE and
\begin{equation*}
  \tilde{p}_\alpha(\mathbf{g}^\eta) = \frac{\|A\mathbf{f}^\eta_\alpha-\mathbf{g}^\eta\|^2}{\mathrm{tr}(I-X_\alpha)^2}
\end{equation*}
for GCV, when the noise level $\sigma^2$ is known and unknown, respectively. We refer to \cite{LevinMeltzer:2017}
for further approximations of the predictive risk in parameter choice. However, it is known that they tend to
suffer from the notorious overfitting issue, i.e., the selected parameter tends to be too small \cite{Lucka:2018},
which is also confirmed by the simulation study in Section \ref{sec:numer}. See also the recent work \cite{LiWerner:2018} for
the order optimality of risk estimators for a class of (ordered) filter based regularization methods, where the numerical
challenges are also highlighted \cite[Fig. 1]{LiWerner:2018}.

GCV and UPRE estimate the minimum value of the predictive risk $p_\alpha(\mathbf{g}^\eta)$ using the noisy
sample $\mathbf{g}^\eta$. The accuracy of these estimators depends strongly on the sample size, and
when the size of $\mathbf{g}^\eta$ is small, the estimation can be unsatisfactory. To illustrate this
point, in Fig. \ref{motivation}(a), we show the predictive risk, its bias-variance decomposition
and the optimal $\alpha$ (with respect to predictive risk), for \textit{shaw} from Regularization tools.
In Fig. \ref{motivation}(b), we show the GCV and  UPRE estimators (including their
global minimizers).  Clearly, both curves do not have a unique minimizer, and more importantly,
the global minimizers are smaller than the optimal one by several orders of magnitude.
Consequently, the corresponding reconstructions by GCV and UPRE are hugely corrupted by noise amplification
and completely useless. This kind of behavior can be observed in the majority of test problems
available in the package Regularization tools and actually for each problem therein, its presence has a
non-negligible significant percentage of randomly generated samples $\mathbf{g}^\eta$.

\begin{figure}
\begin{minipage}{0.49\textwidth}
\includegraphics[width=0.91\textwidth]{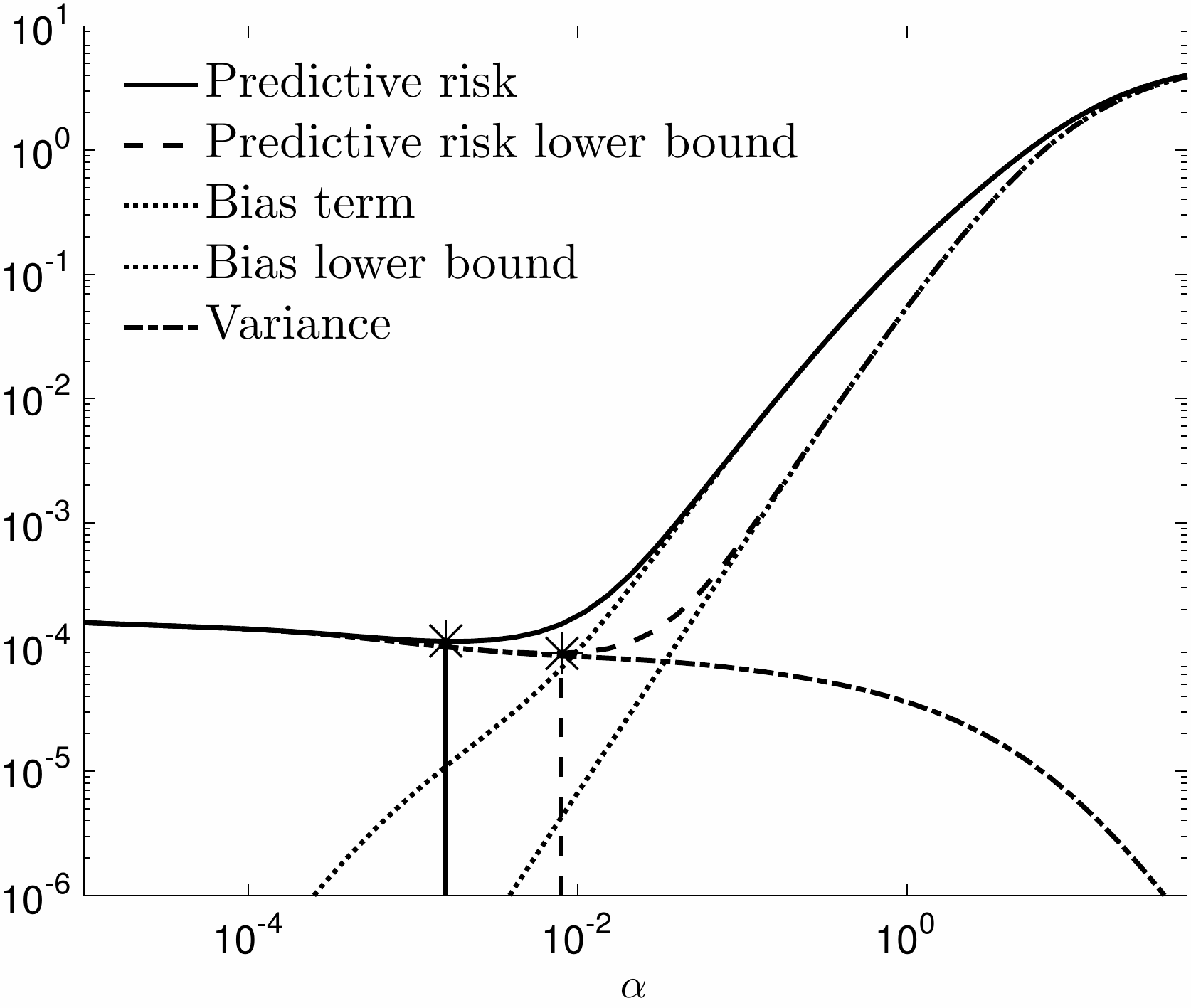}
\end{minipage}
\begin{minipage}{0.49\textwidth}
\includegraphics[width=.99\textwidth]{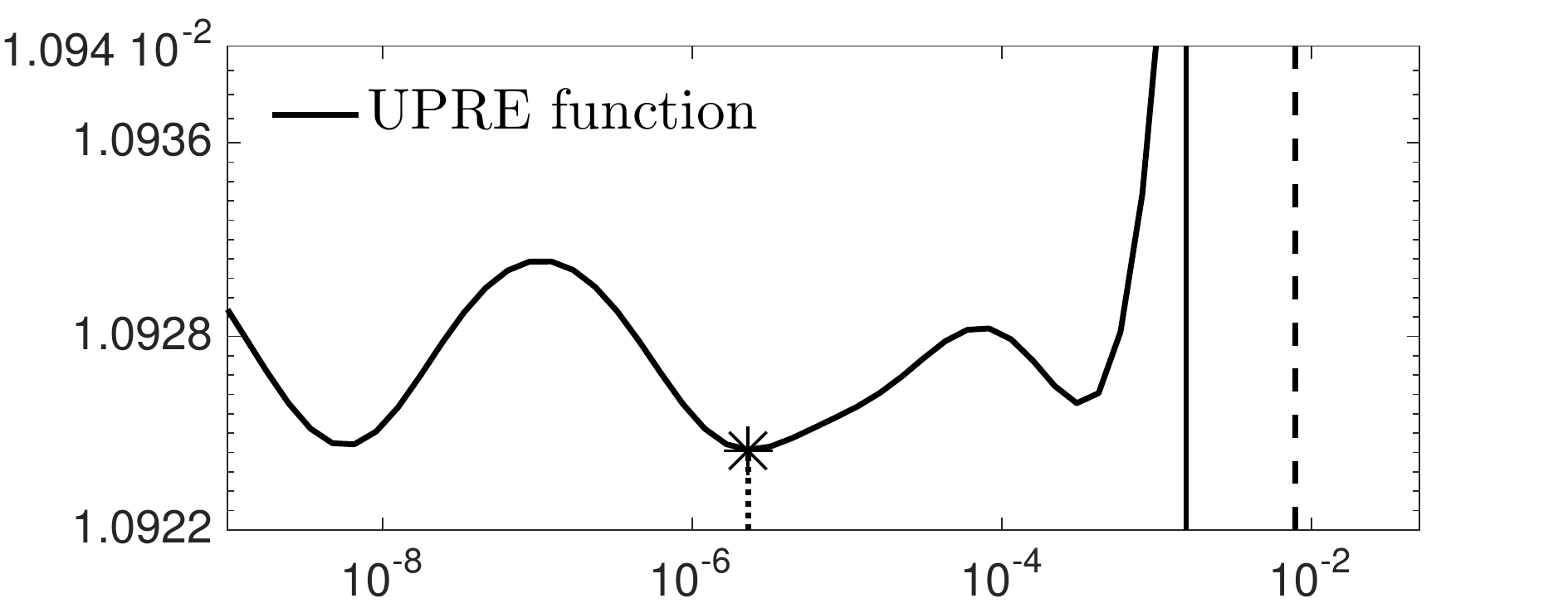}
\includegraphics[width=.99\textwidth]{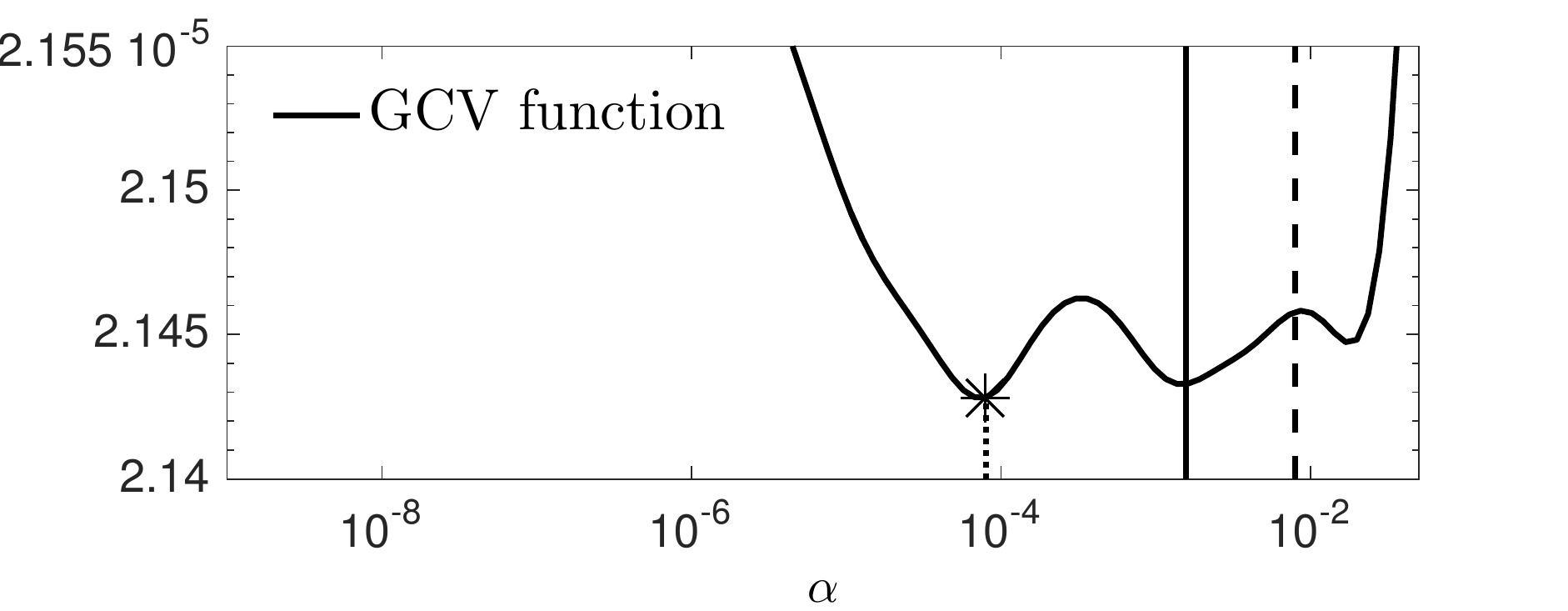}
\end{minipage}
\caption{Behavior of the predictive risk and the lower bound for
\textit{shaw}.\label{motivation}}
\end{figure}

In this work we present a novel parameter choice strategy inspired by predictive risk, which aims
at overcoming the aforementioned drawback of lacking robustness. It gives
an upper bound of the optimal parameter, and thus can provide solutions sufficiently accurate yet
very stable with respect to sample variation. The rule is based on the following simple
observation:
\begin{equation*}
 s_n(I-X_\alpha)^2\|\mathbf{g}^\dag\|^2\leq \|(X_\alpha-I)\mathbf{g}^\dag\|^2\leq s_1(I-X_\alpha)^2\|\mathbf{g}^\dag\|^2,
\end{equation*}
where $s_n(I-X_\alpha)$ and $s_1(I-X_\alpha)$ denote the minimum and maximum singular value
of the matrix $I-X_\alpha$, respectively. Then given the norm of
the true data $\mathbf{g}^\dag$, i.e., $\|\mathbf{g}^\dag\| = \rho$, we can bound the predictive risk $p_\alpha(\mathbf{g}^\eta)$
both from above and below accordingly using the preceding bounds. It turns out that the upper bound
is not useful since it can trivially reduce to a constant. Surprisingly, graphically, the lower bound has the
same shape as the predictive risk and possesses a minimizer close to the optimal one. In
Fig. \ref{motivation}(a), we show the minimizers of the predictive risk (left
one) and lower bound (right one), and the lower bound of the bias term, which empirically motivates
the use of lower bound for parameter choice. Thus, under the hypothesis $\|\mathbf{g}^\dag\|=\rho$, we take the minimum of the
predictive risk
\begin{equation}\label{eq:lower_bound}
T_\rho(\alpha) = \min_{\|\mathbf{g}^\dag\| = \rho} p_\alpha (\mathbf{g}^\eta),
\end{equation}
and then seek the minimizer of $T_\rho(\alpha)$ along $\alpha$ :
\begin{equation}\label{eq:orp}
\alpha^* =\arg\min_{\alpha} T_\rho(\alpha),
\end{equation}
which approximates the optimal parameter and thus can be used as a choice rule.
Note that the function $T_\rho(\alpha)$ depends on the variance $\sigma^2$ of the noise $\eta$ and the (squared)
data norm $\rho^2$. We can rewrite the lower bound of the predictive risk as
\begin{equation}\label{eq:sb}
T_{({\rho^2},{\sigma^2})}(\alpha) =   \rho^2  s_{n}(X_\alpha - I)^{2} + \sigma^2 \|X_\alpha\|_F^2 .
\end{equation}
In the construction of the approximation \eqref{eq:sb}, the variance term remains intact, but the bias term is
subsumed by a lower bound. Since the bias term is increasing and the variance one is decreasing, $\alpha^*$
is an upper bound of the ``optimal'' regularization parameter, which ensures the non-overfitting of the choice
rule; see Proposition \ref{prop:upper} for a proof in the case of Tikhonov regularization. It is worth noting
that problem \eqref{eq:sb} has a nice variational structure similar to the standard Tikhonov regularization
itself: the first term can be viewed as fidelity (with respect to the identity matrix $I$), and the second term
is a penalty, controlling the size). This variational structure lends itself to rigorous analysis.

In \eqref{eq:sb}, we give the explicit dependency of $T_{({\rho^2},{\sigma^2})}(\alpha)$ on the noise variance $\sigma^2$.
Clearly, its minimizer(s) $\alpha^*$ depends only on the SNR $\rho^2/\sigma^2$. Thus, the choice rule
determines a proper level of regularization according to the SNR. Nevertheless, the latter information is not always
available in practice. In Section \ref{sec:pro}, we discuss three different strategies for minimizing the function
$T_{({\rho^2},{\sigma^2})}(\alpha)$ according to the available SNR information.

\section{Optimization strategies}\label{sec:pro}
Now we apply the lower bound $T_{({\rho^2},{\sigma^2})}(\alpha)$ given in \eqref{eq:sb} for parameter choice.
The minimization of $T_{({\rho^2},{\sigma^2})}(\alpha)$ is possible only if both $\rho$ and $\sigma$ are known.
When some of this information is unavailable, as is often the case in practice, we propose to
approximate $T_{({\rho^2},{\sigma^2})}(\alpha)$ (and accordingly the minimizer $\alpha^*$). We describe three
optimization approaches according to the level of available information. The second and third
cases are more realistic in practical applications.

\subsection{$\rho$ and $\sigma$ known}
First, we consider the case of known $\rho$ and $\sigma$. This is the least likely case in practice,
since usually the true data norm $\|\mathbf{g}^\dag\|$ is not known \textit{a priori}, but theoretically it is the easiest
case. Then the function $T_{(\rho^2,\sigma^2)}$ applies directly. Note that minimizers of $T_{(\rho^2,\sigma^2)}$ only depend
on the SNR $\rho^2/\sigma^2$ or equivalently
\begin{equation}\label{eq:snr}
\xi := 10 \log_{10} \left( \frac{\rho^2}{n\sigma^2} \right),
\end{equation}
which is generally known as SNR in decibel (dB), according to ISO 15739:2017, and will also
be called SNR below, by slightly abusing the terminology. Note that given SNR $\xi$, the minimizer does
not depend on noisy data $\mathbf{g}^\eta$ and the choice rule is of {\it a priori} nature \cite{Engl}. For Tikhonov regularization, it
can be shown that the rule is indeed well defined, and the function $T_{(\rho^2,\sigma^2)}$ is relatively well behaved and
numerically tractable; see Section \ref{sec:Tikh} below for further details.

\subsection{$\rho$ unknown and $\sigma$ known}

In many applications, the SNR $\xi$ is unknown and only an estimate of $\sigma^2$ is available. This happens when
the measurement is acquired by an industrial device, and by collecting multiple repetitive measurements of a
zero excitation signal, the read-out noise allows estimating the noise level $\sigma^2$ (e.g., via maximum likelihood).
This is a standard calibration procedure for providing the noise level (measurement precision) $\sigma^2$ of a
device in industry. Using the following unbiased estimator of $\rho^2$
\begin{equation}\label{eq:ume}
\hat{\rho^2} = \| \mathbf{g}^\eta \|^2 - n \sigma^2,
\end{equation}
we can define an $M$-estimator of $\alpha^*$. Specifically, we take the minimizer
\begin{equation}\label{eq:ue}
\alpha^* := \arg \min_{\alpha} \{U^\eta(\alpha):=T_{({\hat{\rho^2}},{\sigma^2})}(\alpha)\}
\end{equation}
where $U^\eta(\alpha)$ is an unbiased estimator of $T_{({\rho^2},{\sigma^2})}(\alpha)$, i.e.
$\mathbb{E}_{\boldsymbol{\eta}}[U^\eta(\alpha)] = T_{({\rho^2},{\sigma^2})}(\alpha).$
Upon slightly abusing terminology, we also refer it to as \texttt{PRO}.

\subsection{$\rho$ and $\sigma$ unknown}\label{blind-section}
When both $\rho$ and $\sigma$ are unknown, \texttt{PRO} cannot be applied directly, and we propose an alternating estimating/minimization strategy. The procedure is summarized in the \texttt{I-PRO} algorithm below.

Formally, the \texttt{I-PRO} algorithm alternates between estimating the SNR (given $\alpha$) and
estimating $\alpha$ (given the SNR), in a manner similar to the classical EM algorithm or
coordinate ascent in variational Bayes. We term it as iterative predictive risk optimization
(\texttt{I-PRO}). Surprisingly, this simple procedure can provide excellent estimates of the
SNR for ill-posed problems, and the obtained solutions are often very close to the ones
corresponding to known $\rho$ and $\sigma$, as confirmed by extensive simulation results in Section \ref{sec:numer}.

\begin{algorithm}
\caption{\texttt{I-PRO}: Iterative Predictive Risk Optimization.\label{alg:ipro}}
\begin{algorithmic}[1]
\STATE{Fix $\epsilon = 10^{-16}$, $\alpha_0\neq0$, $\alpha_1=0$, $k=1$}
\WHILE{$ | \alpha_k - \alpha_{k-1} | > \epsilon \alpha_k $}
\STATE{Update the variance according to formula
\begin{equation}\label{eq:se}
{\sigma^2_{k}}(\alpha) = \frac{1}{n} \|\mathbf{r}^\eta_{\alpha_{k}}\|^2
\end{equation}
where $\mathbf{r}^\eta_\alpha := A \mathbf{f}^\eta_{\alpha} - \mathbf{g}^\eta$ is the residual.}
\STATE{Update the signal norm using
\begin{equation}\label{eq:le}
{\rho^2_{k}}(\alpha_{k}) = \| \mathbf{g}^\eta \|^2 - \|\mathbf{r}^\eta_{a_{k}}\|^2.
\end{equation}}
\STATE\label{line3}{Update $\alpha_k$ by computing minimizer of equation \eqref{eq:sb} given $\rho^2_k$ and $\sigma_k^2$, i.e.,
\begin{equation}
\label{eq:argmink}
\alpha_{k+1} := \arg\min_{\alpha} T_{({\rho_k^2},{\sigma_k^2})}(\alpha) .
\end{equation}}
\ENDWHILE
\RETURN $\alpha_{k}$
\end{algorithmic}
\end{algorithm}

Computationally, \texttt{PRO} is based on two invariants, i.e., the smallest singular value $s_n(X_\alpha-I)$ of
$X_\alpha-I$ and trace of $X_\alpha^T X_\alpha$. They can be both computed by means of SVD of the matrix $A$, whereas $X_\alpha$ can
be written in terms of SVD of $A$, which is very convenient when the problem size is small. Otherwise, they can be
computed without the SVD of $A$, using randomized algorithms; see Section \ref{section:svd-free} for further details.

\section{Tikhonov regularization}\label{sec:Tikh}

Now we consider \texttt{PRO} and \texttt{I-PRO} for standard Tikhonov regularization.
The corresponding influence matrix $X_\alpha\in\mathbb{R}^{n\times n}$ is given by
\begin{equation*}
X_\alpha = A (A^*A+\alpha I) ^{-1}A^*.
\end{equation*}
Clearly, $X_\alpha$ is symmetric and positive semidefinite, with its singular values $s_n(X_\alpha)\leq\ldots s_1(X_\alpha)
< 1$. Let $s_1 \geq ... \geq s_r>0=s_{r+1}=\ldots=s_{\min(m,n)}$ be the
singular values of the matrix $A$, with $r$ being the rank of $A$. Then the singular values $s_i(X_\alpha)$ of $X_\alpha$ are given by
\begin{equation}\label{eq:op_norm}
s_i(X_\alpha) = \left\{\begin{array}{ll}
         \displaystyle \frac{s_i^2}{s_i^2+\alpha},& i = 1,\ldots,r,\\
           0 , & i = r+1,\ldots, n.\\
           \end{array}\right.
\end{equation}
Since $s_{n} (X_\alpha -I) = 1 - s_{1}(X_\alpha) >  0$, a lower bound of the predictive risk $p_\alpha(\mathbf{g}^\eta)$ is given by
\begin{equation}\label{eq:tm}
T_{(\rho^2,\sigma^2)}(\alpha) =   \rho^2 \frac{\alpha^2}{(s_1^2+\alpha)^2} + \sigma^2 \sum_{i=1}^r \frac{s_i^4}{(s_i^2+\alpha)^2}.
\end{equation}
The explicit form of $T_{(\rho^2,\sigma^2)}$ facilitates the analytical study of \texttt{PRO}. It is convenient to introduce
two auxiliary functions. We denote the first and the second terms (involving $\alpha$) of equation \eqref{eq:tm} by
$f_1(\alpha)$ and $f_2(\alpha)$, respectively, i.e.,
\begin{equation*}
  f_1 (\alpha) = \frac{\alpha^2}{(\alpha+s_1^2)^2} \quad\mbox{and}\quad f_2(\alpha) = \sum_{i=1}^r\frac{s_i^4}{(\alpha+s_i^2)^2}.
\end{equation*}
Next, for any fixed $h=\sigma^2/\rho^2$, we define a parameterized function
$$
T_h(\alpha) = f_1(\alpha)+hf_2(\alpha)\quad\mbox{and}\quad \alpha^* = \arg\min_{\alpha\in[0,s_1^2/2]} T_h(\alpha).
$$
The re-parameterization does not change the minimizer $\alpha^*$, but is more convenient for the analysis.
The following identities hold for the function $T_h(\alpha)$ :
\begin{equation*}
  \lim_{\alpha \to0^+} T_h(\alpha) = rh\quad \mbox{and}\quad \lim_{\alpha\to\infty} T_h(\alpha)=1.
\end{equation*}

\subsection{The {\tt PRO} function $T_h(\alpha)$}
The next lemma gives the convexity and monotonicity of $f_1(\alpha)$ and $f_2(\alpha)$.
\begin{lemma}\label{lem:f}
The function $f_1(\alpha)$ is convex over $[0,s_1^2/2]$ and monotonically increasing, and $f_2(\alpha)$ is convex and
monotonically decreasing over $[0,\infty)$.
\end{lemma}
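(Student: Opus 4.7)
The plan is to establish both claims by direct differentiation, since $f_1$ and each summand of $f_2$ are simple rational functions of $\alpha$ with a common form $s^4/(\alpha+s^2)^2$ (for $f_2$) and $\alpha^2/(\alpha+s_1^2)^2$ (for $f_1$).

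First I would handle $f_1$. A short quotient-rule computation gives
\begin{equation*}
f_1'(\alpha) = \frac{2\alpha s_1^2}{(\alpha+s_1^2)^3},
\end{equation*}
which is strictly positive for $\alpha>0$, yielding the monotone increase on $[0,s_1^2/2]$ (indeed on all of $[0,\infty)$). Differentiating once more gives
\begin{equation*}
f_1''(\alpha) = \frac{2s_1^2\,(s_1^2-2\alpha)}{(\alpha+s_1^2)^4},
\end{equation*}
whose sign is governed entirely by the factor $s_1^2-2\alpha$. Thus $f_1''(\alpha)\ge 0$ precisely on $[0,s_1^2/2]$, which is exactly the interval where convexity is asserted.

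For $f_2$, I would work summand by summand. Writing $\varphi_i(\alpha)=s_i^4/(\alpha+s_i^2)^2$ for $i=1,\dots,r$ (where $s_i>0$), direct computation gives
\begin{equation*}
\varphi_i'(\alpha) = -\frac{2s_i^4}{(\alpha+s_i^2)^3} < 0,\qquad \varphi_i''(\alpha)=\frac{6s_i^4}{(\alpha+s_i^2)^4} > 0
\end{equation*}
for all $\alpha\ge 0$, so each $\varphi_i$ is strictly decreasing and strictly convex on $[0,\infty)$. Since $f_2=\sum_{i=1}^r \varphi_i$ and the class of decreasing convex functions is closed under finite sums, $f_2$ is convex and monotonically decreasing on $[0,\infty)$.

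There is no real obstacle here; the only subtle point is recognizing that the restriction to $[0,s_1^2/2]$ in the statement about $f_1$ is forced by the sign change of $s_1^2-2\alpha$ in $f_1''$, and is not an artifact — beyond $s_1^2/2$, $f_1$ becomes concave. This is why the re-parameterized optimization $\alpha^*=\arg\min_{\alpha\in[0,s_1^2/2]}T_h(\alpha)$ is phrased over precisely this interval in the preceding display.
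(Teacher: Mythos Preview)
Your proof is correct and follows essentially the same approach as the paper: both compute $f_1'$, $f_1''$, $f_2'$, $f_2''$ directly and read off the signs, with the convexity interval for $f_1$ determined by the factor $s_1^2-2\alpha$ in $f_1''$. Your summand-by-summand treatment of $f_2$ and the closing remark about the interval $[0,s_1^2/2]$ are minor expository additions, not a different argument.
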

\begin{proof}
Direct computation gives
\begin{eqnarray*}
  f'_1(\alpha) & = \frac{2\alpha}{(\alpha+s_1^2)^2}-\frac{2\alpha^2}{(\alpha+s_1^2)^3}=\frac{2\alpha s_1^2}{(\alpha+s_1^2)^3}>0,\\
  f''_1(\alpha) & = \frac{2s_1^2(\alpha+s_1^2)-6\alpha s_1^2}{(\alpha+s_1^2)^4} = \frac{2s_1^2(s_1^2-2\alpha)}{(\alpha+s_1^2)^4}.
\end{eqnarray*}
Thus, $f_1$ is convex over the interval $(0,s_1^2/2]$. Further,
\begin{equation*}
  f_2'(\alpha) = -\sum_{i=1}^r\frac{2s_i^4}{(\alpha+s_i^2)^3}<0,\quad f''_2(\alpha) = \sum_{i=1}^r\frac{6s_i^4}{(\alpha+s_i^2)^4}.
\end{equation*}
This completes the proof of the lemma.
\end{proof}

\begin{lemma}\label{lem:al}
For any minimizer $\alpha^*(h) \in [0,s_1^2/2]$ to the function $T_h(\alpha)$, it is strictly
monotonically increasing in $h$.
\end{lemma}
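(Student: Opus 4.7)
The plan is to apply the classical two-sided optimality comparison (the ``swap trick''). Fix $h_1<h_2$ and let $\alpha_i^\ast=\alpha^\ast(h_i)\in[0,s_1^2/2]$ for $i=1,2$. The defining optimality of each $\alpha_i^\ast$ gives
\[
T_{h_1}(\alpha_1^\ast)\le T_{h_1}(\alpha_2^\ast),\qquad T_{h_2}(\alpha_2^\ast)\le T_{h_2}(\alpha_1^\ast).
\]
Substituting $T_h=f_1+h f_2$ and summing these inequalities cancels the $f_1$-contributions, leaving
\[
(h_2-h_1)\bigl(f_2(\alpha_1^\ast)-f_2(\alpha_2^\ast)\bigr)\ge 0.
\]
Since $h_2-h_1>0$, this yields $f_2(\alpha_1^\ast)\ge f_2(\alpha_2^\ast)$, and Lemma~\ref{lem:f} (whose proof in fact shows $f_2'<0$ pointwise on $[0,\infty)$) implies that $f_2$ is strictly decreasing, so $\alpha_1^\ast\le\alpha_2^\ast$. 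This establishes weak monotonicity.

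To upgrade to strict inequality, I would argue by contradiction that $\alpha_1^\ast=\alpha_2^\ast=\alpha^\ast$ cannot hold. The boundary value $\alpha^\ast=0$ is ruled out for every $h>0$, because $f_1'(0)=0$ and $f_2'(0)<0$ give $T_h'(0)=h f_2'(0)<0$, so $T_h$ strictly decreases at the origin. For an interior $\alpha^\ast\in(0,s_1^2/2)$, the first-order condition $T_{h_i}'(\alpha^\ast)=0$ forces
\[
h_i=-\frac{f_1'(\alpha^\ast)}{f_2'(\alpha^\ast)},\qquad i=1,2,
\]
which is impossible when $h_1\ne h_2$, delivering the desired contradiction.

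An equivalent and slightly cleaner packaging is to introduce the implicit-function surrogate $g(\alpha):=-f_1'(\alpha)/f_2'(\alpha)$ on $(0,s_1^2/2]$ and verify it is strictly increasing: using the four sign statements extracted in Lemma~\ref{lem:f} ($f_1',f_1'',f_2''>0$ and $f_2'<0$ on this interval), a direct computation gives
\[
g'(\alpha)=\frac{f_1'(\alpha)f_2''(\alpha)-f_1''(\alpha)f_2'(\alpha)}{[f_2'(\alpha)]^2}>0,
\]
so any interior critical point satisfies $\alpha^\ast(h)=g^{-1}(h)$, strictly increasing in $h$. The one point I expect to require care is the right endpoint $\alpha^\ast=s_1^2/2$, where the first-order identity is replaced by the inequality $T_h'(s_1^2/2)\le 0$ and the swap trick alone does not exclude $\alpha_1^\ast=\alpha_2^\ast=s_1^2/2$; I would dispatch this by restricting attention to the range of $h$ in which the minimizer falls in the open interval $(0,s_1^2/2)$ (which is exactly the regime relevant to the \texttt{PRO} rule), so that the FOC argument above applies.
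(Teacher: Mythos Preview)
Your argument is correct and takes a genuinely different route from the paper. The paper proceeds via the implicit function theorem: after noting (as you do) that $\alpha^*=0$ is excluded because $T_h'(0)<0$, it differentiates the first-order condition $f_1'(\alpha^*)+hf_2'(\alpha^*)=0$ with respect to $h$ to obtain
\[
\frac{d\alpha^*}{dh}=\frac{-f_2'(\alpha^*)}{f_1''(\alpha^*)+hf_2''(\alpha^*)}>0,
\]
with positivity read off from the sign pattern in Lemma~\ref{lem:f}. Your primary argument---the optimality swap---is more elementary: it uses only the minimizing property of $\alpha_i^\ast$ and the strict monotonicity of $f_2$, and never needs smoothness of $h\mapsto\alpha^*(h)$ or the implicit function machinery. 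Your alternative packaging via $g(\alpha)=-f_1'(\alpha)/f_2'(\alpha)$ is essentially the paper's computation in inverse-function form, since at an interior critical point $d\alpha^*/dh=1/g'(\alpha^*)$. The paper's route buys an explicit derivative formula (potentially useful for quantitative bounds later), while yours is logically lighter. Regarding the right endpoint $\alpha^*=s_1^2/2$ that you flag: the paper's proof tacitly assumes the first-order equality holds at $\alpha^*$ and so carries the same gap; your explicit caveat is in fact more careful than the original.
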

\begin{proof}
We prove the assertion by the implicit function theorem. The existence of a unique
$\alpha^*\equiv \alpha^*(h)\in [0,s_1^2/2]$ follows from the strict convexity of
$T_h$ over the interval, cf Lemma \ref{lem:f}. First, we claim that $\alpha^*(h)$
is not $\alpha=0$. Indeed, it follows from straightforward computation that
\begin{equation}\label{eqn:deriv-Th}
  T_h'(\alpha)=\frac{2\alpha s_1^2}{(\alpha+s_1^2)^3}-h \sum_{i=1}^r\frac{2s_i^4}{(\alpha+s_i^2)^3}
\end{equation}
and thus
\begin{equation*}
\lim_{\alpha\to 0^+}T_h'(\alpha)  = -2h\sum_{i=1}^rs_i^{-2}<0.
\end{equation*}
Thus, $T_h$ is differentiable at $\alpha^*(h)$, and the optimal $\alpha^*\equiv \alpha^*(h)$ satisfies the optimality condition
\begin{equation*}
  f'_1(\alpha^*)+hf_2'(\alpha^*) = 0.
\end{equation*}
Then by the chain rule, we have
\begin{equation*}
  f''_1(\alpha^*)\frac{d\alpha^*}{dh} + f_2'(\alpha^*) + hf''_2(\alpha^*)\frac{d\alpha^*}{dh} = 0,
\end{equation*}
i.e.,
\begin{equation*}
  \frac{d\alpha^*}{dh} = \frac{-f_2'(\alpha^*)}{f_1''(\alpha^*)+hf''_2(\alpha^*)}
\end{equation*}
Then in view of Lemma \ref{lem:f}, the denominator is strictly positive for any $\alpha^*>0$,
from which the desired assertion follows directly.
\end{proof}

The next proposition summarizes some properties of the minimizer to $T_h(\alpha)$.
\begin{proposition}\label{lemma}
Let $r$ be the rank of the matrix $A$. Then the following statements hold for the function $T_h(\alpha)$.
\begin{itemize}
\item[(i)] Over the interval $[0,s_1^2/2]$, the function $T_h$ has a unique minimizer $\alpha^*$.
\item[(ii)] If $s_1>0$ and $h\leq (27r)^{-1}$, the
function $T_h(\alpha)$  admits a unique global minimizer $\alpha^*$ in $(0,s_1^2/2]$.
\item[(iii)] For $h\leq \zeta : = s_1^2/\mathrm{tr}(A^*A)$, the minimizer $\alpha^*\equiv \alpha^*(h)$ satisfies
\begin{equation*}
  s_1^2 h \leq  \alpha^* \leq (1-(h/\zeta)^\frac13)^{-1}s_1^2(h/\zeta)^\frac13.
\end{equation*}
\end{itemize}
\end{proposition}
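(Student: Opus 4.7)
My plan is to exploit the decomposition $T_h = f_1 + h f_2$ together with the convexity and monotonicity properties from Lemma~\ref{lem:f} and the first-order condition derived in the proof of Lemma~\ref{lem:al}, treating each assertion in turn. Part (i) will follow immediately from strict convexity and compactness: the formulas for $f_1''$ and $f_2''$ in Lemma~\ref{lem:f} show both are strictly positive on $[0, s_1^2/2)$, so $T_h$ is strictly convex on the compact interval $[0, s_1^2/2]$, giving existence of a minimizer by continuity and uniqueness by strict convexity.

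For part (ii), I would upgrade the minimizer of part (i) to the unique global minimizer on $[0, \infty)$ by a value comparison. First, the computation $T_h'(0) = -2h \sum_{i=1}^r s_i^{-2} < 0$ from Lemma~\ref{lem:al} excludes $\alpha^* = 0$, so $\alpha^* \in (0, s_1^2/2]$. Next, since $0$ lies in the restricted interval, $T_h(\alpha^*) \leq T_h(0) = h f_2(0) = hr \leq 1/27$ by the hypothesis on $h$. On the other hand, $f_1$ is strictly increasing on $[0, \infty)$ with $f_1(s_1^2/2) = 1/9$, and the hypothesis $s_1 > 0$ forces $h f_2(\alpha) > 0$, so $T_h(\alpha) > f_1(\alpha) \geq 1/9 > 1/27 \geq T_h(\alpha^*)$ for every $\alpha > s_1^2/2$. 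This strict separation rules out any other global minimum.

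For part (iii) I would work directly from the first-order condition
\[
\frac{\alpha^* s_1^2}{(\alpha^* + s_1^2)^3} = h \sum_{i=1}^r \frac{s_i^4}{(\alpha^* + s_i^2)^3}
\]
established in the proof of Lemma~\ref{lem:al}. Retaining only the $i = 1$ term on the right gives $\alpha^* \geq h s_1^2$ immediately. For the upper bound, the key ingredient is the elementary estimate $(\alpha^* + s_i^2)^3 \geq (\alpha^*)^2 s_i^2$ (apply $\alpha^* + s_i^2 \geq \alpha^*$ twice and $\alpha^* + s_i^2 \geq s_i^2$ once), which yields $\sum_i s_i^4/(\alpha^* + s_i^2)^3 \leq \mathrm{tr}(A^*A)/(\alpha^*)^2 = s_1^2/(\zeta (\alpha^*)^2)$. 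Substituting back simplifies to $(\alpha^*/(\alpha^* + s_1^2))^3 \leq h/\zeta$, and taking cube roots followed by solving for $\alpha^*$ produces the stated upper bound.

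The main obstacle is the mismatch between the interval $[0, s_1^2/2]$ where strict convexity holds and the full positive axis: since $f_1''$ flips sign at $s_1^2/2$, part (ii) cannot rely on convexity alone and a value comparison is unavoidable. The specific threshold $(27r)^{-1}$ is tuned so that $hr$ comfortably undercuts the barrier $f_1(s_1^2/2) = 1/9$ and, via the upper bound of part (iii), also guarantees that the minimizer lies strictly inside $[0, s_1^2/2]$, which in turn validates the interior first-order condition used in (iii).
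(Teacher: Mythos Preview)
Your arguments for parts (i) and (iii) match the paper's exactly, including the same two one-line estimates for the lower and upper bounds in (iii). The genuine difference is in part (ii). The paper does \emph{not} use a value comparison; instead it shows directly that $T_h'(\alpha)\geq 0$ on $[s_1^2/2,\infty)$ via the chain of inequalities
\[
T_h'(\alpha)\;\geq\;\frac{2\alpha s_1^2}{(\alpha+s_1^2)^3}-\frac{2hr s_1^2}{\alpha^2}
\;=\;\frac{2s_1^2}{\alpha^2}\Bigl(\tfrac{\alpha^3}{(\alpha+s_1^2)^3}-hr\Bigr),
\]
and then observes that $\alpha^3/(\alpha+s_1^2)^3\geq 1/27$ for $\alpha\geq s_1^2/2$. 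Your value comparison $T_h(\alpha^*)\leq T_h(0)=hr\leq 1/27<1/9=f_1(s_1^2/2)<T_h(\alpha)$ for $\alpha>s_1^2/2$ is correct and in fact slightly cleaner: it avoids manipulating $T_h'$ altogether. On the other hand, the paper's derivative argument yields the stronger conclusion that $T_h$ is monotone on $[s_1^2/2,\infty)$, not merely that values there exceed the minimum; and it explains more transparently why the constant $1/27$ appears (it is the minimum of $\alpha^3/(\alpha+s_1^2)^3$ at the join point), whereas in your argument the gap between $1/27$ and $1/9$ looks somewhat incidental. So your remark that ``a value comparison is unavoidable'' is too strong.

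One small wrinkle in your closing paragraph: invoking the upper bound of (iii) to justify the interior first-order condition needed \emph{for} (iii) is circular as written. The clean way out is to note that, under the hypothesis of (ii), the minimizer is a global minimizer on the open set $(0,\infty)$, so $T_h'(\alpha^*)=0$ holds automatically regardless of whether $\alpha^*$ equals $s_1^2/2$. The paper glosses over this point as well.
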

\begin{proof}
Part (i) is already shown in Lemma \ref{lem:al}.

\medskip
\noindent Part (ii).
By Lemma \ref{lem:f}, the function $T_h(\alpha)$ is differentiable and strictly convex in
the interval $\alpha \in [0,s_1^2 / 2]$. Furhter, $T_h$ is increasing in the interval $\alpha \in (s_1^2 / 2,\infty)$ if $h<(27r)^{-1}$.
Indeed, in view of \eqref{eqn:deriv-Th},
\begin{eqnarray*}
  T_h'(\alpha) & \geq \frac{2\alpha s_1^2}{(\alpha+s_1^2)^3}-h\sum_{i=1}^r\frac{2s_1^2}{(\alpha+s_i^2)^2}\\
               & \geq \frac{2\alpha s_1^2}{(\alpha+s_1^2)^3}-\frac{2hrs_1^2}{\alpha^2}=\frac{2s_1^2}{\alpha^2}\Big(\frac{\alpha^3}{(\alpha+s_1^2)^3}-hr\Big).
\end{eqnarray*}
Clearly $\frac{\alpha^3}{(\alpha+s_1^2)^3}$ is an increasing in $\alpha$, and its minimum over $[s_1^2/2,+\infty)$
is achieved at $\alpha=s_1^2/2$ with a minimum value $1/27$. Therefore under the condition $h\leq (27r)^{-1}$, there
exists a unique global minimizer to $T_h(\alpha)$ in $[0,+\infty)$ and it is located within the interval $[0,s_1^2 / 2]$.
Moreover, as argued in Lemma \ref{lem:al} that if $h>0$, $\alpha=0$ cannot be a  minimizer. This shows part (ii).

\medskip
\noindent Part (iii). Clearly, by Lemma \ref{lem:al}, as $h$ tends to zero monotonically,
the minimizer $\alpha^*(h)$ also decreases monotonically. Further, the optimality
condition $\frac{d}{d\alpha} T_h(\alpha^*) = 0$ implies
\begin{equation}\label{eq:alpha-tikh}
\frac{\alpha^* s^2_1}{(s_1^2+\alpha^*)^3} \left( \sum_{i=1}^r \frac{s^4_i}{(s_i^2+\alpha^*)^3} \right)^{-1}  = h.
\end{equation}
Next we bound the quantity on the left hand side. Since $\sum_{i=1}^r \frac{s^4_i}{(s_i^2+\alpha^*)^3}\geq \frac{s_1^4}{(\alpha^*+s_1^2)^3}$, we deduce
\begin{equation*}
  h \leq \frac{\alpha^* s_1^2}{(s_1^2+\alpha^*)^3}\frac{(\alpha^*+s_1^2)^3}{s_1^4} = \frac{\alpha^*}{s_1^2}
\end{equation*}
This shows the first inequality. Meanwhile, from the inequality
\begin{equation*}
\sum_{i=1}^r \frac{s^4_i}{(s_i^2+\alpha^*)^3}\leq \sum_{i=1}^r\frac{s_i^2}{(\alpha^*)^2}=\frac{\mathrm{tr}(A^*A)}{(\alpha^*)^2},
\end{equation*}
we obtain
\begin{equation*}
   h \geq \frac{\alpha^* s_1^2}{(s_1^2+\alpha^*)^3}\frac{(\alpha^*)^2}{\mathrm{tr}(A^*A)}=\frac{s_1^2}{\mathrm{tr}(A^*A)}\frac{(\alpha^*)^3}{(s_1^2+\alpha^*)^3},
\end{equation*}
i.e., $\frac{\alpha^*}{\alpha^*+s_1^2} \leq (\mathrm{tr}(A^*A)h/s_1^2)^\frac{1}{3}$. Solving for the inequality gives the assertion in part (iii).
\end{proof}

\begin{remark}
According to Theorem \ref{lemma}, the function $T_h$ always has a finite positive and unique minimizer within the interval $[0,s_1^2/2]$,
which is also the unique global minimizer over $[0,\infty)$ if the SNR is sufficiently large. In practice, minimizing over $[0,s_1^2/2]$
is sufficient for Tikhonov regularization. Since the function $T_h(\alpha)$ is strictly convex in $\alpha\in [0,s_1^2/2]$, in principle,
 its minimization is numerically tractable. For example, one may apply Newton type methods, which is
guaranteed to converge globally \cite{Polyak:2018}. Thus it is numerically more amenable than choice rules based
on predictive risk, e.g., UPRE and GCV, which are known to suffer from (bad) local minima as well as flatness near global minima;
see Fig. \ref{motivation}.
\end{remark}

Next we turn to the upper bound property of $\alpha^*$.
\begin{lemma}\label{lem:al0}
For any $\mathbf{g}^\dag$, there exists an $\alpha_0>0$, such that there holds for all $\alpha\leq \alpha_0$
\begin{equation*}
  \sum_{i=1}^r \frac{2\alpha s_i^2}{(s_i^2+\alpha)^3}(\mathbf{g}^\dag,\mathbf{u}_i)^2 \geq \frac{2\alpha s_1^2}{(s_1^2+\alpha)^3}\|\mathbf{g}^\dag\|^2,
\end{equation*}
where $\mathbf{u}_i$ denotes the $i$th left singular vector of the matrix $A$.
\end{lemma}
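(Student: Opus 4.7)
My plan is to reduce the claim to a termwise inequality in the index $i$, and then establish that termwise inequality by analyzing the monotonicity of a single scalar function.

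First, since $\mathbf{g}^\dag = A\mathbf{f}^\dag$ lies in the range of $A$, which is spanned by the left singular vectors $\mathbf{u}_1,\ldots,\mathbf{u}_r$ corresponding to the positive singular values, Parseval's identity yields
\[
\|\mathbf{g}^\dag\|^2 = \sum_{i=1}^{r} (\mathbf{g}^\dag,\mathbf{u}_i)^2.
\]
After cancelling the common factor $2\alpha$ (for $\alpha = 0$ both sides are trivially zero, so assume $\alpha > 0$), it therefore suffices to prove the pointwise bound
\[
\frac{s_i^2}{(s_i^2+\alpha)^3} \;\geq\; \frac{s_1^2}{(s_1^2+\alpha)^3}, \qquad i = 1,\ldots,r,
\]
since multiplying by the non-negative weight $(\mathbf{g}^\dag,\mathbf{u}_i)^2$ and summing then recovers the claim.

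For the pointwise bound I would regard $g(s) := s^2 (s^2+\alpha)^{-3}$ as a function of the continuous variable $s > 0$. A direct differentiation gives
\[
g'(s) = \frac{2s(\alpha - 2s^2)}{(s^2+\alpha)^4},
\]
so $g$ is strictly decreasing on $(\sqrt{\alpha/2},\,\infty)$. Choosing $\alpha_0 := 2 s_r^2 > 0$ (which is positive because $s_r$ is by definition the smallest strictly positive singular value of $A$), every $\alpha \leq \alpha_0$ satisfies $\sqrt{\alpha/2} \leq s_r \leq s_i \leq s_1$ for $i=1,\ldots,r$. Thus $s_i$ and $s_1$ both lie in the monotone regime of $g$, and $s_i \leq s_1$ gives $g(s_i) \geq g(s_1)$, which is exactly the required inequality.

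The only mildly delicate point is choosing a concrete threshold $\alpha_0$ that keeps every positive singular value of $A$ inside the decreasing region of $g$; the natural choice $\alpha_0 = 2s_r^2$ works since the lemma only asserts existence of some $\alpha_0$. A threshold independent of the spectral gap $s_r$ would require a finer analysis that redistributes the weights $(\mathbf{g}^\dag,\mathbf{u}_i)^2$ rather than bounding them termwise, but that is unnecessary here.
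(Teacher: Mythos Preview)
Your proof is correct and follows essentially the same route as the paper: both expand $\|\mathbf{g}^\dag\|^2=\sum_{i=1}^r(\mathbf{g}^\dag,\mathbf{u}_i)^2$ via Parseval (using $\mathbf{g}^\dag\in\mathrm{range}(A)$) and then reduce to the termwise inequality $s_i^2/(s_i^2+\alpha)^3\ge s_1^2/(s_1^2+\alpha)^3$. The only difference is in verifying that inequality---the paper manipulates it algebraically to obtain the threshold $\alpha_0=s_1^{4/3}s_r^{2/3}$, whereas your monotonicity argument for $g(s)=s^2/(s^2+\alpha)^3$ yields the smaller but equally sufficient threshold $\alpha_0=2s_r^2$.
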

\begin{proof}
By the identity $\sum_{i=1}^r (\mathbf{g}^\dag,\mathbf{u}_i)^2
=\|\mathbf{g}^\dag\|^2=\rho^2$, the assertion is equivalent to
\begin{equation*}
  \sum_{i=1}^r \frac{2\alpha s_i^2}{(s_i^2+\alpha)^3}(\mathbf{g}^\dag,\mathbf{u}_i)^2 \geq \sum_{i=1}^r \frac{2\alpha s_1^2}{(s_1^2+\alpha)^3}(\mathbf{g}^\dag,\mathbf{u}_i)^2.
\end{equation*}
Now we claim that for all sufficiently small $\alpha$, there holds
\begin{equation*}
   \frac{s_i^2}{(s_i^2+\alpha)^3} \geq \frac{s_1^2}{(s_1^2+\alpha)^3},\quad\mbox{i.e.}\quad
  \frac{s_1^2+\alpha}{s_i^2+\alpha} \leq \Big(\frac{s_1^2}{s_i^2}\Big)^\frac13,
\end{equation*}
which, with $\lambda=(\frac{s_1^2}{s_i^2})^\frac13$, is equivalent to
$
\alpha \leq s_1^2\frac{1-\lambda^{-2}}{\lambda-1}=s_1^2\lambda^{-1}(1+\lambda^{-1}).
$
Thus, it suffices to choose $\alpha_0=s_1^\frac{4}{3}s_n^{\frac{2}{3}}$.
\end{proof}
\begin{remark}
The bound $\alpha_0$ given in Lemma \ref{lem:al0} is very loose. In practice, the coefficients $(\mathbf{g}^\dag,\mathbf{u}_i)^2$
can decay very rapidly to zero as $i$ increases, especially for severely ill-posed problems, due to smoothing property of $A$
(and regularity condition on $\mathbf{f}^\dag$, e.g., source conditions \cite{Engl,ItoJin:2015}). Then, the
first few terms in the summation are dominating, and one expects a much larger bound. In practice, it is often
$O(1)$.
\end{remark}
\begin{proposition}\label{prop:upper}
If the minimizer $\alpha^*$ to the function $T_h$ is sufficiently small, then it is an upper bound of a local minimizer
to the predictive risk $p_\alpha(\mathbf{g}^\eta)$.
\end{proposition}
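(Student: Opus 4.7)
The plan is to compare the first-order behavior of $p_\alpha(\mathbf{g}^\eta)$ with that of $T_h(\alpha)$ at $\alpha^\ast$, exploiting the fact that the two functionals differ \emph{only} in the bias term while sharing the same variance term. So if I subtract their derivatives, the variance part cancels and I am left with a residual controlled by Lemma~\ref{lem:al0}.

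First I would expand the true bias term via the SVD of $A$: since $\mathbf{g}^\dag = A\mathbf{f}^\dag\in\mathrm{range}(A)$, one has $(\mathbf{g}^\dag,\mathbf{u}_i)=0$ for $i>r$, and
\begin{equation*}
\|(X_\alpha-I)\mathbf{g}^\dag\|^2 = \sum_{i=1}^r\frac{\alpha^2}{(s_i^2+\alpha)^2}(\mathbf{g}^\dag,\mathbf{u}_i)^2,
\qquad \sigma^2\|X_\alpha\|_F^2 = \sigma^2\sum_{i=1}^r\frac{s_i^4}{(s_i^2+\alpha)^2},
\end{equation*}
so that the variance parts of $p_\alpha$ and of $T_{(\rho^2,\sigma^2)}$ in \eqref{eq:tm} coincide. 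Differentiating both expressions yields, after the same computation as in \eqref{eqn:deriv-Th},
\begin{equation*}
p_\alpha'(\mathbf{g}^\eta) = \sum_{i=1}^r\frac{2\alpha s_i^2}{(s_i^2+\alpha)^3}(\mathbf{g}^\dag,\mathbf{u}_i)^2 - \sigma^2\sum_{i=1}^r\frac{2s_i^4}{(s_i^2+\alpha)^3}.
\end{equation*}

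Next I would invoke the first-order optimality condition $T_h'(\alpha^\ast)=0$ (justified by Lemma~\ref{lem:al} and Proposition~\ref{lemma}(i)) to eliminate the variance sum at $\alpha^\ast$, giving
\begin{equation*}
\sigma^2\sum_{i=1}^r\frac{2s_i^4}{(s_i^2+\alpha^\ast)^3} = \rho^2\,\frac{2\alpha^\ast s_1^2}{(s_1^2+\alpha^\ast)^3}.
\end{equation*}
Substituting this into the expression for $p_\alpha'(\mathbf{g}^\eta)$ at $\alpha=\alpha^\ast$ yields exactly the difference studied in Lemma~\ref{lem:al0}:
\begin{equation*}
p_\alpha'(\mathbf{g}^\eta)\big|_{\alpha=\alpha^\ast} = \sum_{i=1}^r\frac{2\alpha^\ast s_i^2}{(s_i^2+\alpha^\ast)^3}(\mathbf{g}^\dag,\mathbf{u}_i)^2 - \frac{2\alpha^\ast s_1^2}{(s_1^2+\alpha^\ast)^3}\rho^2.
\end{equation*}
Hence, provided the hypothesis ``$\alpha^\ast$ sufficiently small'' is interpreted as $\alpha^\ast\le\alpha_0$ with $\alpha_0$ from Lemma~\ref{lem:al0}, I obtain $p_\alpha'(\mathbf{g}^\eta)|_{\alpha=\alpha^\ast}\ge 0$.

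Finally I would note that, just as in the proof of Lemma~\ref{lem:al}, $p_\alpha'(\mathbf{g}^\eta)\to -\sigma^2\sum_{i=1}^r 2s_i^{-2}<0$ as $\alpha\to 0^+$. Since $p_\alpha$ is smooth on $(0,\infty)$ and $p_\alpha'$ changes sign on $(0,\alpha^\ast]$, the intermediate value theorem produces some $\bar\alpha\in(0,\alpha^\ast]$ with $p_{\bar\alpha}'=0$; taking the smallest such root gives a sign change from negative to nonnegative, i.e.\ a local minimizer of the predictive risk lying in $(0,\alpha^\ast]$, which is precisely the claimed upper bound property. The main obstacle is conceptual rather than computational: ensuring that ``sufficiently small'' is made precise through $\alpha_0$ in Lemma~\ref{lem:al0}, and that the argument returns a genuine local minimizer (a sign change) rather than merely a critical point—this is why it is essential to exploit the behavior $p_\alpha'(\mathbf{g}^\eta)<0$ near $0$ and pick the smallest root.
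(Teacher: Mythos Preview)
Your proposal is correct and follows essentially the same approach as the paper: compute $p_\alpha'(\mathbf{g}^\eta)$ via the SVD, combine the optimality condition $T_h'(\alpha^\ast)=0$ with Lemma~\ref{lem:al0} to obtain $p_\alpha'(\mathbf{g}^\eta)|_{\alpha=\alpha^\ast}\ge 0$, and then argue that a local minimizer of $p_\alpha$ lies in $(0,\alpha^\ast]$. If anything, you are more explicit than the paper, which leaves the substitution via the optimality condition implicit and replaces your IVT-and-sign-at-zero argument with the terser phrase ``monotonicity of the sums in $p_\alpha(\mathbf{g}^\eta)$''.
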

\begin{proof}
Straightforward computation shows that the predictive risk $p_\alpha(\mathbf{g}^\eta)$ is given by
\begin{equation*}
   p_\alpha(\mathbf{g}^\eta) = \sum_{i=1}^r \frac{\alpha^2}{(s_i^2+\alpha)^2}(\mathbf{g}^\dag,\mathbf{u}_i)^2 + \sigma^2\sum_{i=1}^r \frac{s_i^4}{(s_i^2+\alpha)^2},
\end{equation*}
It suffices to show that $\frac{d}{d\alpha}p_\alpha(\mathbf{g}^\eta)|_{\alpha=\alpha^*}>0$. Note that $\frac{d}{d\alpha}p_\alpha(\mathbf{g}^\eta)|_{\alpha=\alpha^*}$ is given by
\begin{equation*}
  \sum_{i=1}^r \frac{2\alpha^* s_i^2}{(s_i^2+\alpha^*)^3}(\mathbf{g}^\dag,\mathbf{u}_i)^2 - \sigma^2\sum_{i=1}^r \frac{2s_i^4}{(s_i^2+\alpha^*)^3}.
\end{equation*}
Since the minimizer $\alpha^*$ of $T_{(\rho^2,\sigma^2)}$ is sufficiently small,
by Lemma \ref{lem:al0}, $\frac{d}{d\alpha}p_\alpha(\mathbf{g}^\eta)|_{\alpha=\alpha^*}>0$.
This and the monotonicity of the sums in $p_\alpha(\mathbf{g}^\eta)$ complete the proof.
\end{proof}

\subsection{Convergence of the iterative scheme}

Now we analyze the convergence of the \texttt{I-PRO} algorithm for Tikhonov regularization. First, by viewing the
estimates $\rho$ and $\sigma$ as functions of $\alpha$, we have the following monotonicity result.

\begin{lemma}\label{lem:mon-sig}
The functions $\rho^2(\alpha)$ and $\sigma^2(\alpha)$ are monotonically decreasing and increasing, respectively, in $\alpha$.
\end{lemma}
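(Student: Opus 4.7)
The plan is to reduce both monotonicity statements to a single statement about the residual norm $\|\mathbf{r}^\eta_\alpha\|^2$. By the definitions \eqref{eq:se} and \eqref{eq:le} in Algorithm \ref{alg:ipro},
\[
\sigma^2(\alpha)=\tfrac{1}{n}\|\mathbf{r}^\eta_\alpha\|^2 \quad\text{and}\quad \rho^2(\alpha)=\|\mathbf{g}^\eta\|^2-\|\mathbf{r}^\eta_\alpha\|^2,
\]
so it is enough to prove that $\alpha\mapsto \|\mathbf{r}^\eta_\alpha\|^2$ is monotonically increasing; then the $\sigma^2$ claim is immediate and the $\rho^2$ claim follows since $\|\mathbf{g}^\eta\|^2$ does not depend on $\alpha$.

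The key step is to diagonalize the residual via the SVD. Writing $A=U\Sigma V^*$ with left singular vectors $\mathbf{u}_i$, the Tikhonov influence matrix in Section \ref{sec:Tikh} gives $\mathbf{r}^\eta_\alpha=(X_\alpha-I)\mathbf{g}^\eta$, and from \eqref{eq:op_norm} the singular values of $I-X_\alpha$ on $\mathrm{range}(A)$ equal $\alpha/(s_i^2+\alpha)$ while they equal $1$ on $\ker(A^*)$. Consequently,
\[
\|\mathbf{r}^\eta_\alpha\|^2 = \sum_{i=1}^r \frac{\alpha^2}{(s_i^2+\alpha)^2}\,(\mathbf{g}^\eta,\mathbf{u}_i)^2 + \sum_{i=r+1}^n (\mathbf{g}^\eta,\mathbf{u}_i)^2.
\]
The tail sum is independent of $\alpha$, so monotonicity reduces to monotonicity of each coefficient $\alpha^2/(s_i^2+\alpha)^2$, $i=1,\ldots,r$.

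For the final step, I would differentiate $\alpha/(s_i^2+\alpha)$ to obtain $s_i^2/(s_i^2+\alpha)^2>0$, which shows that $\alpha\mapsto \alpha/(s_i^2+\alpha)$ is strictly increasing on $[0,\infty)$ for every $s_i>0$; squaring preserves monotonicity since the quantity is non-negative. Therefore $\|\mathbf{r}^\eta_\alpha\|^2$ is a sum of non-decreasing functions (strictly increasing whenever $(\mathbf{g}^\eta,\mathbf{u}_i)\neq 0$ for some $i\le r$), which yields the two claimed monotonicities. There is no real obstacle here; the only thing to be careful about is separating the $\mathrm{range}(A)$ and $\ker(A^*)$ contributions to the residual so that the argument uses only the Tikhonov-specific spectral form \eqref{eq:op_norm}.
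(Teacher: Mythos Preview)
Your proof is correct, but it takes a different route from the paper. You diagonalize the residual via the SVD of $A$ and then show directly that each spectral coefficient $\alpha^2/(s_i^2+\alpha)^2$ is increasing in $\alpha$. The paper instead uses a purely variational argument: it invokes the minimizing property of $\mathbf{f}^\eta_\alpha$ for the Tikhonov functional $\|A\mathbf{f}-\mathbf{g}^\eta\|^2+\alpha\|\mathbf{f}\|^2$ at two parameter values $\alpha_0,\alpha_1$, and from the pair of resulting inequalities deduces that $\|A\mathbf{f}^\eta_\alpha-\mathbf{g}^\eta\|$ is monotone in $\alpha$, without ever computing singular values. Your approach is more explicit and gives strict monotonicity for free whenever $(\mathbf{g}^\eta,\mathbf{u}_i)\neq 0$ for some $i\le r$; the paper's argument is more robust in that it extends verbatim to general quadratic penalties $\alpha\|L\mathbf{f}\|^2$ (or indeed any convex penalty scaled by $\alpha$), where an SVD diagonalization is not available.
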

\begin{proof}
The variance estimate $\sigma^2(\alpha)$ is proportional to $\| A\mathbf{f}^\eta_\alpha-\mathbf{g}^\eta\|^2$. For any $\alpha_0,\alpha_1>0$,
by the minimizing property of $\mathbf{f}^\eta_{\alpha_0}$ and $\mathbf{f}^\eta_{\alpha_1}$, i.e.,
\begin{eqnarray*}
   \|A\mathbf{f}^\eta_{\alpha_0}-\mathbf{g}^\eta\|^2 + \alpha_0\|\mathbf{f}_{\alpha_0}^\eta\|^2 & \leq \|A\mathbf{f}^\eta_{\alpha_1}-\mathbf{g}^\eta\|^2 + \alpha_0\|\mathbf{f}_{\alpha_1}^\eta\|^2,\\
   \|A\mathbf{f}^\eta_{\alpha_1}-\mathbf{g}^\eta\|^2 + \alpha_1\|\mathbf{f}_{\alpha_1}^\eta\|^2 & \leq \|A\mathbf{f}^\eta_{\alpha_0}-\mathbf{g}^\eta\|^2 + \alpha_1\|\mathbf{f}_{\alpha_0}^\eta\|^2,
\end{eqnarray*}
we deduce that the residual $\|A\mathbf{f}^\eta_\alpha-\mathbf{g}^\eta\| $ is monotonically increasing in
$\alpha$. Then the assertion follows immediately.
\end{proof}

Now we can prove that the sequence $\{\alpha_k\}_{k=1}^\infty$ of regularization parameters generated by
the \texttt{I-PRO} algorithm is actually monotone. Once it is bounded, the result provides a constructive
proof of the existence of a fixed point.
\begin{theorem}
For any $\alpha_0\in[0,s_1^2/2]$, the sequence $\{\alpha_k\}$ generated by the {\tt I-PRO} algorithm is monotone.
\end{theorem}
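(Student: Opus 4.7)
The plan is to reduce monotonicity of the sequence $\{\alpha_k\}$ to monotonicity of the single-variable iteration map
\[
  F(\alpha) := \arg\min_{\beta\in[0,s_1^2/2]} T_{(\rho^2(\alpha),\sigma^2(\alpha))}(\beta),
\]
so that $\alpha_{k+1} = F(\alpha_k)$. Once $F$ is shown to be monotonically increasing in $\alpha$, monotonicity of $\{\alpha_k\}$ follows at once by induction: if $\alpha_{k+1}\geq \alpha_k$, applying $F$ to both sides yields $\alpha_{k+2} = F(\alpha_{k+1}) \geq F(\alpha_k) = \alpha_{k+1}$; the decreasing case is symmetric. The sign of $\alpha_2-\alpha_1$ thus fixes the direction of monotonicity from then on.

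The first step is to observe from the explicit form \eqref{eq:tm} that the minimizer of $T_{(\rho^2,\sigma^2)}$ depends only on the ratio $h := \sigma^2/\rho^2$, since rescaling the functional by $\rho^2$ does not change its argmin. Hence $F(\alpha) = \alpha^*(h(\alpha))$, where $h(\alpha) := \sigma^2(\alpha)/\rho^2(\alpha)$ and $\alpha^*(h)$ is the unique minimizer of $T_h$ on $[0,s_1^2/2]$ guaranteed by Proposition \ref{lemma}(i).

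The second step combines two previously established monotonicity facts. By Lemma \ref{lem:mon-sig}, $\sigma^2(\alpha)$ is monotonically increasing and $\rho^2(\alpha)$ is monotonically decreasing in $\alpha$, so $h(\alpha)$ is strictly monotonically increasing on the admissible range. By Lemma \ref{lem:al}, $\alpha^*(h)$ is strictly monotonically increasing in $h$. Composing the two, $F = \alpha^* \circ h$ is strictly monotonically increasing, and the induction above closes the argument.

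The main obstacle is only a bookkeeping one: I need to verify that the iteration is well-posed throughout, i.e.\ that each $\alpha_k$ lies in $[0,s_1^2/2]$ so that Lemmas \ref{lem:mon-sig} and \ref{lem:al} apply, and that $\rho^2(\alpha_k) = \|\mathbf{g}^\eta\|^2 - \|\mathbf{r}^\eta_{\alpha_k}\|^2$ stays strictly positive so that $h(\alpha_k)$ is defined. Membership in $[0,s_1^2/2]$ follows from Proposition \ref{lemma}(i) applied at each iterate, and positivity of $\rho^2(\alpha_k)$ holds away from the trivial regime where the regularized residual already saturates the data norm. Beyond this technicality, the proof is a clean chaining of the two monotonicity results.
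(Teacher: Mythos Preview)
Your proof is correct and follows essentially the same route as the paper: define $h(\alpha)=\sigma^2(\alpha)/\rho^2(\alpha)$, invoke Lemma~\ref{lem:mon-sig} to get $h$ increasing in $\alpha$, invoke Lemma~\ref{lem:al} to get $\alpha^*$ increasing in $h$, and compose. Your version is in fact slightly more careful than the paper's, since you make the iteration map $F=\alpha^*\circ h$ explicit, spell out the induction step, and flag the well-posedness caveats (membership in $[0,s_1^2/2]$ and positivity of $\rho^2(\alpha_k)$) that the paper passes over in silence.
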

\begin{proof}
If $\alpha_0<\alpha_1$, then by the monotonicity in Lemma \ref{lem:mon-sig}, $h(\alpha)=\sigma^2(\alpha)/\rho^2(\alpha)$ is
monotonically increasing, and thus $h(\alpha_0)<h(\alpha_1)$. Then by Lemma \ref{lem:al}, $\alpha^*_{h(\alpha_0)}
<\alpha^*_{h(\alpha_1)}$, i.e., $\alpha_1<\alpha_2$. The case $\alpha_0>\alpha_1$ follows similarly. This
shows the monotonicity of the iterates $\{\alpha_k\}_{k=1}^\infty$.
\end{proof}

Note that by the optimality condition to $T_h$, at the fixed point $\alpha^*$ satisfies
\begin{equation}\label{eqn:fixed-point}
   \frac{\alpha s_1^2}{(\alpha+s_1)^3} - \frac{\sigma^2(\alpha)}{\rho^2(\alpha)}\sum_{i=1}^r\frac{s_i^4}{(\alpha+s_i^2)^3} = 0.
\end{equation}
Note that this equation can be rewritten into a (high-degree) polynomial in $\alpha$, and thus the existence
of a root is ensured. Further, by Lemma \ref{lem:mon-sig}, $\|A\mathbf{f}_\alpha^\eta-\mathbf{g}^\eta\|$ decreases monotonically to the least-squares
residual as $\alpha$ tends to zero, and thus the limit $\lim_{\alpha\to0^+}\|A\mathbf{f}_\alpha^\eta-\mathbf{g}^\eta\|>0$ makes sense.
\begin{proposition}\label{prop:ipro}
The following statements hold for the {\tt I-PRO} algorithm.
\begin{itemize}
 \item[(i)]If $\lim_{\alpha\to0^+}\|A\mathbf{f}_\alpha^\eta - \mathbf{g}^\eta\|>0$, then $0$ is not a fixed point of the {\tt I-PRO} algorithm.
 \item[(ii)] If the estimate $\sigma^2(\alpha^*)/\rho(\alpha^*)$ is smaller (greater) than the exact one, then {\tt I-PRO} estimate $\alpha^*$ is smaller (greater) than the {\tt PRO} estimate.
\end{itemize}
\end{proposition}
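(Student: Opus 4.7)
The plan is to attack the two assertions of Proposition \ref{prop:ipro} separately, with both reducing to results already established for $T_h$. The key ingredients are the strict monotonicity of the minimizer $\alpha^*(h)$ in $h$ (Lemma \ref{lem:al}) and the observation, already recorded in the proof of Lemma \ref{lem:al}, that $\alpha=0$ is never a minimizer of $T_h$ when $h>0$, since $\lim_{\alpha\to 0^+}T_h'(\alpha)=-2h\sum_{i=1}^{r}s_i^{-2}<0$.

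For part (i), I would argue by contradiction: suppose $\alpha=0$ is a fixed point of {\tt I-PRO}. Under the hypothesis $\lim_{\alpha\to 0^+}\|A\mathbf{f}_\alpha^\eta-\mathbf{g}^\eta\|>0$, the update formula \eqref{eq:se} gives $\sigma^2(0)>0$, while \eqref{eq:le} gives $\rho^2(0)=\|\mathbf{g}^\eta\|^2-n\sigma^2(0)$, which is generically positive (the data norm exceeds the least-squares residual). Hence $h(0)=\sigma^2(0)/\rho^2(0)>0$. By the fixed-point definition, $0=\arg\min_\alpha T_{h(0)}(\alpha)$, which in view of the optimality condition \eqref{eqn:fixed-point} requires $T'_{h(0)}(0)=0$ (or at least nonnegative). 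But the right-derivative limit above is strictly negative, a contradiction. Thus $0$ cannot be a fixed point.

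For part (ii), the argument is an immediate application of Lemma \ref{lem:al}. By construction, the {\tt I-PRO} estimate $\alpha^*$ is a fixed point of the iteration, so it minimizes $T_{\hat h}$ where $\hat h:=\sigma^2(\alpha^*)/\rho^2(\alpha^*)$ is the estimated ratio evaluated at the fixed point, while the {\tt PRO} estimate $\alpha^*_{\mathrm{PRO}}$ minimizes $T_{h^\ast}$ with $h^\ast=\sigma^2/\rho^2$ the exact ratio. Since $h\mapsto \alpha^*(h)$ is strictly monotonically increasing by Lemma \ref{lem:al}, $\hat h<h^\ast$ forces $\alpha^*=\alpha^*(\hat h)<\alpha^*(h^\ast)=\alpha^*_{\mathrm{PRO}}$, and the reverse inequality follows symmetrically.

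I do not expect a genuine obstacle here; both parts are short corollaries of earlier monotonicity results. The only point requiring a little care is the side remark in part (i) that $\rho^2(0)>0$ so that $h(0)$ is well defined, which is a mild condition on the data and is consistent with the discussion preceding the statement of the proposition. Everything else reduces to quoting Lemma \ref{lem:al} and the boundary computation of $T'_h$ at $\alpha=0^+$ already available from its proof.
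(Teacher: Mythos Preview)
Your proposal is correct and follows essentially the same approach as the paper: part (i) by contradiction using the fixed-point/optimality equation \eqref{eqn:fixed-point} (whose left-hand side is nonzero at $\alpha=0$ whenever $h(0)>0$), and part (ii) directly from the strict monotonicity of $\alpha^*(h)$ in Lemma~\ref{lem:al}. The paper's own proof is a one-line sketch invoking exactly these two ingredients, so you have simply unpacked the details.
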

\begin{proof}
Assertion (i) is direct from \eqref{eqn:fixed-point} and an argument by contradiction, and (ii) follows from Lemma \ref{lem:al}.
\end{proof}

Proposition \ref{prop:ipro}(ii) suggests that one may monitor the quantity $\sigma^2(\alpha_k)/\rho^2(\alpha_k)$
during the iteration as an \textit{a posteriori} check: if it is deemed to be too small, then the iteration should be terminated.

\subsection{SVD-free implementation}\label{section:svd-free}

For large-scale problems, SVD is usually too costly to apply (randomized SVD may be applied instead, when the problem has low-rank structure
\cite{ItoJin:2019}). Instead we implement \texttt{PRO} and \texttt{I-PRO} as follows. It follows from \eqref{eq:op_norm} that
the first term in the function $T_{(\rho^2,\sigma^2)}$ depends on the smallest eigenvalue of the influence matrix $I-X_\alpha$, or the
largest eigenvalue $\lambda_1(A^*A)$ of $A^*A$. Thus, we employ classical algorithms for computing $\lambda_1(A^*A)$, e.g.,
power method, and take
\begin{equation}\label{eq:op_norm_large_scale}
  s_n(X_\alpha - I )^2 = \left(\frac{\alpha}{\lambda_1(A^*A) + \alpha}\right)^2.
\end{equation}
To compute the term $\|X_\alpha\|_F^2$, we employ a randomized trace estimator \cite{hutchinson1990stochastic}:
\begin{eqnarray*}
\|X_\alpha\|^2_{F} &= {\rm tr}( X^*_\alpha X_\alpha ) =  \mathbb{E}_{\mathbf{z}}  [\| X_\alpha \mathbf{z} \|^2] \\
   &\simeq \sum_{i=1}^{p}  \|A (A^*A + \alpha I)^{-1} A^*\mathbf{z}_{(i)} \|^2,
\end{eqnarray*}
where $\mathbf{z}$ follows the standard Gaussian distribution and $\{\mathbf{z}_{(i)}\}_{i=1}^p$
are $p$ i.i.d. samples of $Z$ (other choices are also possible). Each summand involves solving one
linear system: $(A^*A + \alpha I)\mathbf{w}_{(i)} =  A^* \mathbf{z}_{(i)}.$ Once $\mathbf{w}_{(i)}$ are
obtained, we have
\begin{equation}\label{eq:fr_norm_large_scale}
\|X_\alpha\|^2_{F}  \simeq \sum_{i=1}^{p}  \|A\mathbf{w}_{(i)} \|^2.
\end{equation}
In practice, only a few samples are required for an accurate estimation (see \cite{Avron:2011} for relevant
error bounds on randomized trace estimators). The randomized trace estimation is widely used in implementing GCV. The difference lies in the fact
that for \texttt{PRO}, it is applied to the matrix $X_\alpha^*X_\alpha$, instead of $X_\alpha$ in GCV.

\section{Numerical simulations and discussions}\label{sec:numer}
Now we present two sets of simulation studies with synthetic data from popular public software
packages, i.e., Regularization tools and AIR tools. The first aims to show the
robustness of the proposed method for problems with small and moderately sized samples, and
the second to show its performance on large-scale problems.

\subsection{Small and moderately sized samples}
Below we compare the Tikhonov solutions by \texttt{PRO} and \texttt{I-PRO} with that by existing choice rules
in two scenarios: (i) \texttt{PRO} versus DP, BP and UPRE, if $\sigma$ is known; (ii) \texttt{I-PRO}
versus LC, GCV and QOC, if $\sigma$ is unknown. DP is implemented by solving for $\alpha$ in
\begin{equation*}
  \|\mathbf{A}\mathbf{f}_\alpha^\eta-\mathbf{g}^\eta\| = \sqrt{n}\sigma .
\end{equation*}
The implementation of BP follows \cite[equation (1.7)]{HamarikRuas:2009} with the constants $\gamma=1/4$ and $c$ given in \cite[Table 1,
p. 961]{HamarikRuas:2009}. All the rules are computed on an equally distributed grid on a logarithmical scale.
We evaluate them on eight inverse problems (i.e., \textit{baart}, \textit{deriv2}, \textit{foxgood}, \textit{gravity}, \textit{heat},
\textit{i\_laplace}, \textit{phillips}, \textit{shaw}) from the public package Regularization tools (downloaded from
\url{http://people.compute.dtu.dk/pcha/Regutools/index.html}, accessed on July 1, 2019). They are all
one-dimensional linear integral equations of first kind, with different degree of ill-posedness. Each problem refers to
one example, except \textit{heat} and \textit{i\_laplace}. The \textit{heat} family depends on one parameter controlling its conditioning,
and we choose the values 1 and 5 to simulate mildly ill-posed and almost well-posed scenarios, respectively. The
\textit{i\_laplace} family has four cases, of which the first three cases have smooth solutions and thus standard Tikhonov
regularization is suitable. This gives rise to a total of eleven examples. For each example, we take $100$ independent
and identically distributed (i.i.d.) realizations $\{\boldsymbol{\eta}_i\}_{i=1}^{100}$ of the Gaussian noise
with mean zero and covariance $\sigma^2I$, seeded by \textsc{MATLAB} function {\tt rng(i)} with $i=1,...,100$,
leading to 100 replicates of noisy data $\mathbf{g}^{\eta_i}\sim \mathcal N(\mathbf{g},\sigma^2 I)$; see Fig.
\ref{synthetic-data} for typical noisy data for \textit{baart}.

\begin{figure}
\begin{tabular}{ccc}
\includegraphics[width=0.3\textwidth, trim=48 30 48 30,clip]{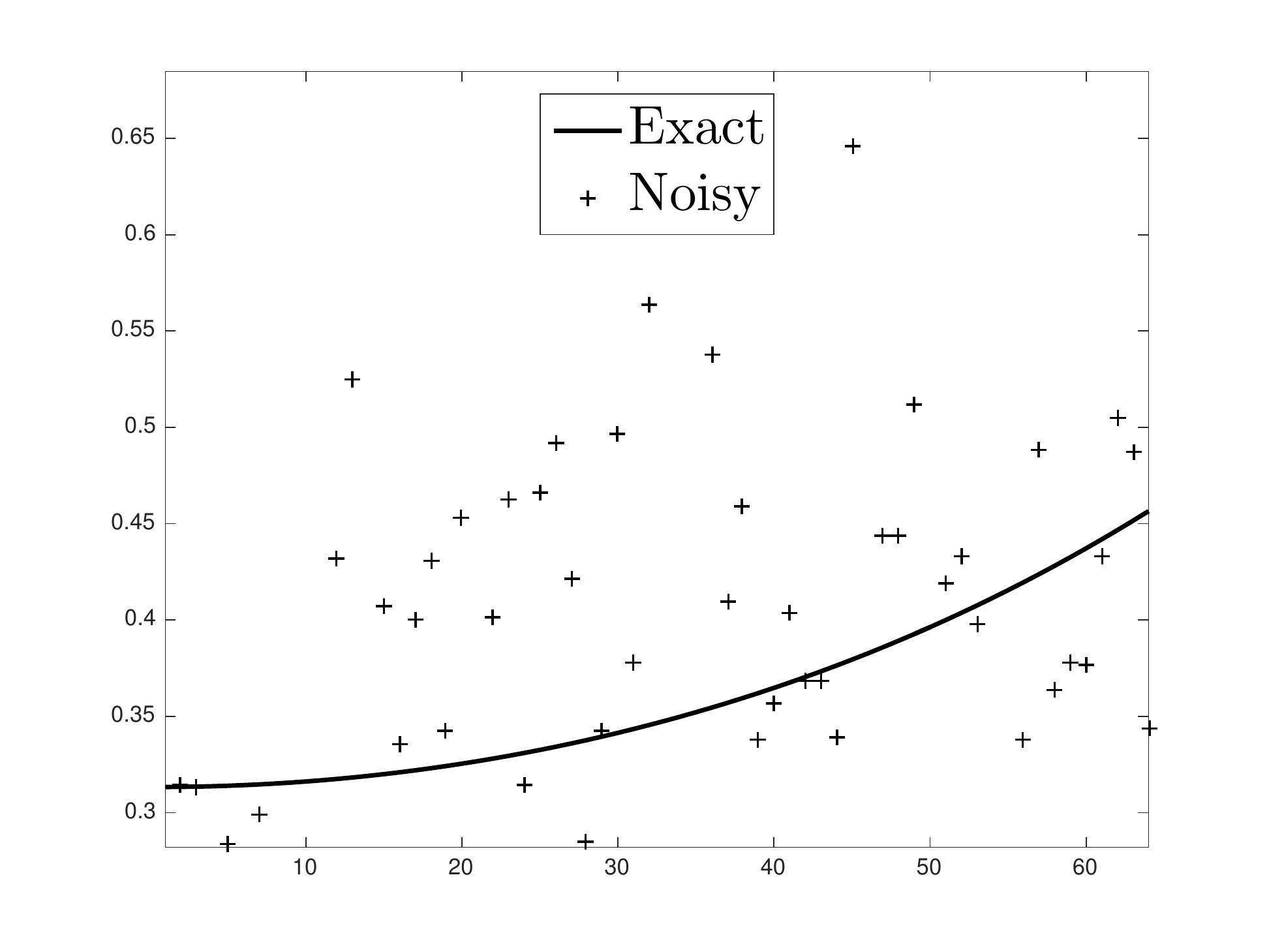}&
\includegraphics[width=0.3\textwidth, trim=48 30 48 30,clip]{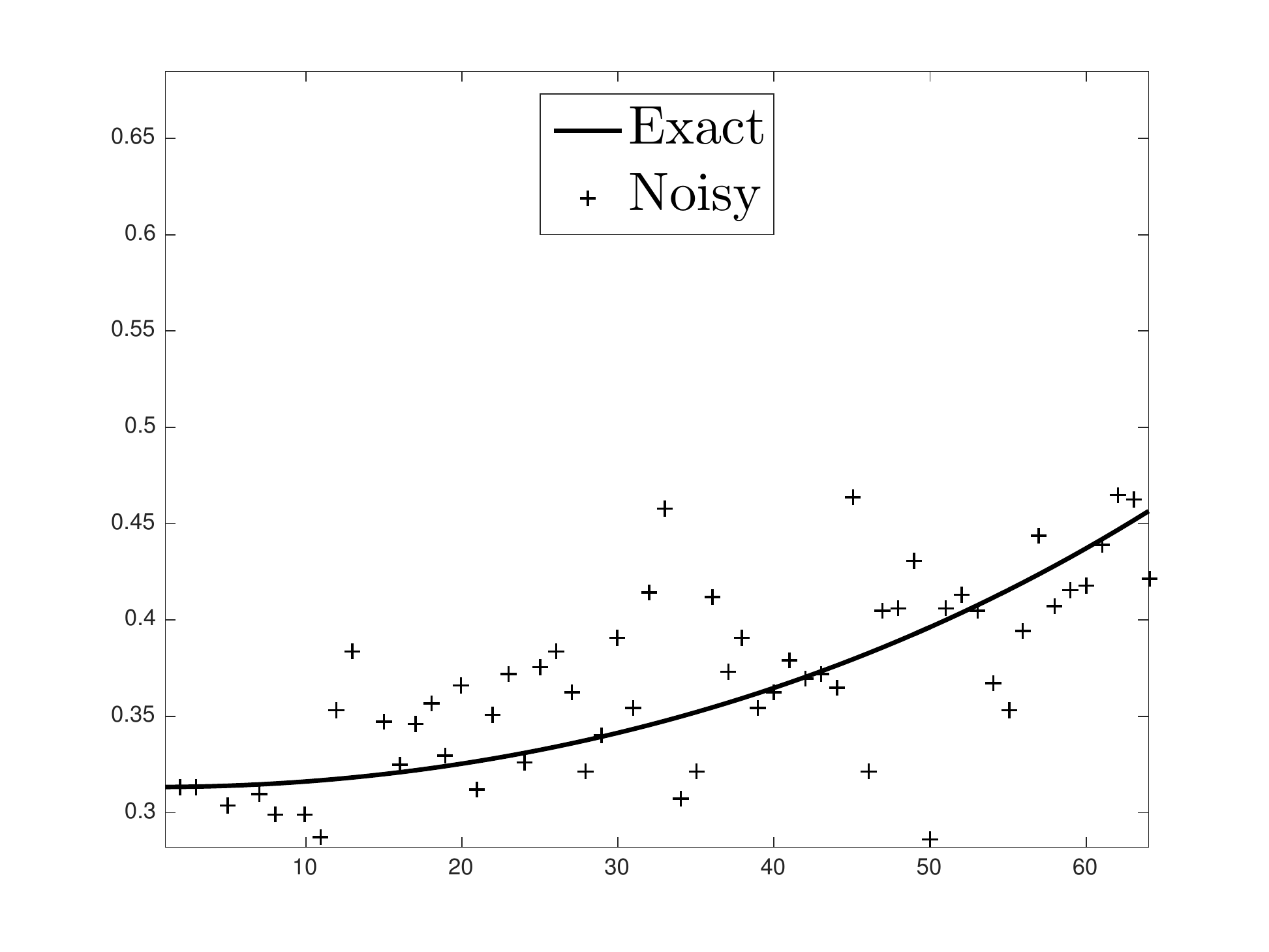}&
\includegraphics[width=0.3\textwidth, trim=48 30 48 30,clip]{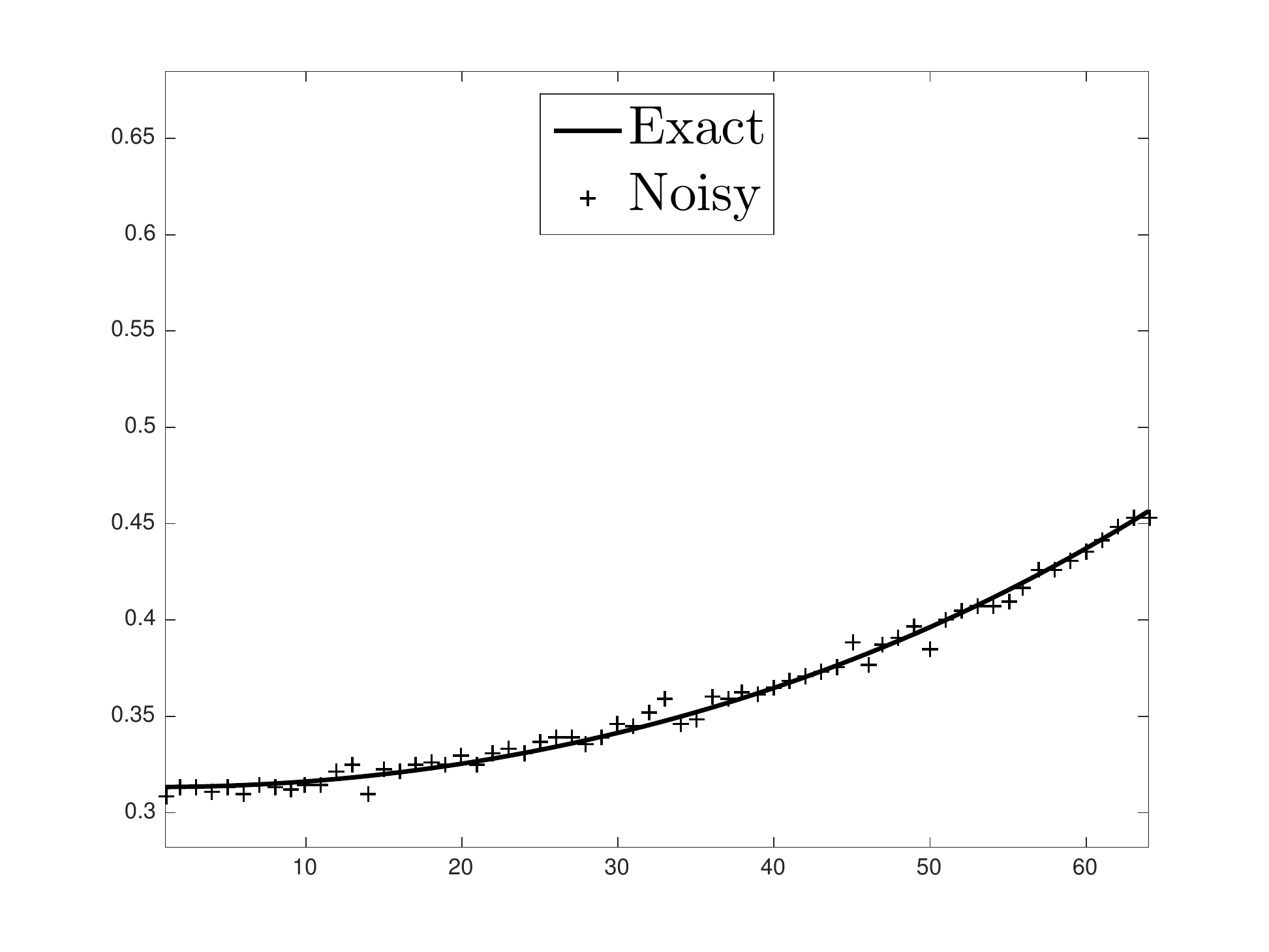}\\ % bb=70 0 500 390
(a) $\xi=10$ & (b) $\xi=20$ & (c) $\xi=40$
\end{tabular}
\caption{Synthetic data for \textit{baart} with $n=64$ and random seed $9$, i.e. {\tt rng(9)}.}
\label{synthetic-data}
\end{figure}

To measure the accuracy of a reconstruction $\mathbf{f}_\alpha^\eta$, we use the relative $\ell_2$ norm error.
ratio between the reconstruction $\mathbf{f}_\alpha^\eta$ and the ground truth $\mathbf{f}^\dag$
\begin{equation*}
\label{l2error}
 \mathrm{\varepsilon}(\alpha,\eta) = \frac{ \|\mathbf{f}_\alpha^{\eta} - \mathbf{f}^\dag\| }{ \| \mathbf{f}^\dag \| }
\end{equation*}
Other quality measures can be used, and the observations below remain unchanged.
Further, we define the `oracle'/optimal $\ell_2$ norm error, denoted by $\varepsilon_o(\eta)$, as
the minimum value of $\ell_2$ norm error achieved along the regularization path. Then we measure the efficiency of
a parameter choice rule by computing the ratio between the oracle $\ell_2$ norm error over the $\ell_2$ norm error of the solution given by the rule
\begin{equation}\label{eq:percentage}
 {\rm eff}_{*}(\eta) = \frac{ {\varepsilon}_o }{ {\varepsilon}(\alpha_{*},\eta) },
\end{equation}
where $*=$ DP, UPRE, BP, \texttt{PRO}, LC, GCV, QOC, \texttt{I-PRO}. Finally, to show the overall performance
of a choice rule on a fixed problem setup, we take the median {of its efficiency} over the 100 replicates.

\begin{figure}
\setlength{\tabcolsep}{2pt}
\begin{tabular}{ccc}
\includegraphics[width=0.32\textwidth]{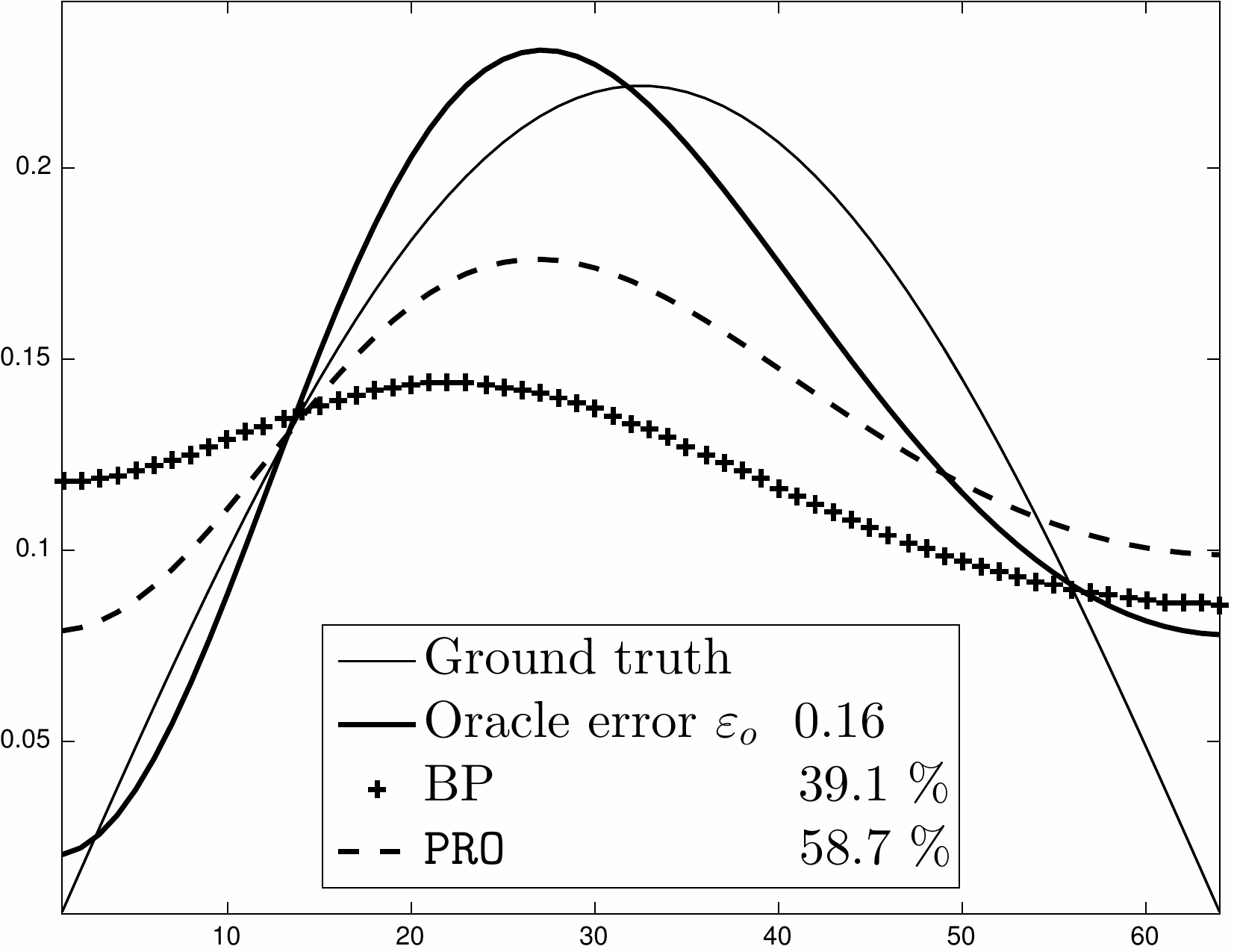}&
\includegraphics[width=0.32\textwidth]{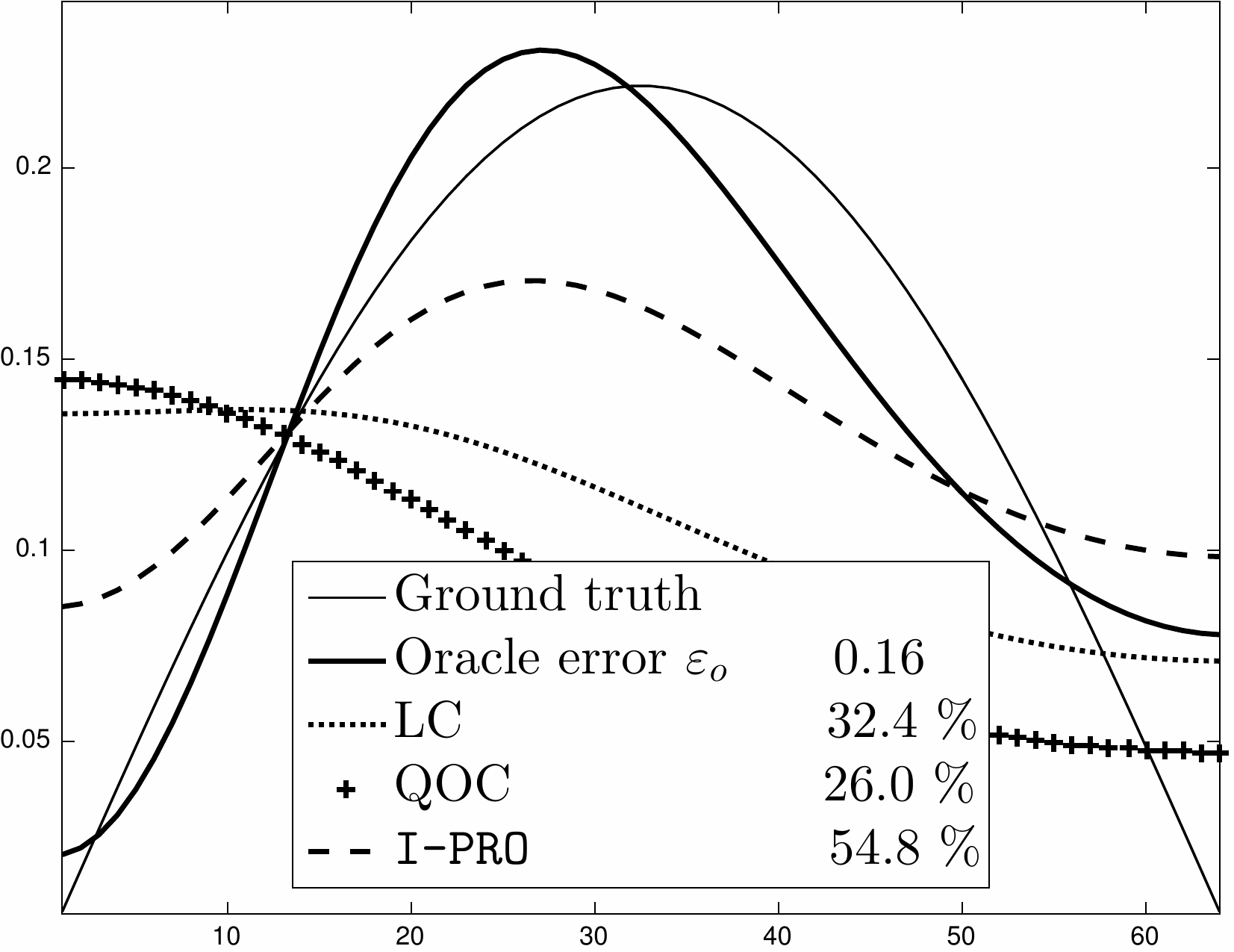}&
\includegraphics[width=0.32\textwidth]{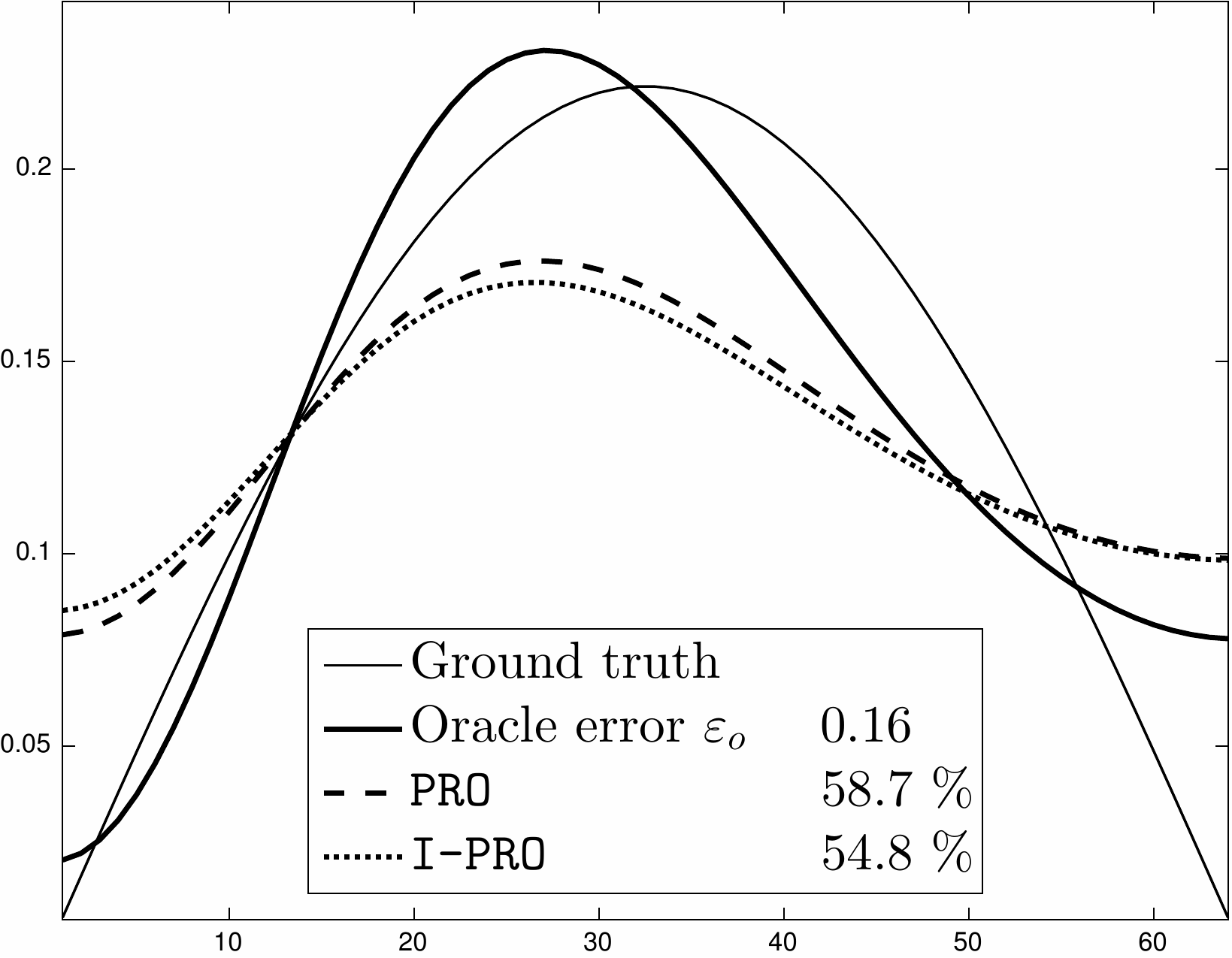}\\
(a) {\tt rng(9)} & (b) {\tt rng(9)} & (c) {\tt rng(9)}\\
\includegraphics[width=0.32\textwidth]{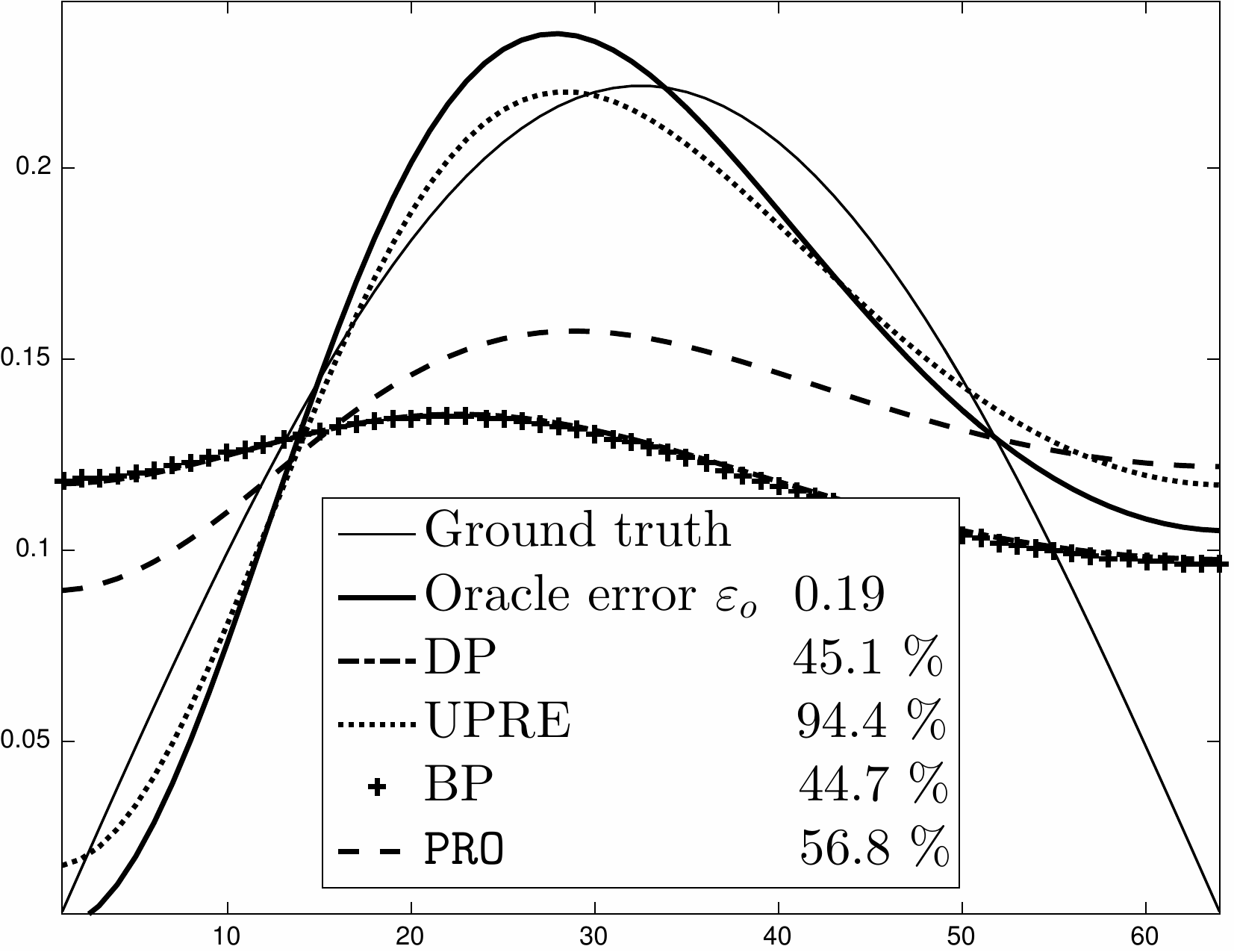}&
\includegraphics[width=0.32\textwidth]{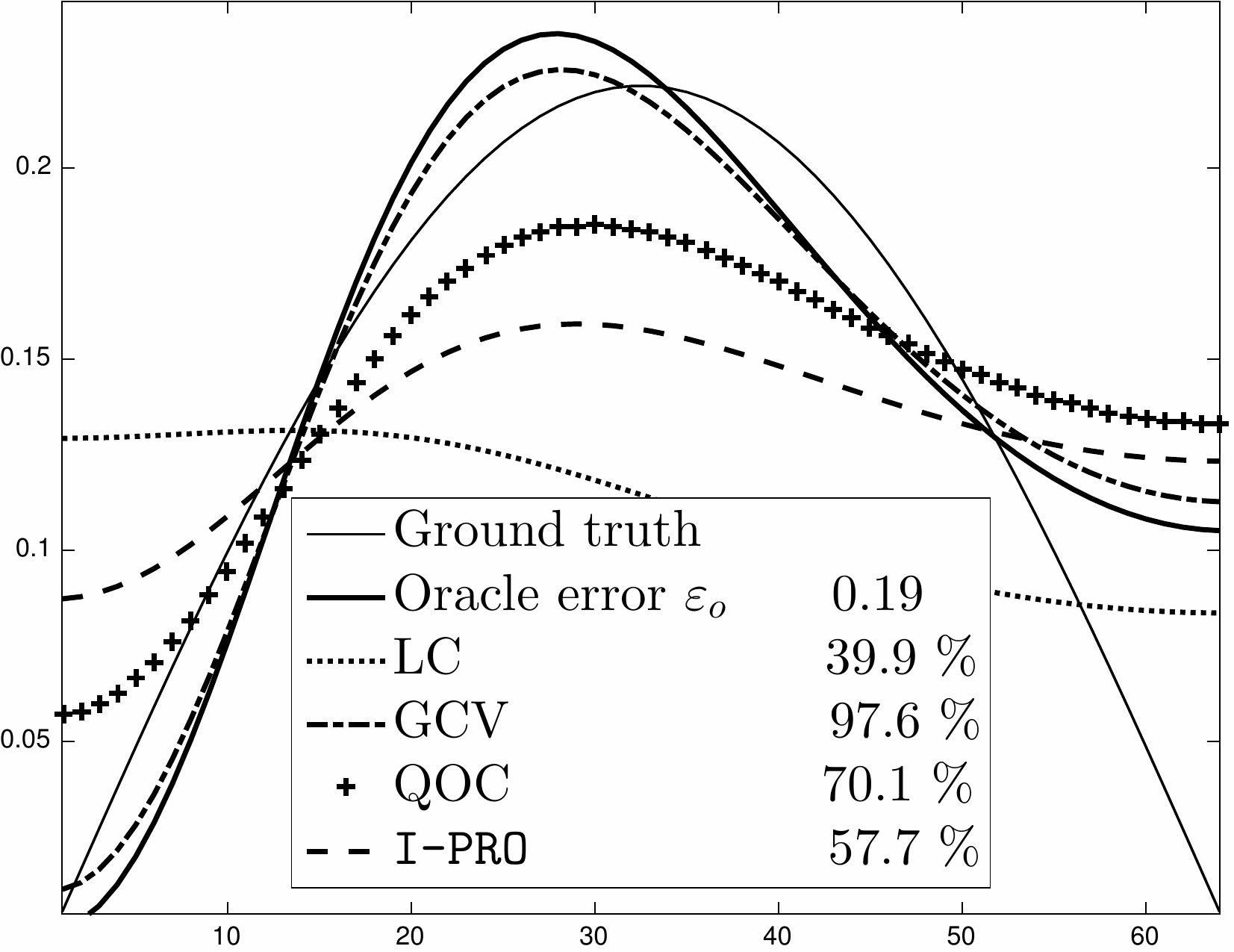}&
\includegraphics[width=0.32\textwidth]{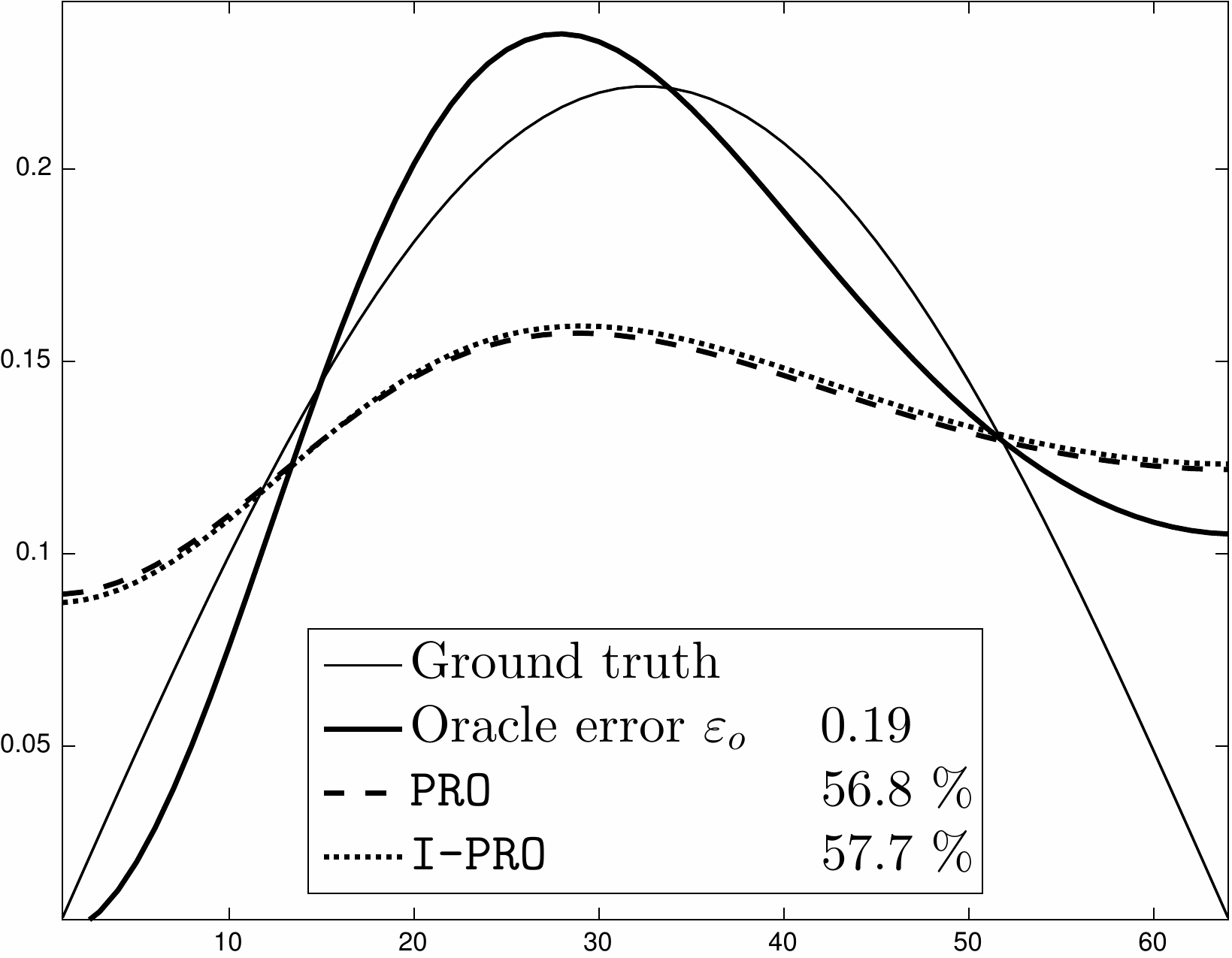}\\
(d) {\tt rng(12)} & (e) {\tt rng(12)} & (f) {\tt rng(12)}
\end{tabular}
\caption{Reconstructions for {\it baart} with $n=64$ and $\xi=10$ and two different noise realizations with {\tt rng(9)} and {\tt rng(12)}.
%In (c) we show reconstructions for noisy data shown in Fig. \ref{synthetic-data}(a).
\label{reconstructions}}
\end{figure}

In Tables \ref{table:1} and \ref{table:2}, we present quantitative results for all the eight choice rules
under consideration at three typical SNR values (i.e., $\xi=10,20,40$) and two sample sizes
(i.e., $n=64$ and $1024$). The results are the median (columns from 3 to 10)
of the empirical efficiency distributions computed from the 100 replicates.
Below we examine the results more closely.

\begin{table}
\centering
\resizebox{0.6\textheight}{!}{
\begin{tabular}{ l c    r r r r   |   r r r r  }
\toprule
&\multicolumn{1}{c}{\,$\ell_2$ error \,} &\multicolumn{4}{c}{$\sigma$ know $\lambda$ unknown} &
\multicolumn{4}{c}{$\sigma$ and $\lambda$ unknowns}\\
\cmidrule(lr){2-2} \cmidrule(lr){3-6} \cmidrule(lr){7-10}
$n=64$ &
\multicolumn{1}{c}{Oracle}
& \multicolumn{1}{c}{DP}
&  \multicolumn{1}{c}{UPRE}
&  \multicolumn{1}{c}{BP}
&  \multicolumn{1}{c}{{\tt PRO}}
&  \multicolumn{1}{c}{LC}
& \multicolumn{1}{c}{GCV}
&  \multicolumn{1}{c}{QOC}
& \multicolumn{1}{c}{{\tt I-PRO}}\\
\midrule
&  \multicolumn{9}{c}{low statistic ($\xi=10$)} \\
\cmidrule(lr){2-2} \cmidrule(lr){3-6} \cmidrule(lr){7-10}
{\it baart} & 0.34 & 43.1$\%$ & 74.5$\%$ & 61.1$\%$ & 78.7$\%$ & 71.1$\%$ & 71.2$\%$ & 79.3$\%$ & 79.5$\%$\\
{\it deriv2} & 0.46 & 76.9$\%$ & 90.2$\%$ & 87.5$\%$ & 99.0$\%$ & 94.4$\%$ & 90.2$\%$ & 84.9$\%$ & 98.8$\%$\\
{\it foxgood} & 0.11 & 12.7$\%$ & 51.5$\%$ & 38.5$\%$ & 77.2$\%$ & 56.0$\%$ & 51.6$\%$ & 60.5$\%$ & 76.4$\%$\\
{\it gravity} & 0.19 & 73.8$\%$ & 76.3$\%$ & 86.9$\%$ & 87.4$\%$ & 93.3$\%$ & 75.2$\%$ & 96.9$\%$ & 86.7$\%$\\
{\it heat(1)} & 0.50 & 87.0$\%$ & 96.9$\%$ & 91.9$\%$ & 89.4$\%$ & 0.1$\%$ & 95.2$\%$ & 82.4$\%$ & 89.1$\%$\\
{\it heat(5)} & 0.38 & 98.5$\%$ & 90.2$\%$ & 80.2$\%$ & 98.5$\%$ & 88.0$\%$ & 88.0$\%$ & 55.0$\%$ & 55.9$\%$\\
{\it i\_laplace(1)} & 0.22 & 63.2$\%$ & 87.4$\%$ & 84.0$\%$ & 96.1$\%$ & 96.6$\%$ & 87.4$\%$ & 96.2$\%$ & 96.4$\%$\\
{\it i\_laplace(2)} & 0.86 & 97.0$\%$ & 99.2$\%$ & 98.3$\%$ & 99.4$\%$ & 99.4$\%$ & 99.3$\%$ & 99.3$\%$ & 99.4$\%$\\
{\it i\_laplace(3)} & 0.28 & 65.8$\%$ & 89.8$\%$ & 84.4$\%$ & 95.9$\%$ & 96.7$\%$ & 87.7$\%$ & 94.9$\%$ & 96.3$\%$\\
{\it phillips} & 0.19 & 77.6$\%$ & 85.4$\%$ & 87.6$\%$ & 95.5$\%$ & 91.6$\%$ & 82.8$\%$ & 97.2$\%$ & 94.4$\%$\\
{\it shaw} & 0.24 & 57.9$\%$ & 88.7$\%$ & 74.1$\%$ & 95.9$\%$ & 95.2$\%$ & 86.3$\%$ & 95.5$\%$ & 96.2$\%$\\
\midrule
&  \multicolumn{9}{c}{medium statistic ($\xi=20$)}  \\
\cmidrule(lr){2-2} \cmidrule(lr){3-6} \cmidrule(lr){7-10}
{\it baart} & 0.21 & 45.7$\%$ & 78.4$\%$ & 35.3$\%$ & 68.8$\%$ & 72.8$\%$ & 72.3$\%$ & 68.4$\%$ & 69.4$\%$\\
{\it deriv2} & 0.39 & 87.7$\%$ & 94.5$\%$ & 93.2$\%$ & 97.5$\%$ & 98.5$\%$ & 92.8$\%$ & 85.1$\%$ & 97.4$\%$\\
{\it foxgood} & 0.06 & 14.5$\%$ & 48.8$\%$ & 32.4$\%$ & 79.5$\%$ & 83.7$\%$ & 48.7$\%$ & 86.9$\%$ & 79.6$\%$\\
{\it gravity} & 0.11 & 84.4$\%$ & 78.2$\%$ & 85.5$\%$ & 90.1$\%$ & 76.0$\%$ & 78.3$\%$ & 97.3$\%$ & 87.5$\%$\\
{\it heat(1)} & 0.33 & 90.4$\%$ & 89.8$\%$ & 96.1$\%$ & 74.2$\%$ & 0.3$\%$ & 86.9$\%$ & 58.6$\%$ & 71.3$\%$\\
{\it heat(5)} & 0.18 & 98.4$\%$ & 92.8$\%$ & 98.5$\%$ & 99.4$\%$ & 94.5$\%$ & 94.5$\%$ & 80.9$\%$ & 80.9$\%$\\
{\it i\_laplace(1)} & 0.16 & 74.8$\%$ & 88.1$\%$ & 82.9$\%$ & 95.1$\%$ & 95.3$\%$ & 88.2$\%$ & 92.1$\%$ & 95.2$\%$\\
{\it i\_laplace(2)} & 0.84 & 97.9$\%$ & 99.0$\%$ & 98.3$\%$ & 98.8$\%$ & 99.4$\%$ & 99.0$\%$ & 99.0$\%$ & 98.8$\%$\\
{\it i\_laplace(3)} & 0.18 & 60.6$\%$ & 86.2$\%$ & 69.0$\%$ & 89.4$\%$ & 92.2$\%$ & 85.8$\%$ & 77.7$\%$ & 89.6$\%$\\
{\it phillips} & 0.09 & 82.9$\%$ & 75.2$\%$ & 80.4$\%$ & 90.3$\%$ & 73.9$\%$ & 73.5$\%$ & 99.0$\%$ & 88.0$\%$\\
{\it shaw} & 0.18 & 70.8$\%$ & 84.4$\%$ & 82.6$\%$ & 97.6$\%$ & 95.8$\%$ & 84.5$\%$ & 95.5$\%$ & 97.5$\%$\\
\midrule
&  \multicolumn{9}{c}{high statistic ($\xi=40$)}  \\
\cmidrule(lr){2-2} \cmidrule(lr){3-6} \cmidrule(lr){7-10}
% \cmidrule(lr){2-10}
{\it baart} & 0.15 & 51.7$\%$ & 76.9$\%$ & 24.9$\%$ & 72.2$\%$ & 88.9$\%$ & 71.2$\%$ & 89.8$\%$ & 70.6$\%$\\
{\it deriv2} & 0.26 & 95.0$\%$ & 95.6$\%$ & 97.7$\%$ & 88.0$\%$ & 91.6$\%$ & 95.5$\%$ & 88.9$\%$ & 86.2$\%$\\
{\it foxgood} & 0.02 & 24.7$\%$ & 56.1$\%$ & 10.5$\%$ & 84.9$\%$ & 49.5$\%$ & 41.8$\%$ & 84.7$\%$ & 84.6$\%$\\
{\it gravity} & 0.04 & 86.4$\%$ & 84.8$\%$ & 80.4$\%$ & 95.9$\%$ & 45.6$\%$ & 84.9$\%$ & 96.2$\%$ & 95.5$\%$\\
{\it heat(1)} & 0.13 & 95.4$\%$ & 82.5$\%$ & 23.8$\%$ & 59.7$\%$ & 1.6$\%$ & 78.9$\%$ & 96.1$\%$ & 36.2$\%$\\
{\it heat(5)} & 0.02 & 90.3$\%$ & 99.8$\%$ & 85.3$\%$ & 99.8$\%$ & 16.4$\%$ & 16.4$\%$ & 99.6$\%$ & 99.6$\%$\\
{\it i\_laplace(1)} & 0.10 & 73.2$\%$ & 87.0$\%$ & 83.4$\%$ & 92.3$\%$ & 85.5$\%$ & 86.9$\%$ & 87.8$\%$ & 92.7$\%$\\
{\it i\_laplace(2)} & 0.82 & 98.4$\%$ & 99.1$\%$ & 98.6$\%$ & 97.9$\%$ & 99.6$\%$ & 99.1$\%$ & 98.5$\%$ & 97.7$\%$\\
{\it i\_laplace(3)} & 0.06 & 60.6$\%$ & 78.2$\%$ & 68.0$\%$ & 68.9$\%$ & 69.2$\%$ & 73.4$\%$ & 94.1$\%$ & 67.3$\%$\\
{\it phillips} & 0.03 & 93.5$\%$ & 56.8$\%$ & 90.6$\%$ & 72.0$\%$ & 22.3$\%$ & 51.1$\%$ & 98.0$\%$ & 70.3$\%$\\
{\it shaw} & 0.11 & 64.4$\%$ & 87.0$\%$ & 68.4$\%$ & 79.6$\%$ & 91.8$\%$ & 87.4$\%$ & 69.1$\%$ & 80.0$\%$\\
\bottomrule
\end{tabular}}
\caption{Performance comparison between choice rules DP, UPRE, BP, {\tt PRO}, LC, GCV, QOC, {\tt I-PRO} on examples from Regularization tools.
For each row: column one gives the problem and column two shows the oracle $\ell_2$-norm error.
From the third to tenth column the median efficiency of the choice rule (see equation \ref{eq:percentage}).
The three blocks correspond to three different levels of statistic. The sample size $n$ of each problem is 64.\label{table:1}}
\end{table}

\begin{table}
\centering
\resizebox{0.6\textheight}{!}{
\begin{tabular}{ l c    r r r r   |   r r r r  }
\toprule
&
\multicolumn{1}{c}{\,$\ell_2$ error \,} &
\multicolumn{4}{c}{$\sigma$ know $\lambda$ unknown} &
\multicolumn{4}{c}{$\sigma$ and $\lambda$ unknowns}\\
\cmidrule(lr){2-2} \cmidrule(lr){3-6} \cmidrule(lr){7-10}
$n=1024$ &
\multicolumn{1}{c}{Oracle}
& \multicolumn{1}{c}{DP}
&  \multicolumn{1}{c}{UPRE}
&  \multicolumn{1}{c}{BP}
&  \multicolumn{1}{c}{{\tt PRO}}
&  \multicolumn{1}{c}{LC}
& \multicolumn{1}{c}{GCV}
&  \multicolumn{1}{c}{QOC}
& \multicolumn{1}{c}{{\tt I-PRO}}\\
\midrule
&  \multicolumn{9}{c}{low statistic ($\xi=10$)} \\
\cmidrule(lr){2-2} \cmidrule(lr){3-6} \cmidrule(lr){7-10}
{\it baart} & 0.21 & 30.1$\%$ & 86.7$\%$ & 41.9$\%$ & 69.8$\%$ & 41.6$\%$ & 83.3$\%$ & 74.5$\%$ & 70.3$\%$\\
{\it deriv2} & 0.37 & 70.6$\%$ & 96.4$\%$ & 67.4$\%$ & 95.6$\%$ & 78.0$\%$ & 96.3$\%$ & 90.1$\%$ & 95.9$\%$\\
{\it foxgood} & 0.06 & 4.3$\%$ & 55.6$\%$ & 20.0$\%$ & 84.4$\%$ & 29.7$\%$ & 49.3$\%$ & 88.8$\%$ & 84.2$\%$\\
{\it gravity} & 0.09 & 40.0$\%$ & 77.8$\%$ & 42.3$\%$ & 90.1$\%$ & 95.5$\%$ & 76.1$\%$ & 97.4$\%$ & 90.4$\%$\\
{\it heat(1)} & 0.30 & 67.0$\%$ & 91.7$\%$ & 50.8$\%$ & 71.8$\%$ & 56.8$\%$ & 90.2$\%$ & 51.3$\%$ & 71.5$\%$\\
{\it heat(5)} & 0.20 & 97.4$\%$ & 62.2$\%$ & 99.9$\%$ & 83.5$\%$ & 0.1$\%$ & 60.9$\%$ & 0.1$\%$ & 81.8$\%$\\
{\it i\_laplace(1)} & 0.26 & 81.8$\%$ & 96.4$\%$ & 76.4$\%$ & 93.8$\%$ & 97.0$\%$ & 96.4$\%$ & 98.0$\%$ & 93.8$\%$\\
{\it i\_laplace(2)} & 0.81 & 96.6$\%$ & 99.4$\%$ & 96.4$\%$ & 99.5$\%$ & 98.2$\%$ & 99.4$\%$ & 99.0$\%$ & 99.5$\%$\\
{\it i\_laplace(3)} & 0.20 & 45.7$\%$ & 90.2$\%$ & 47.7$\%$ & 78.1$\%$ & 65.0$\%$ & 90.7$\%$ & 92.1$\%$ & 78.2$\%$\\
{\it phillips} & 0.08 & 44.4$\%$ & 64.4$\%$ & 39.0$\%$ & 86.3$\%$ & 98.3$\%$ & 65.5$\%$ & 99.1$\%$ & 86.8$\%$\\
{\it shaw} & 0.18 & 41.1$\%$ & 87.1$\%$ & 54.0$\%$ & 97.7$\%$ & 86.6$\%$ & 83.2$\%$ & 97.6$\%$ & 97.9$\%$\\
\midrule
&  \multicolumn{9}{c}{medium statistic ($\xi=20$)}  \\
\cmidrule(lr){2-2} \cmidrule(lr){3-6} \cmidrule(lr){7-10}
{\it baart} & 0.17 & 29.4$\%$ & 80.8$\%$ & 28.7$\%$ & 67.8$\%$ & 58.8$\%$ & 79.6$\%$ & 81.5$\%$ & 68.0$\%$\\
{\it deriv2} & 0.31 & 76.5$\%$ & 95.8$\%$ & 71.6$\%$ & 92.0$\%$ & 85.5$\%$ & 95.4$\%$ & 89.6$\%$ & 91.9$\%$\\
{\it foxgood} & 0.04 & 6.3$\%$ & 49.1$\%$ & 16.0$\%$ & 91.5$\%$ & 52.1$\%$ & 45.9$\%$ & 88.2$\%$ & 91.2$\%$\\
{\it gravity} & 0.05 & 45.7$\%$ & 81.1$\%$ & 46.0$\%$ & 91.7$\%$ & 97.5$\%$ & 81.6$\%$ & 96.8$\%$ & 91.2$\%$\\
{\it heat(1)} & 0.19 & 74.0$\%$ & 86.8$\%$ & 59.5$\%$ & 64.1$\%$ & 84.7$\%$ & 86.3$\%$ & 95.5$\%$ & 62.7$\%$\\
{\it heat(5)} & 0.10 & 91.1$\%$ & 53.7$\%$ & 98.8$\%$ & 74.3$\%$ & 0.2$\%$ & 52.6$\%$ & 0.2$\%$ & 72.3$\%$\\
{\it i\_laplace(1)} & 0.24 & 87.0$\%$ & 97.5$\%$ & 83.5$\%$ & 93.0$\%$ & 97.3$\%$ & 97.6$\%$ & 95.8$\%$ & 93.0$\%$\\
{\it i\_laplace(2)} & 0.79 & 97.1$\%$ & 99.5$\%$ & 96.8$\%$ & 98.9$\%$ & 98.4$\%$ & 99.5$\%$ & 99.0$\%$ & 99.0$\%$\\
{\it i\_laplace(3)} & 0.12 & 46.5$\%$ & 91.3$\%$ & 45.3$\%$ & 72.2$\%$ & 76.2$\%$ & 91.3$\%$ & 95.1$\%$ & 71.8$\%$\\
{\it phillips} & 0.04 & 53.7$\%$ & 54.4$\%$ & 41.8$\%$ & 72.8$\%$ & 90.0$\%$ & 52.8$\%$ & 99.1$\%$ & 73.2$\%$\\
{\it shaw} & 0.15 & 47.9$\%$ & 87.6$\%$ & 68.9$\%$ & 95.1$\%$ & 93.7$\%$ & 85.9$\%$ & 91.6$\%$ & 95.1$\%$\\
\midrule
&  \multicolumn{9}{c}{high statistic ($\xi=40$)}  \\
\cmidrule(lr){2-2} \cmidrule(lr){3-6} \cmidrule(lr){7-10}
% \cmidrule(lr){2-10}
{\it baart} & 0.12 & 30.1$\%$ & 78.6$\%$ & 19.3$\%$ & 68.6$\%$ & 78.1$\%$ & 76.1$\%$ & 76.2$\%$ & 68.4$\%$\\
{\it deriv2} & 0.21 & 88.4$\%$ & 95.4$\%$ & 78.3$\%$ & 82.1$\%$ & 98.7$\%$ & 95.1$\%$ & 89.6$\%$ & 81.6$\%$\\
{\it foxgood} & 0.02 & 11.8$\%$ & 56.9$\%$ & 7.1$\%$ & 88.1$\%$ & 90.1$\%$ & 55.8$\%$ & 72.3$\%$ & 88.7$\%$\\
{\it gravity} & 0.02 & 54.6$\%$ & 82.3$\%$ & 49.4$\%$ & 96.9$\%$ & 79.3$\%$ & 80.9$\%$ & 94.0$\%$ & 96.8$\%$\\
{\it heat(1)} & 0.07 & 87.7$\%$ & 77.3$\%$ & 12.4$\%$ & 53.1$\%$ & 85.6$\%$ & 77.1$\%$ & 99.0$\%$ & 48.5$\%$\\
{\it heat(5)} & 0.02 & 71.8$\%$ & 39.0$\%$ & 92.2$\%$ & 54.8$\%$ & 0.5$\%$ & 37.7$\%$ & 0.5$\%$ & 51.7$\%$\\
{\it i\_laplace(1)} & 0.21 & 90.8$\%$ & 98.3$\%$ & 88.3$\%$ & 91.2$\%$ & 98.5$\%$ & 98.3$\%$ & 95.6$\%$ & 91.0$\%$\\
{\it i\_laplace(2)} & 0.77 & 98.3$\%$ & 99.7$\%$ & 97.9$\%$ & 98.4$\%$ & 99.2$\%$ & 99.7$\%$ & 98.9$\%$ & 98.4$\%$\\
{\it i\_laplace(3)} & 0.05 & 47.6$\%$ & 86.8$\%$ & 43.9$\%$ & 59.0$\%$ & 97.1$\%$ & 87.4$\%$ & 95.9$\%$ & 58.6$\%$\\
{\it phillips} & 0.02 & 76.2$\%$ & 65.5$\%$ & 61.0$\%$ & 83.7$\%$ & 50.9$\%$ & 63.6$\%$ & 93.9$\%$ & 83.1$\%$\\
{\it shaw} & 0.06 & 38.2$\%$ & 87.9$\%$ & 36.6$\%$ & 56.7$\%$ & 92.3$\%$ & 87.1$\%$ & 80.6$\%$ & 56.2$\%$\\
\bottomrule
\end{tabular}}
\caption{See caption of Table \ref{table:1}, but with the sample size $n$ of each problem being 1024. \label{table:2}}
\end{table}

First, the oracle $\ell_2$ norm error, $\varepsilon_o$, in the second column depends crucially on the problem type,
condition number and ground truth. When $n=64$ (see Table \ref{table:1}), except for one case, $\varepsilon_o$
ranges from 0.11 to 0.50, from 0.06 to 0.39, and from 0.02 to 0.26, for low, medium and high statistic, respectively. In the
exceptional case, i.e., \textit{i\_laplace(2)}, the ground truth object $\mathbf{f}^\dag$ to be recovered is actually almost invisible
within the meaning of the inverse problems theory \cite{bertero1998introduction}, and so $\varepsilon_o$ fails to reach barely $0.82$.
For all choice rules with $\sigma^2$ known, the efficiency of any parameter choice rule is almost always larger than
$60\%$. The efficiency depends on both the choice rule and specific problem. For example, \textit{baart} and \textit{foxgood} are
fairly challenging for all eight choice rules. In particular, on these two problems, DP does not perform as well as the others.
This may be related to the well known fact that DP is sensitive to the estimation accuracy of the noise variance $\sigma^2$.
Despite a strong dependency on the problem, the percentage generally increases with the SNR, indicating the convergence
of these rules (at least on average). It is worth noting that LC is very effective in most cases but it fails spectacularly on
\textit{heat}, for which all other methods work reasonably well. Moreover, the efficiency distributions for DP, UPRE, GCV (and
occasionally also LC) have very low median, and thus in practice, these choice rules can fail to choose a suitable $\alpha$ value.
The observations on GCV is consistent with prior empirical study in \cite{Lucka:2018}. Surprisingly, \texttt{PRO} and \texttt{I-PRO}
(and BP and QOC) can consistently provide satisfactory solutions for all noise realizations. This property is highly desirable in
practice, and it is attributed to the fact that \texttt{PRO} and \texttt{I-PRO} are designed to achieve the minimum error in the
mean squared sense (at least approximately!), and thus are fairly robust with respect to noise realization. This
contrasts sharply with other existing choice rules, which always suffer from a significant number of failures. One example of
failure is given in Fig. {\ref{reconstructions}}: whereas in (d) and (e) all choice rules yield good estimates of the ground truth,
in (a) and (b) existing choice rules, e.g., UPRE or GCV, are highly inefficient and the corresponding solutions are omitted since
they extend over several orders of magnitude above the ground truth.

In practice, it is also important to check the variance of the efficiency distribution. By considering each
problem separately (given the SNR $\xi$ and the size $n$), the simulation study indicates that the empirical
efficiency distribution by \texttt{PRO} has a very small variance. This is due to the fact that given $\sigma^2$, the estimation
$\hat{\rho^2}$ of the data norm $\rho^2$ given in \eqref{eq:ume} is relatively insensitive to the noise realization, especially
when the sample size $n$ is not too small. In contrast, the empirical efficiency distributions by DP, UPRE
and GCV (and sometimes also LC) spread out, and thus these methods do not enjoy uniform performance with respect
to noise realization. This behavior is clearly visible in Fig. \ref{figure:boxplots}: \texttt{PRO} and \texttt{I-PRO} plots
show distributions with small variances, whereas that of the other methods have broad tails; see also Fig. \ref{reconstructions}
for the illustration of the excellent stability of the \texttt{PRO} and \texttt{I-PRO} solutions. Overall, BP works fairly well,
but it is below par on the example \textit{baart}. Fig. \ref{figure:boxplots} indicates that among all eight rules,
\texttt{PRO}, \texttt{I-PRO} and QOC emerge as the strongest contenders.
\begin{figure}
\includegraphics[width=0.33\textwidth]{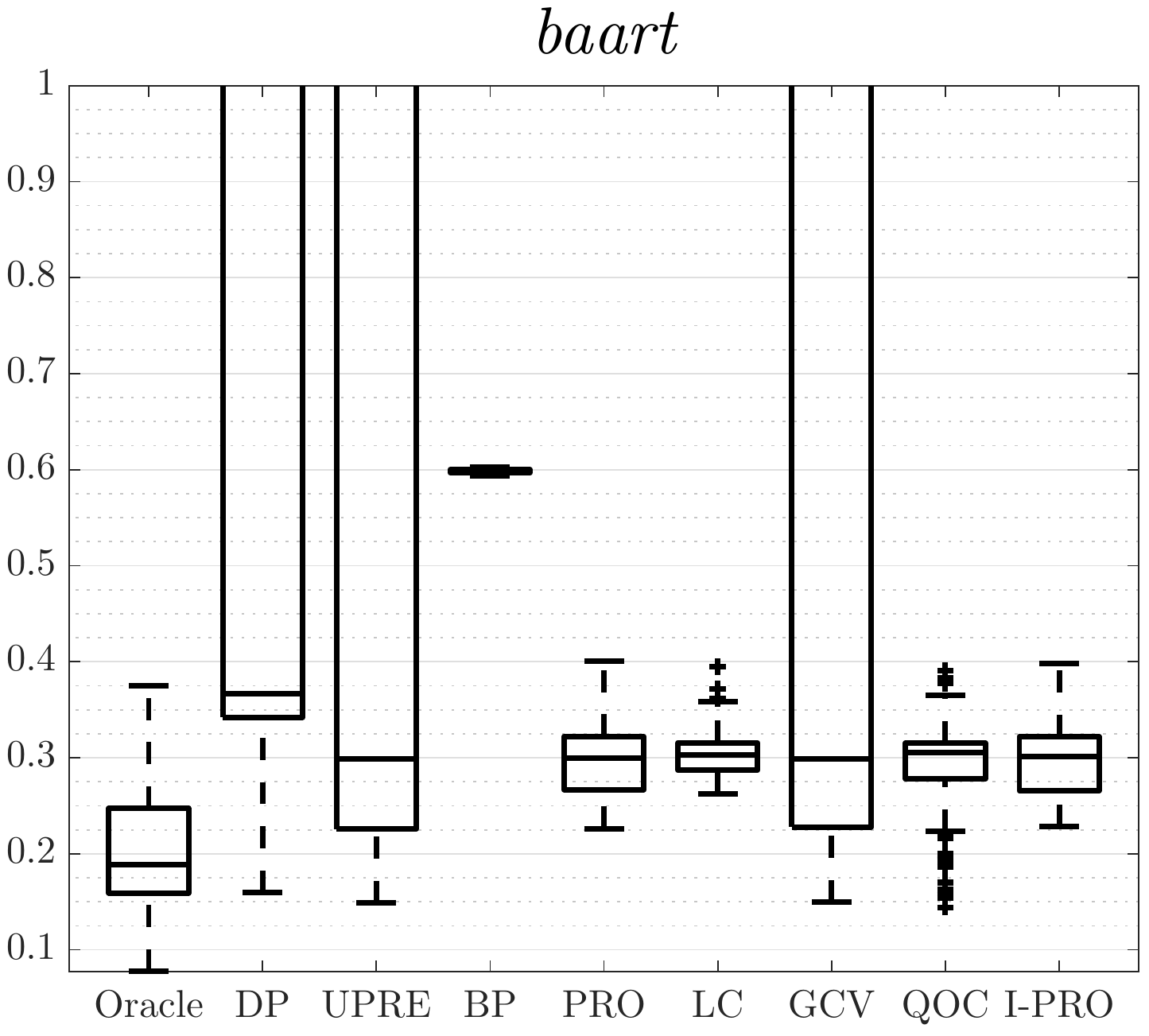}
\includegraphics[width=0.33\textwidth]{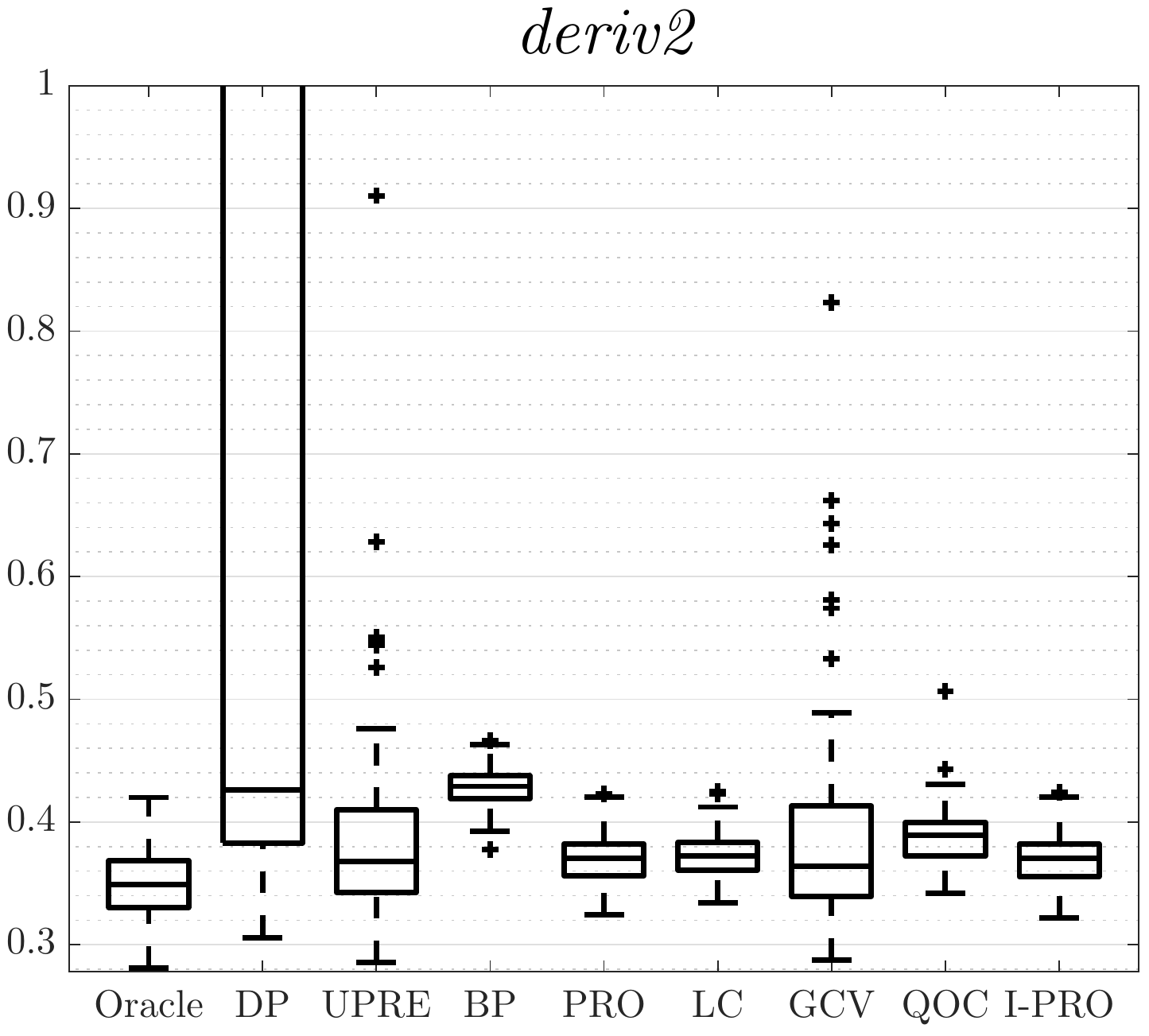}
\includegraphics[width=0.33\textwidth]{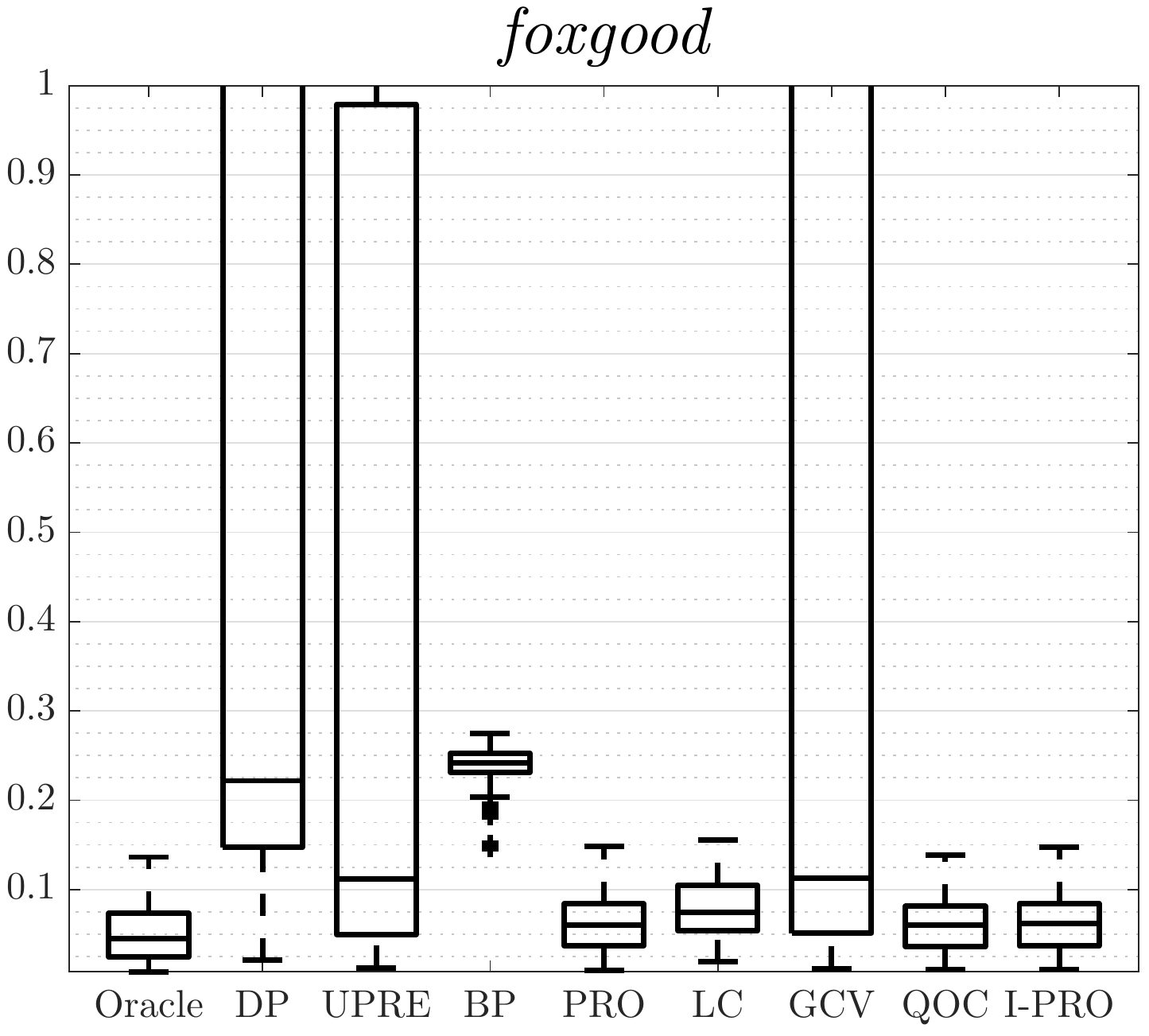}\\[2pt]
\includegraphics[width=0.33\textwidth]{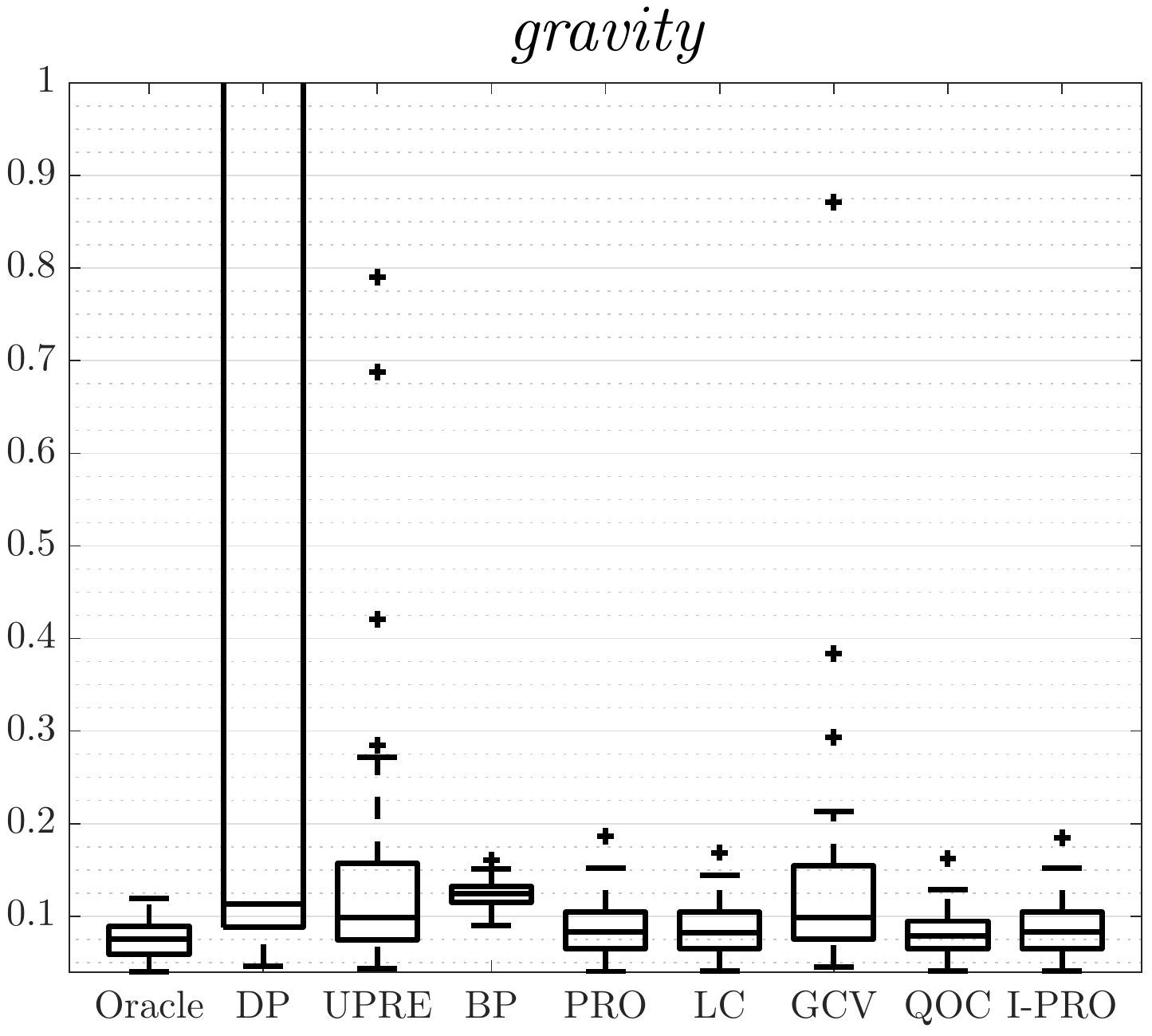}
\includegraphics[width=0.33\textwidth]{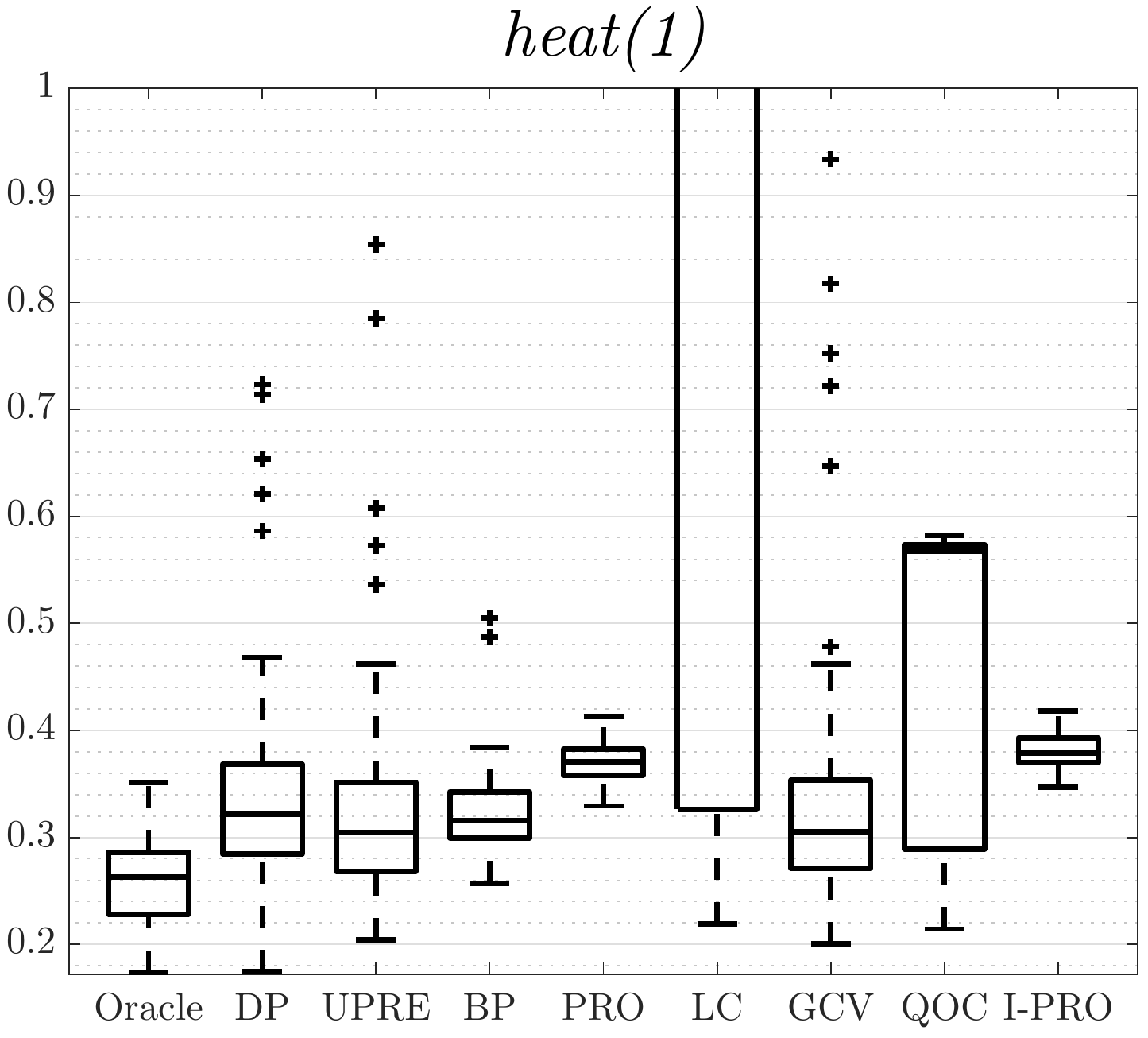}
\includegraphics[width=0.33\textwidth]{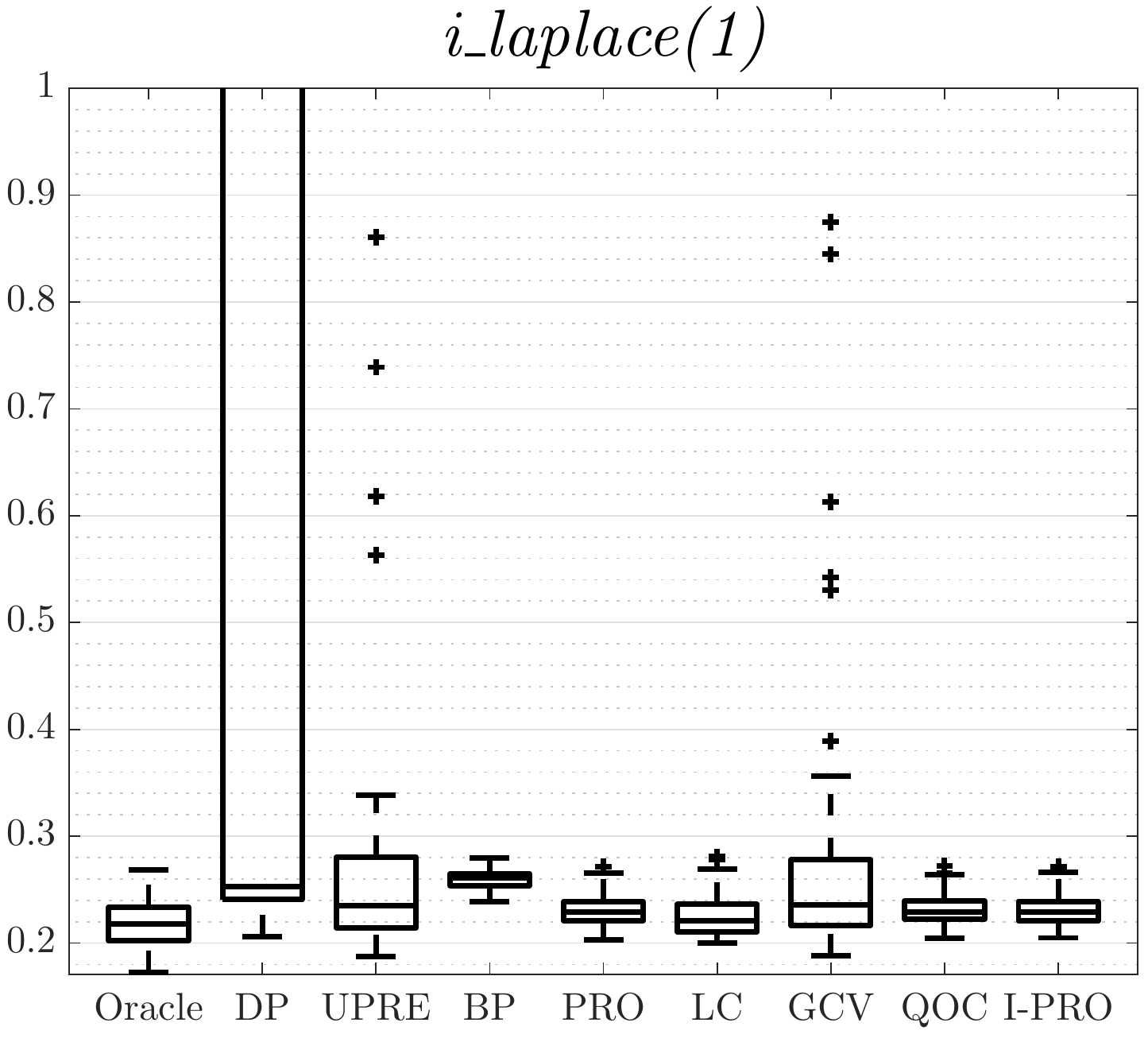}\\[2pt]
\includegraphics[width=0.33\textwidth]{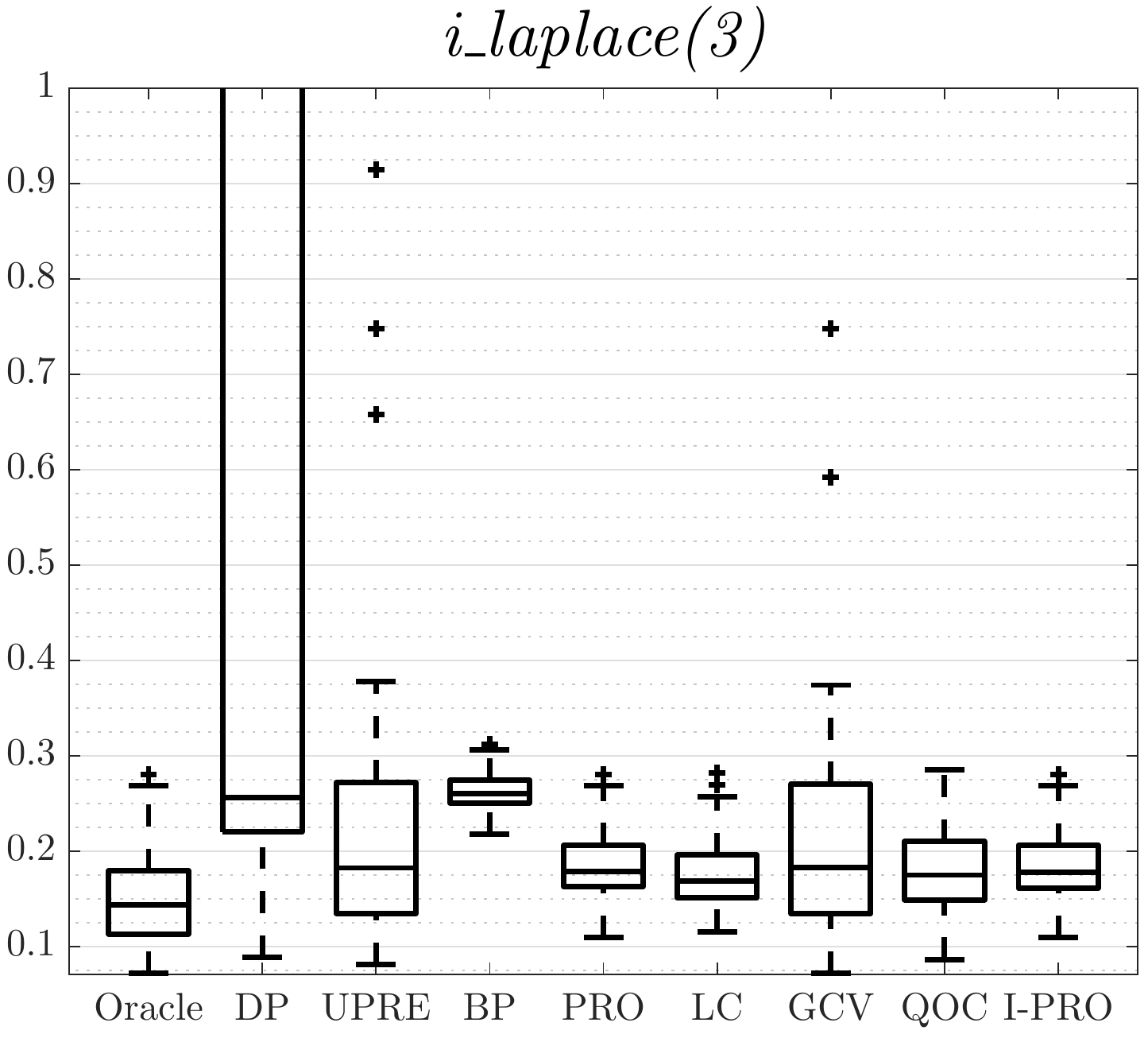}
\includegraphics[width=0.33\textwidth]{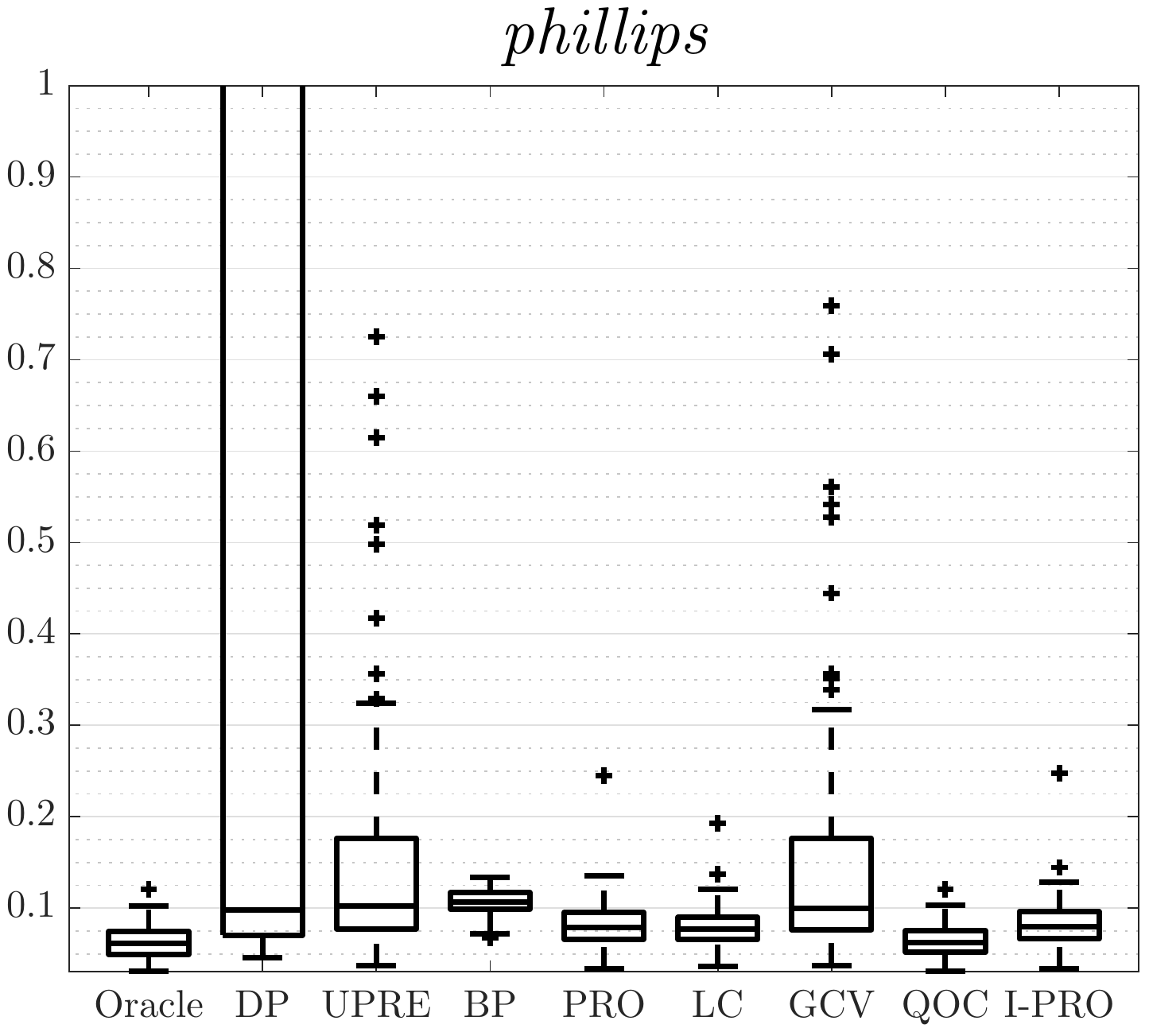}
\includegraphics[width=0.33\textwidth]{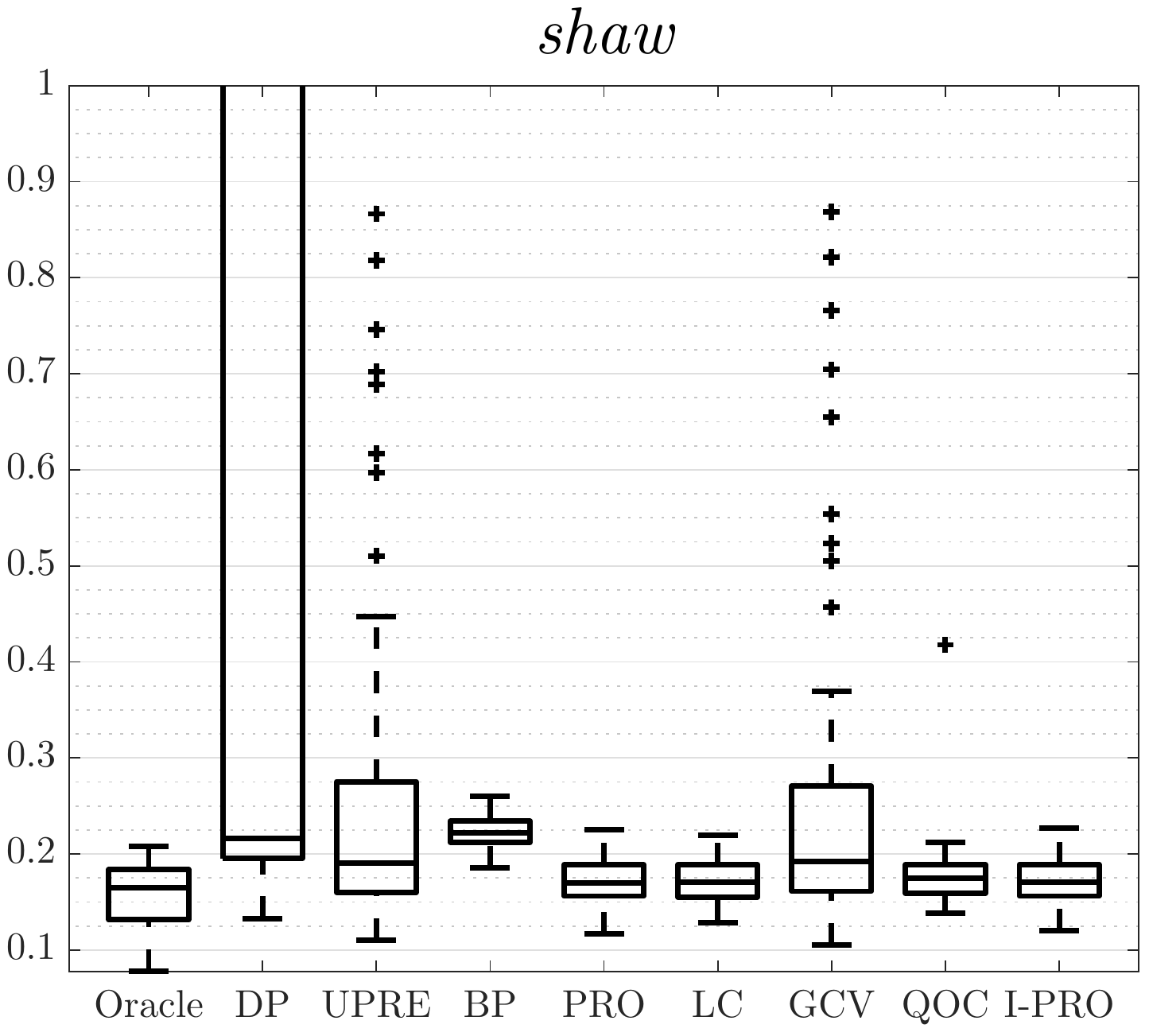}
\caption{Box plots of the $\ell_2$-norm error distributions over 100 samples for nine 1d problems,
with SNR $\xi = 20$ and $n=256$.}\label{figure:boxplots}
\end{figure}

Now we provide numerical insights into the fixed point iteration in the \texttt{I-PRO} algorithm, when
$\sigma$ is unknown. Numerically, it always converges rapidly, and the corresponding fixed point equation
has few fixed points, if not unique globally. (It was proved to be unique within the interval $[0,s_1^2/2]$
in Proposition \ref{lemma}.) However, the convergence of the algorithm is necessary but not
sufficient for ensuring the good performance of \texttt{I-PRO}: the converged solution can potentially
be very bad! Remarkably, \texttt{I-PRO} estimates are always very close to that by \texttt{PRO},
which is consistently observed on all benchmark examples. This is evident by comparing the 7-th column
with the 11-th column of Tables \ref{table:1} and \ref{table:2}: the reconstruction
errors of \texttt{PRO} and \texttt{I-PRO} are nearly identical, except for \textit{heat(5)}. In the
exceptional case, i.e., \textit{heat(5)} with  $\xi=10$ and $n=64$, the condition number is very small
(around $3$) so the problem is actually well posed. In this case, the efficiency of \texttt{PRO} is about $99\%$,
while the efficiency of \texttt{I-PRO} is only about $56\%$. This difference originates from the fixed point
iteration: a close inspection shows that the fixed point iteration converges to an $\alpha$ value very close
to zero. Practically, this seems not a serious restriction, since reconstructions actually do not change
much when varying $\alpha$ near zero, due to the well-conditioned  nature of the specific problem. Although not
proved, the fixed point iteration is observed to work well for all ill-conditioned problems.

\begin{figure}
\setlength{\tabcolsep}{2pt}
\begin{tabular}{cccc}
\includegraphics[width=0.32\textwidth]{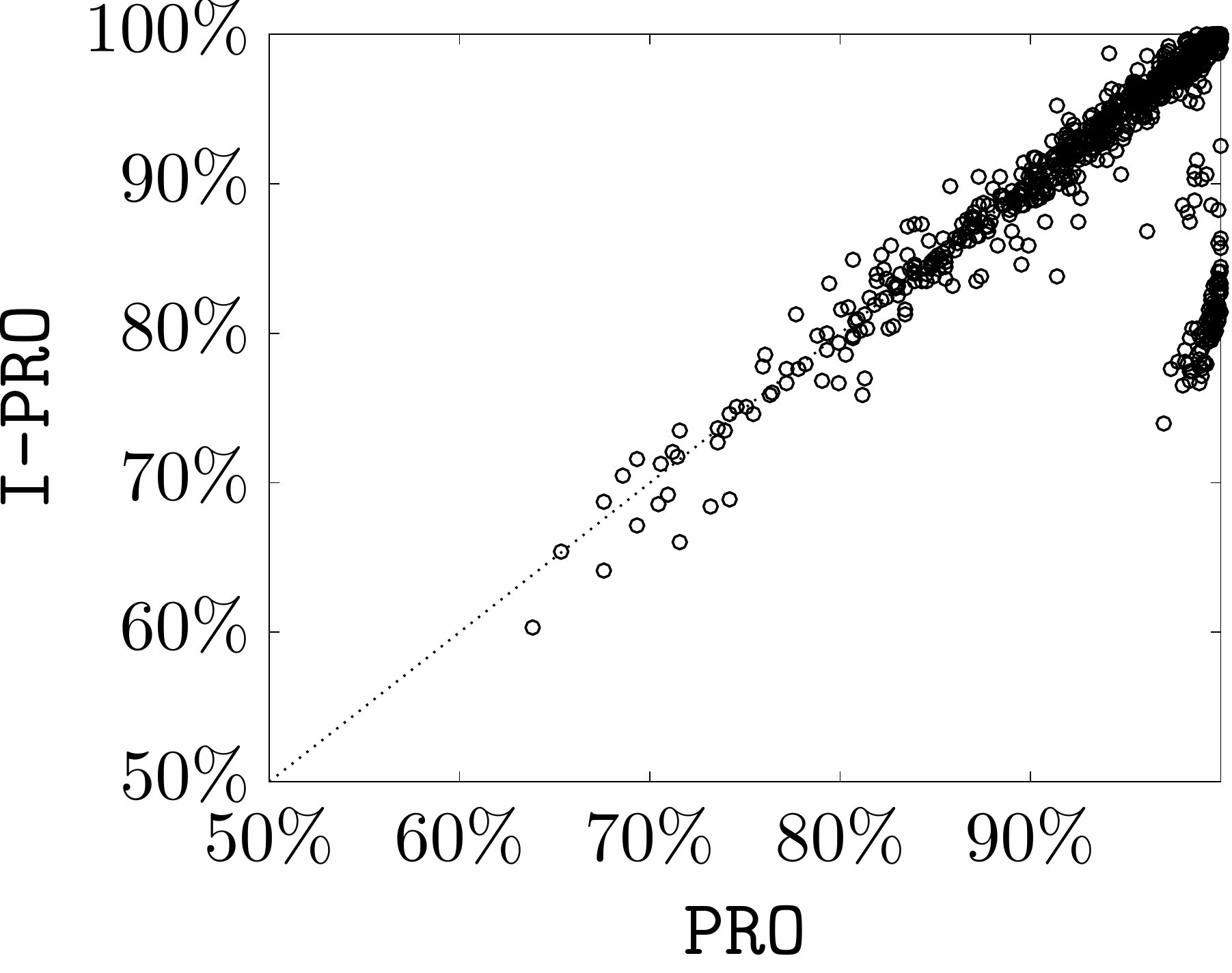}&
\includegraphics[width=0.32\textwidth]{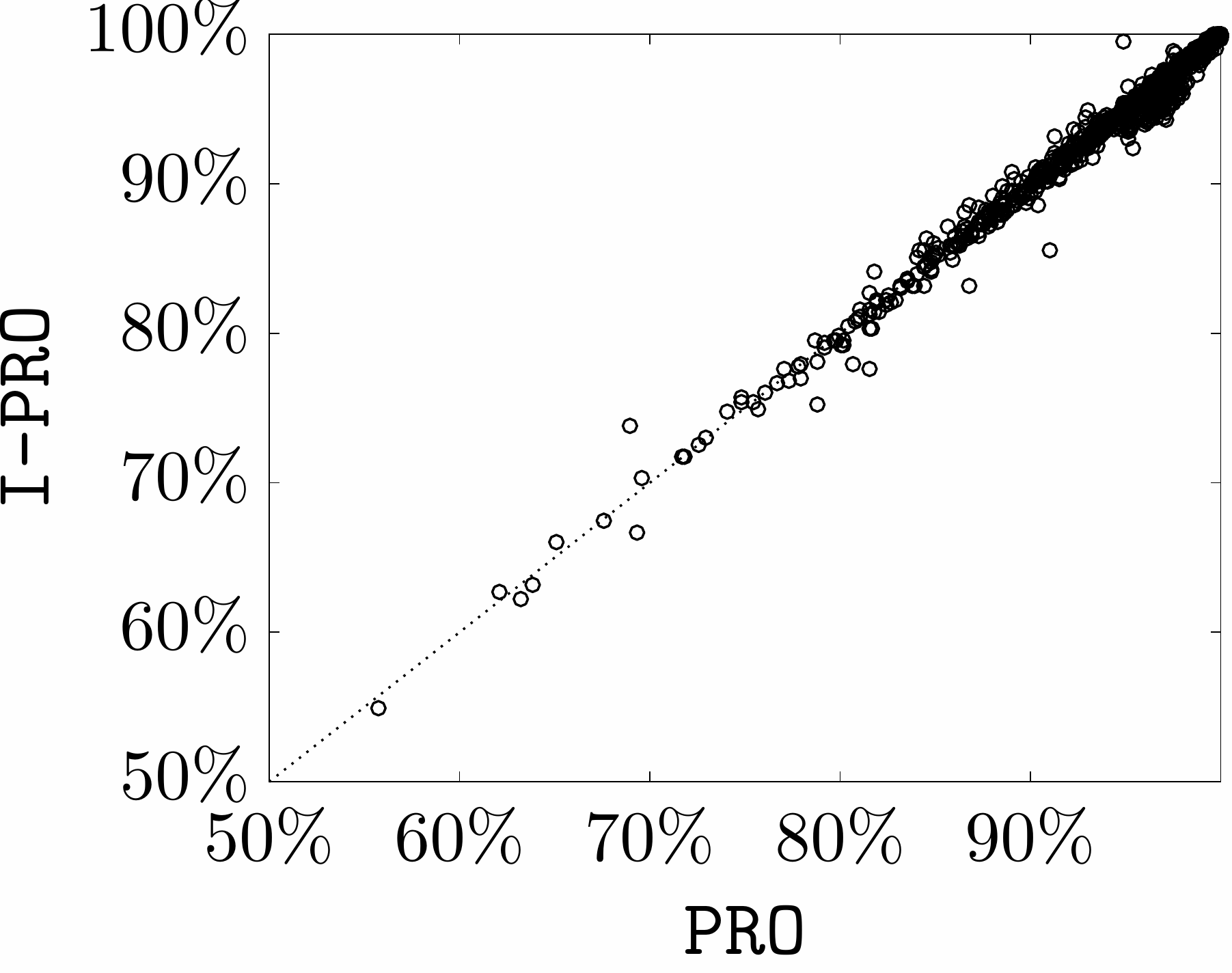}&
\includegraphics[width=0.32\textwidth]{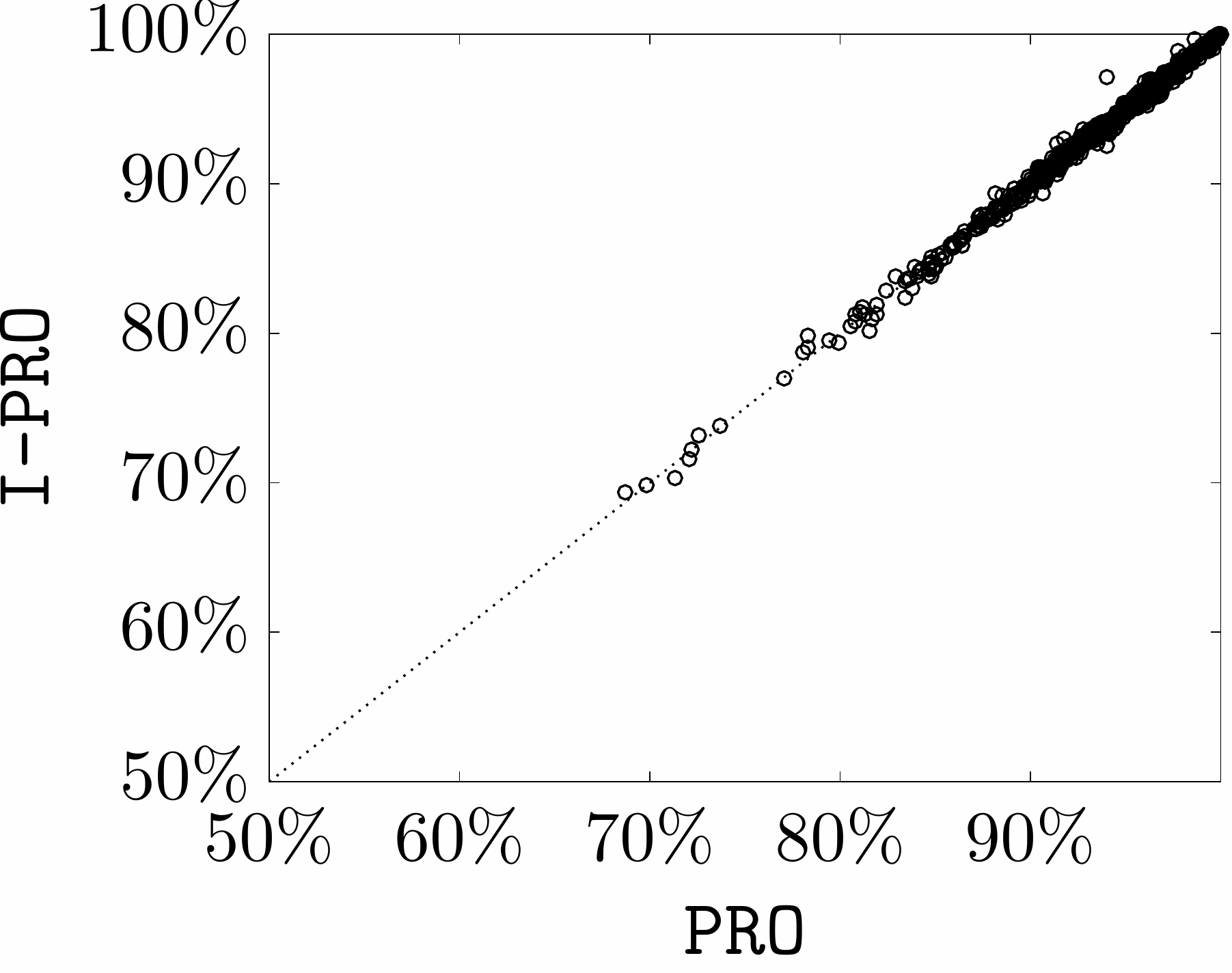}\\[4pt]
\includegraphics[width=0.32\textwidth]{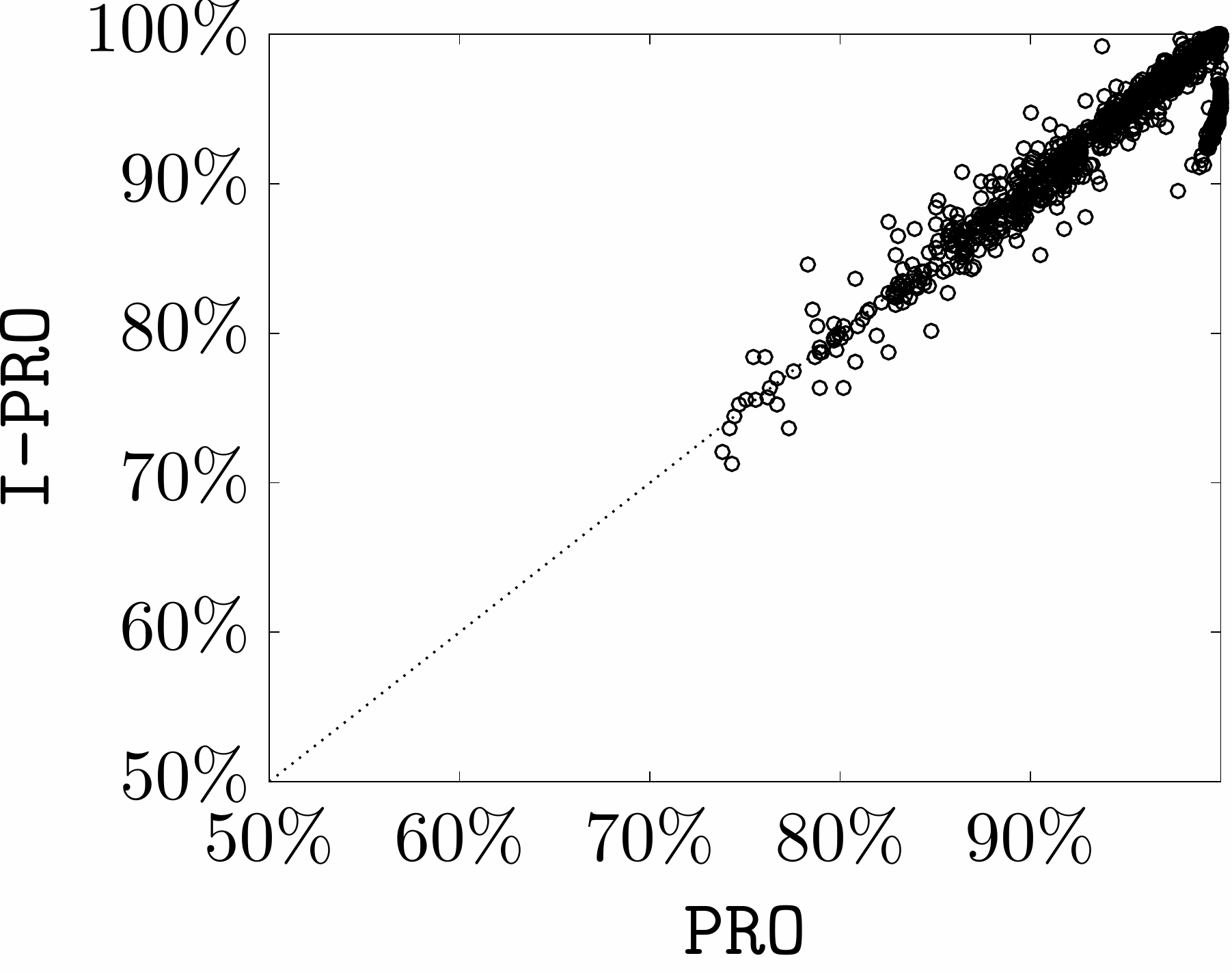}&
\includegraphics[width=0.32\textwidth]{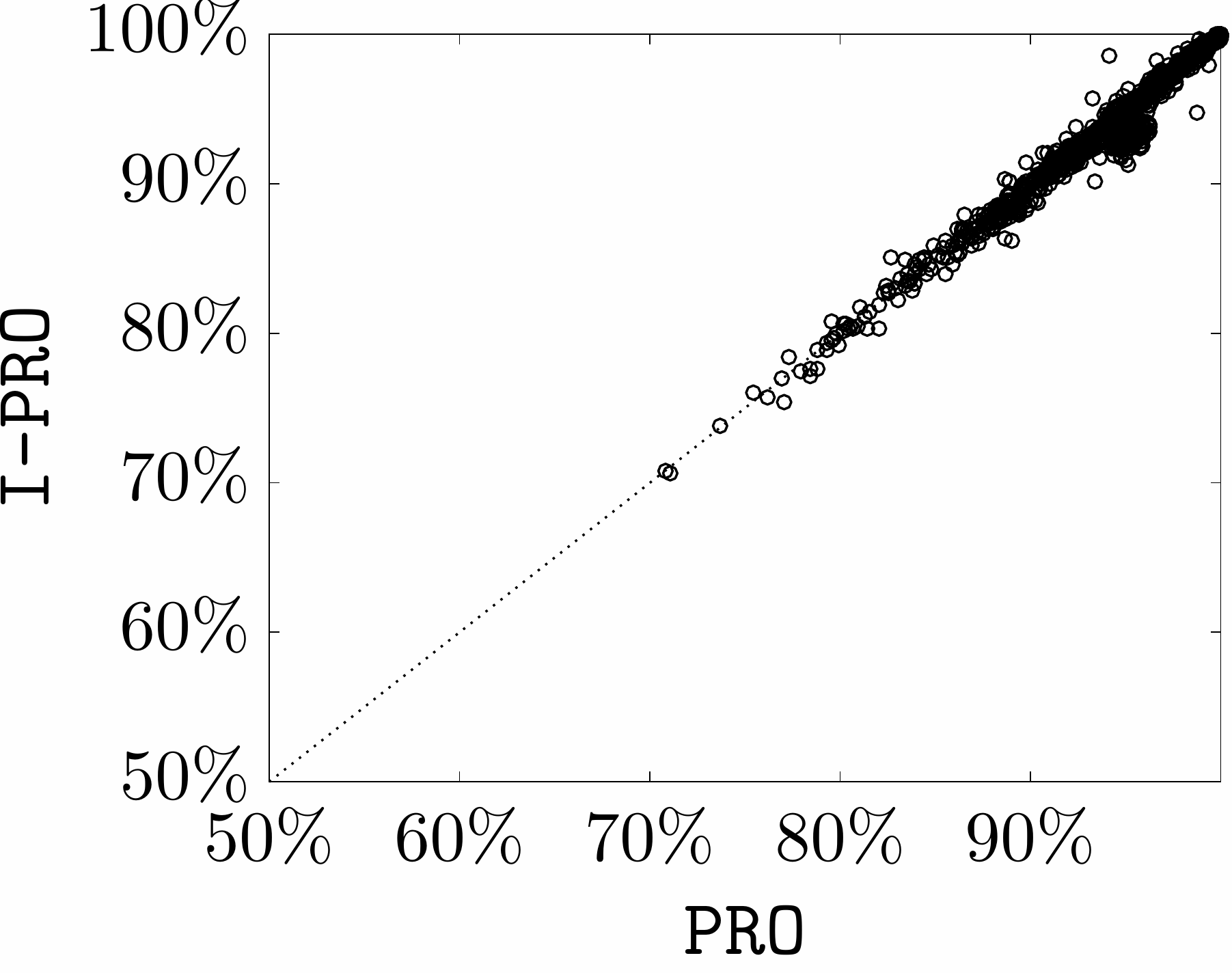}&
\includegraphics[width=0.32\textwidth]{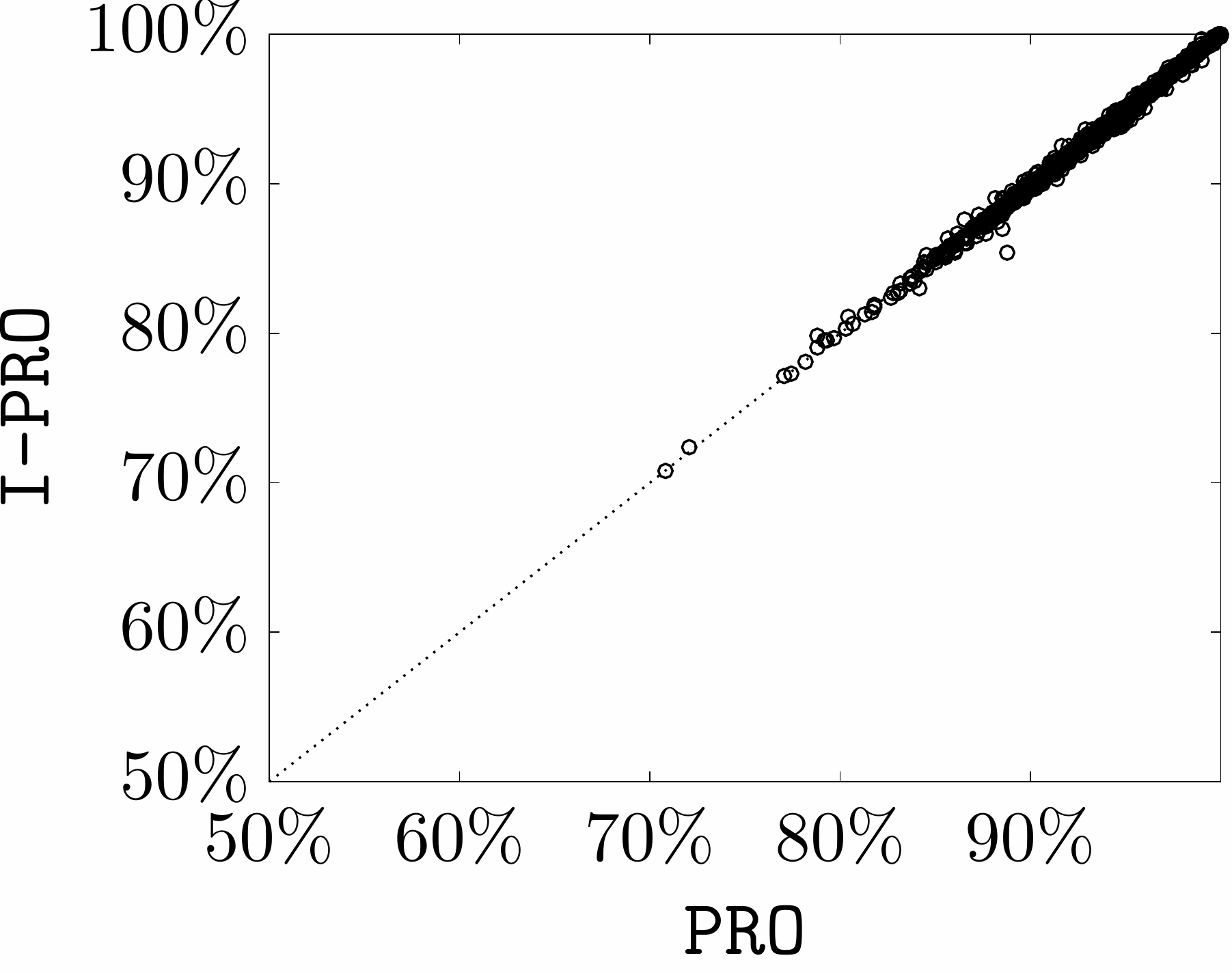}\\[4pt]
\includegraphics[width=0.32\textwidth]{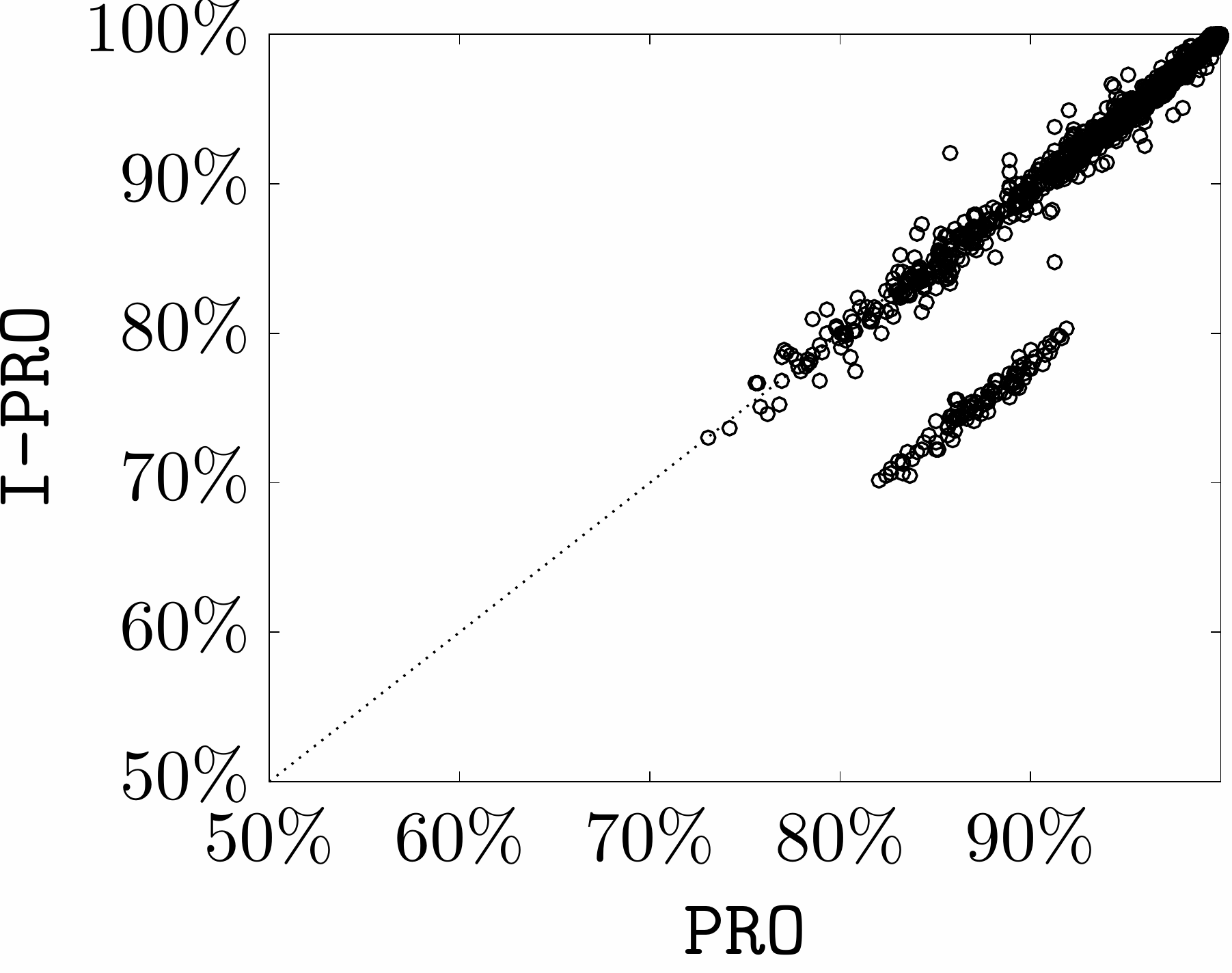}&
\includegraphics[width=0.32\textwidth]{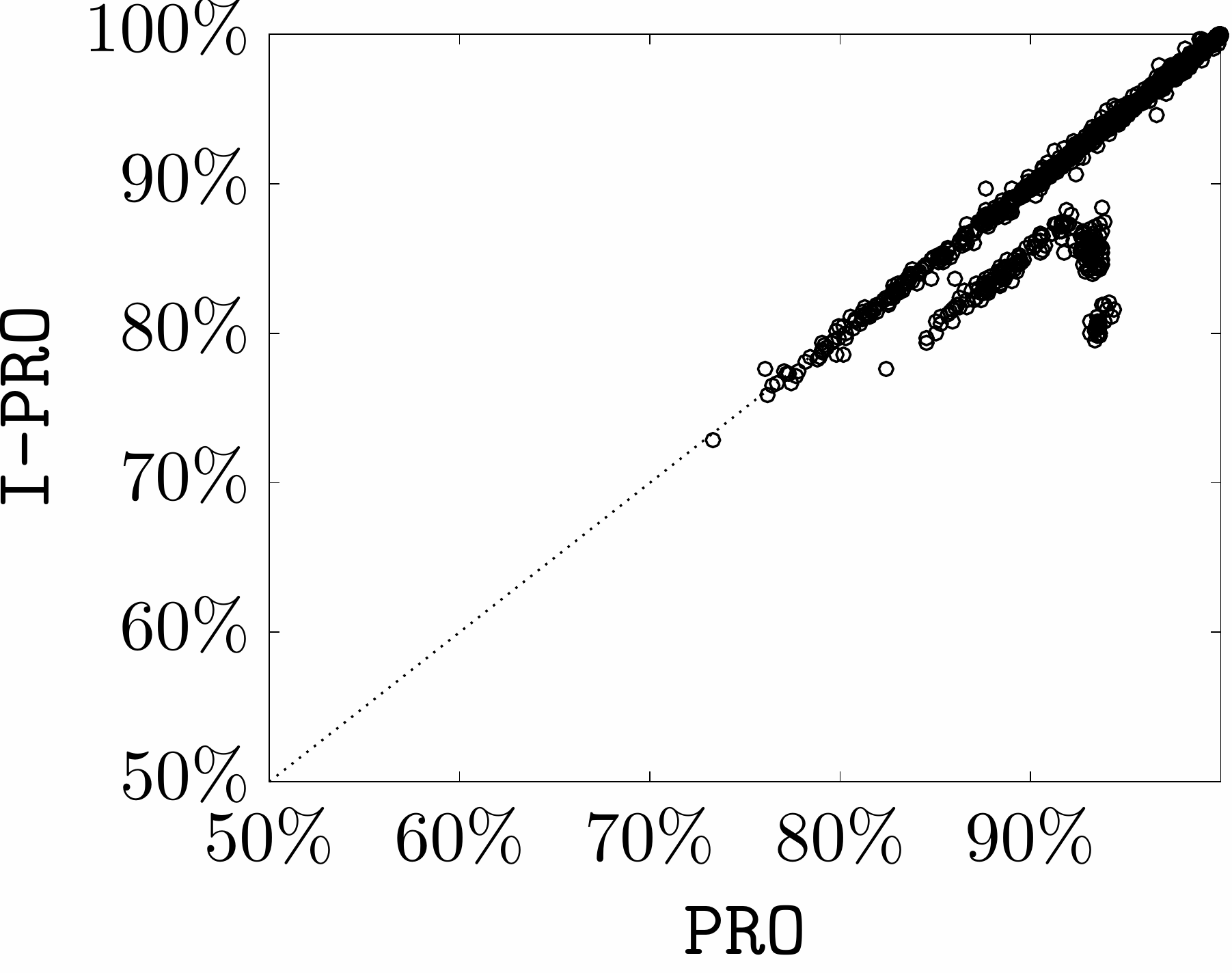}&
\includegraphics[width=0.32\textwidth]{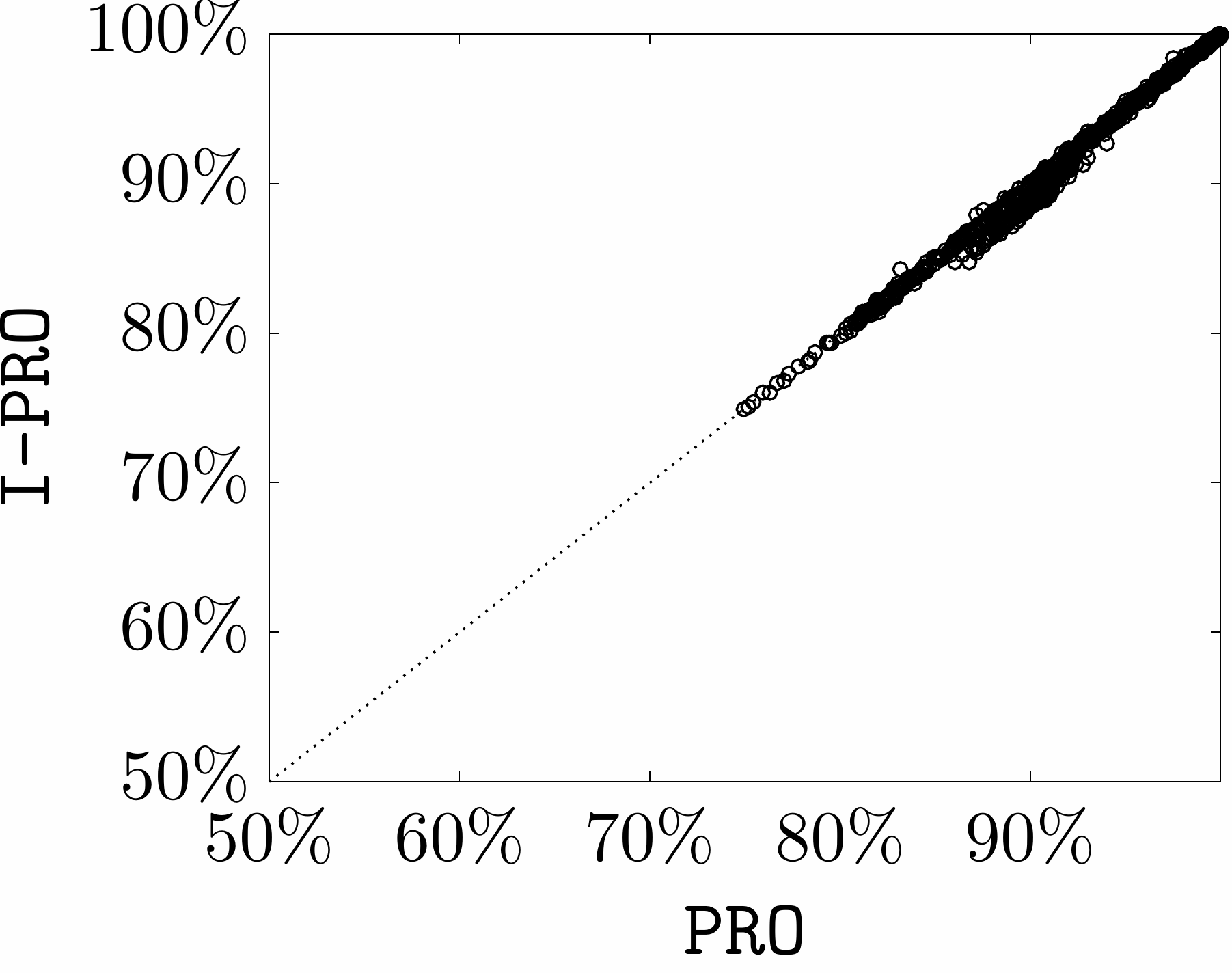}\\[4pt]
(a) $n=64$ & (b) $n=256$ & (c) $n=1024$
\end{tabular}
\caption{Scatter plots of the efficiency of \texttt{PRO} and \texttt{I-PRO}.
The top, middle and bottom rows are for $\xi=10$, $\xi=20$ and $\xi=40$, respectively.\label{scatter-plots-ipro}}
\end{figure}

In Fig. \ref{scatter-plots-ipro}, we show the scattered plots of the efficiency of \texttt{PRO} and
\texttt{I-PRO}. The \texttt{I-PRO} estimates are mostly close to that of \texttt{PRO}, except outliers for \texttt{I-PRO}
on the well-conditioned problem, i.e., \textit{heat(5)}, which  concurs with the observation from Tables \ref{table:1}
and \ref{table:2}. \texttt{I-PRO} can also provide an estimation on SNR, which is generally very close to the
true values. The precise mechanism is unclear but it essentially underpins the reliability of
\texttt{I-PRO} for parameter choice when $\sigma^2$ is unknown.

\subsection{Large-scale problems}

Now we illustrate the proposed \texttt{PRO} and \texttt{I-PRO} on three tomography examples from public package AIR tools, i.e.,
{\it paralleltomo} (parallel beam tomography), {\it fanbeamtomo} (fan beam tomography) and {\it seismictomo} (seismic tomography)
(available at \url{http://people.compute.dtu.dk/pcha/AIRtoolsII/index.html},
accessed on July 1, 2019). The problem parameters are taken as follows: for {\it paralleltomo}
and {\it fanbeamtomo}, the number of discretization intervals in both dimensions is fixed at $\ell=128$ (i.e., the
domain consists of $\ell^2$ cells), $175$ parallel rays for each angle $\theta\in\{0,1,...,179\}$; For {\it seismictomo},
we consider the same discretization $\ell=128$ with $128$ sources and $175$ receivers (seismographs).
See \cite{Hansen20122167} for a complete description of these problems. For each example, we consider three noise
levels (with SNR $\xi=10,20,30$), each with $50$ realizations $\eta_i$, $i=1,\ldots,50$, i.i.d. Gaussian noise
with mean zero and variance $\sigma^2$. See Fig. \ref{three_data} for exemplary noisy sinograms.
In the implementation, for \texttt{PRO}, we take $100$ $\alpha$ values equidistributed in a
logarithmic scale over the interval $(10^{-8}, 10^{-2})\|A\|^2/2$ (as prescribed by Proposition
\ref{lemma}) to find the minimizer of the function $T_{(\rho^2,\sigma^2)}(\alpha)$ (over the sample points), and finally compute
the reconstructions using LSQR \cite{paige1982lsqr}. Meanwhile, \texttt{I-PRO} implements
the fixed point iteration given in the \texttt{I-PRO} algorithm, initialized at the value $\alpha_1:=\alpha^{(50)}$ (from the samples).
In all experiments below, the algorithm converges in less than 10 iterations.

\begin{figure}[H]
\setlength{\tabcolsep}{2pt}
\begin{tabular}{ccc}
\includegraphics[width=0.32\textwidth]{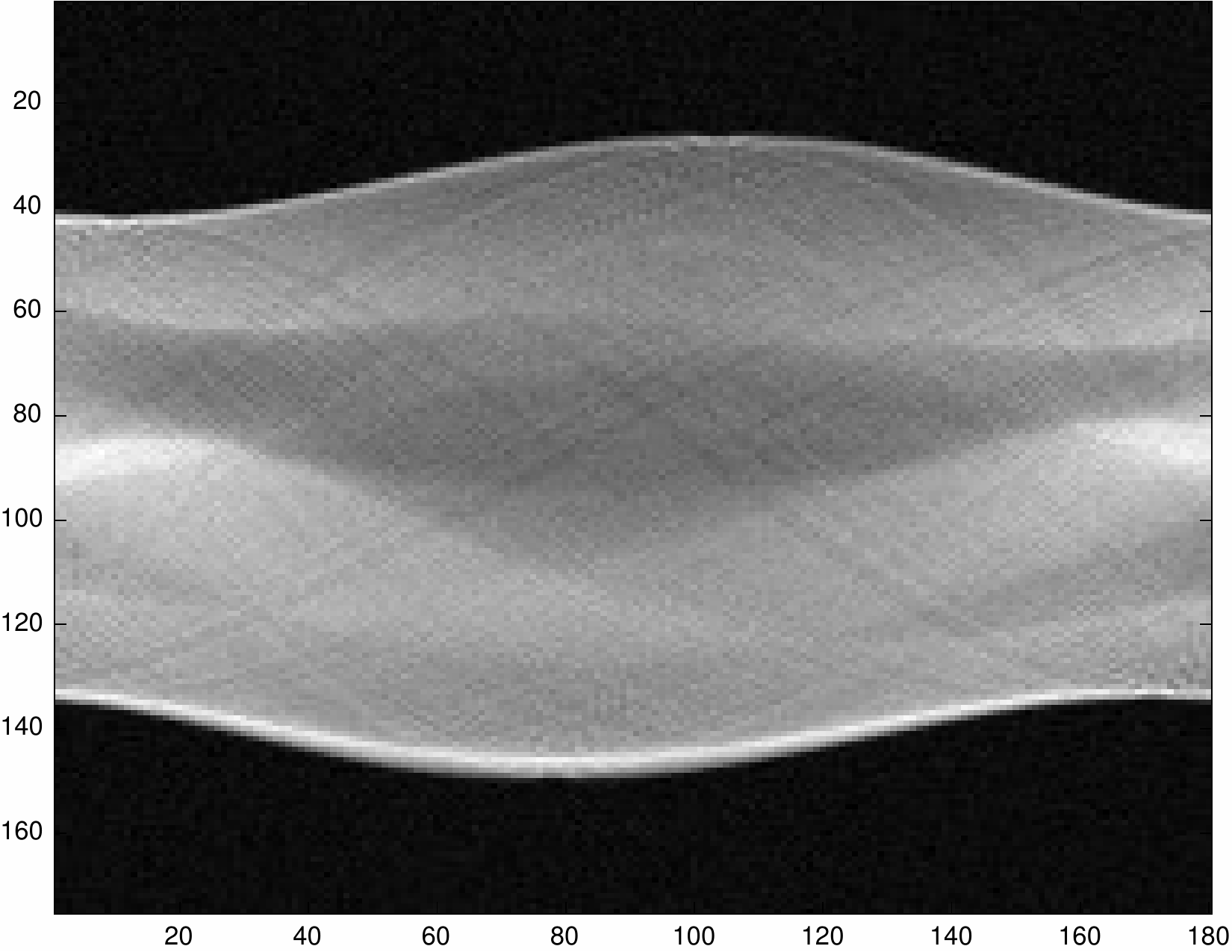}&
\includegraphics[width=0.32\textwidth]{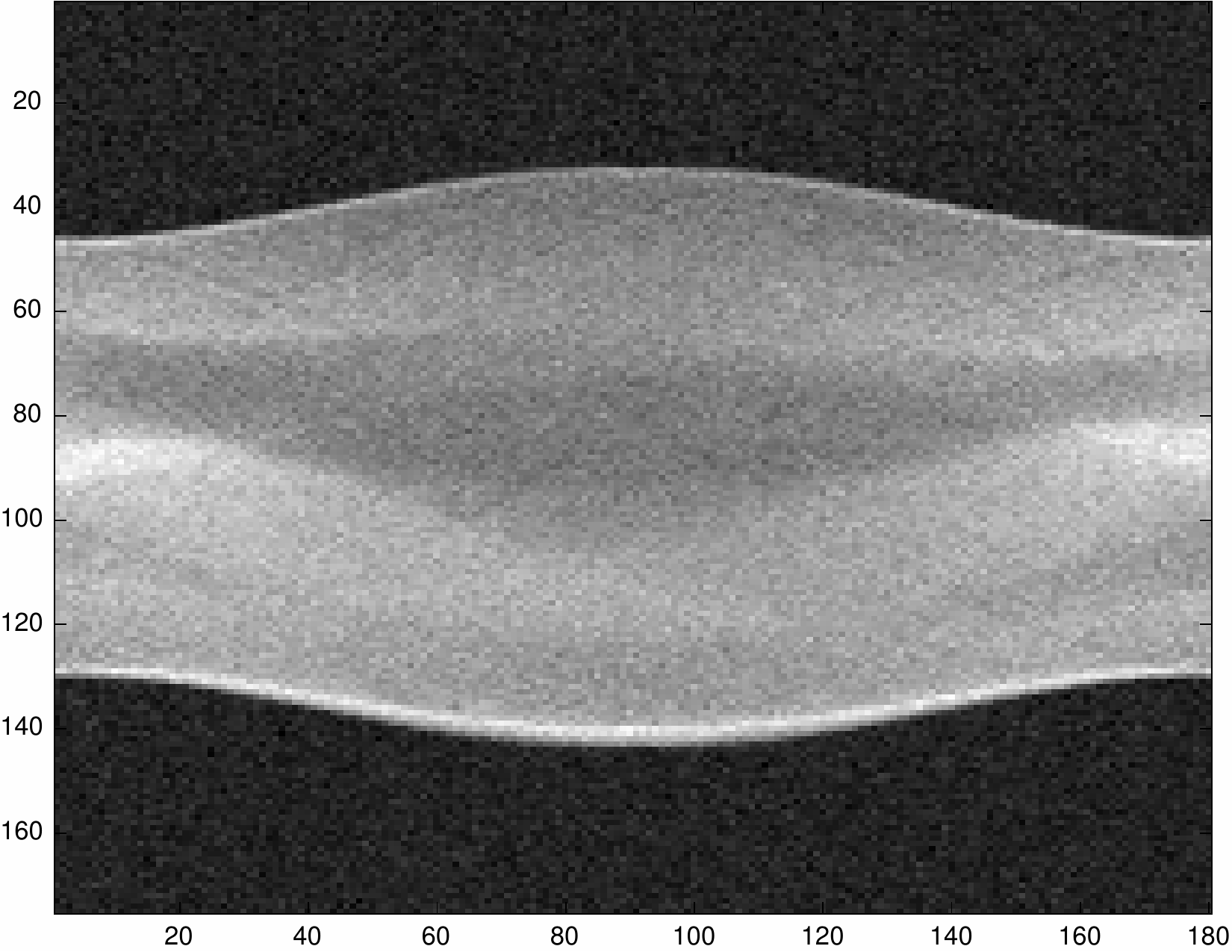}&
\includegraphics[width=0.315\textwidth]{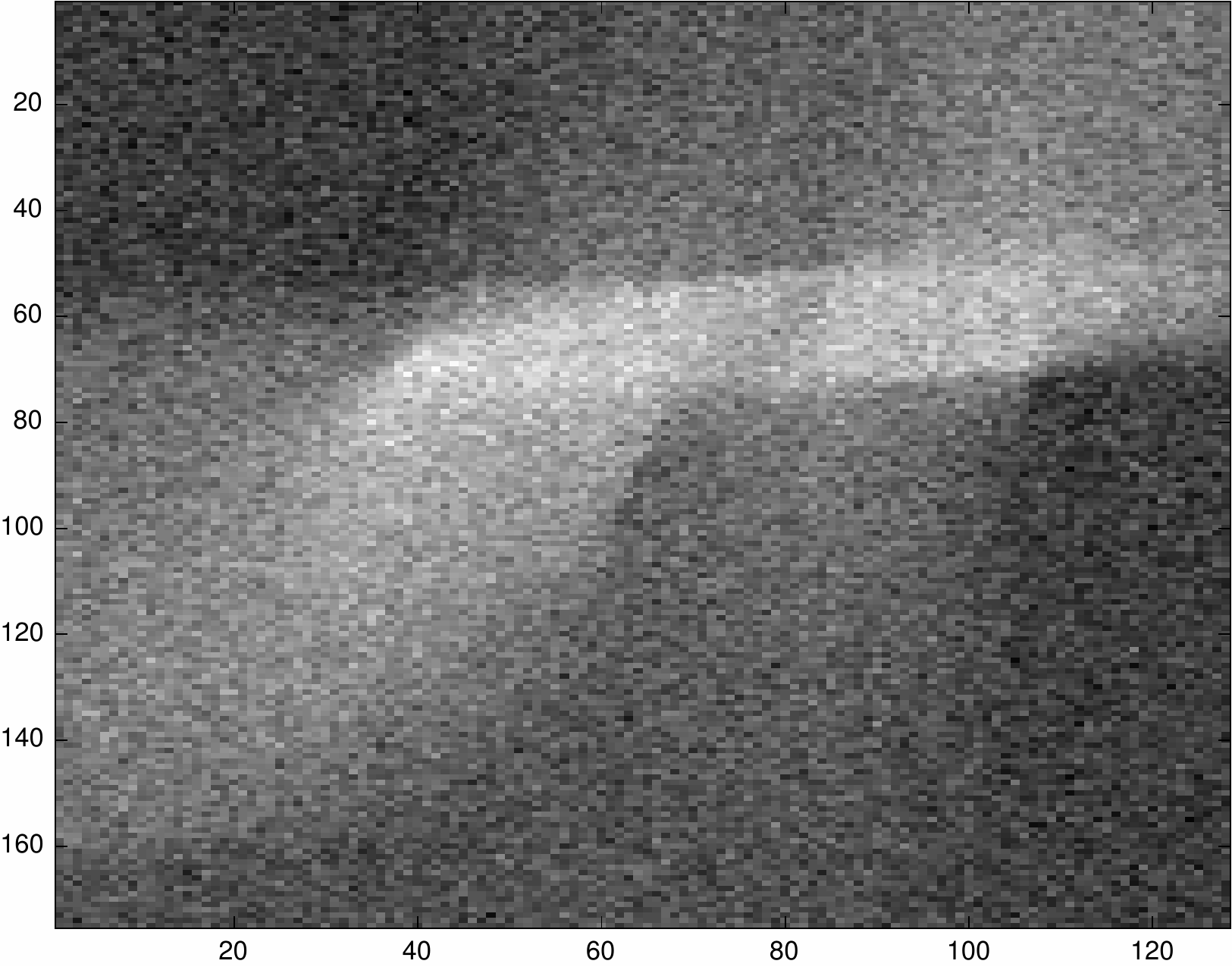}\\
(a) \textit{fanbeamtomo}, $\xi=30$ & (b) \textit{paralleltomo}, $\xi=20$ & (c) \textit{seismictomo}, $\xi=10$
\end{tabular}
\caption{Sinograms of tomography examples at various SNRs. \label{three_data}}
\end{figure}

Since both terms of $T_{(\rho^2,\sigma^2)}(\alpha)$ depend only on the matrix $A$, they can be precomputed.
In Fig. \ref{reg_path_approximation_amplification_error}(a), we show the two terms $s_n(I-X_\alpha)^2$ and
$\|X_\alpha\|_F^2$ as a function of the regularization parameter $\alpha$ for \textit{paralleltomo}, and in Fig.
\ref{reg_path_approximation_amplification_error}(b),  the sequence of four intermediate objective functions
$T_{\hat\rho^2,\hat\sigma^2}(\alpha)$ by the \texttt{I-PRO} algorithm. The iterative process returns an $\alpha$ value close to the \texttt{PRO} value.

\begin{figure}
\centering
\begin{tabular}{cc}
\includegraphics[width=0.44\textwidth]{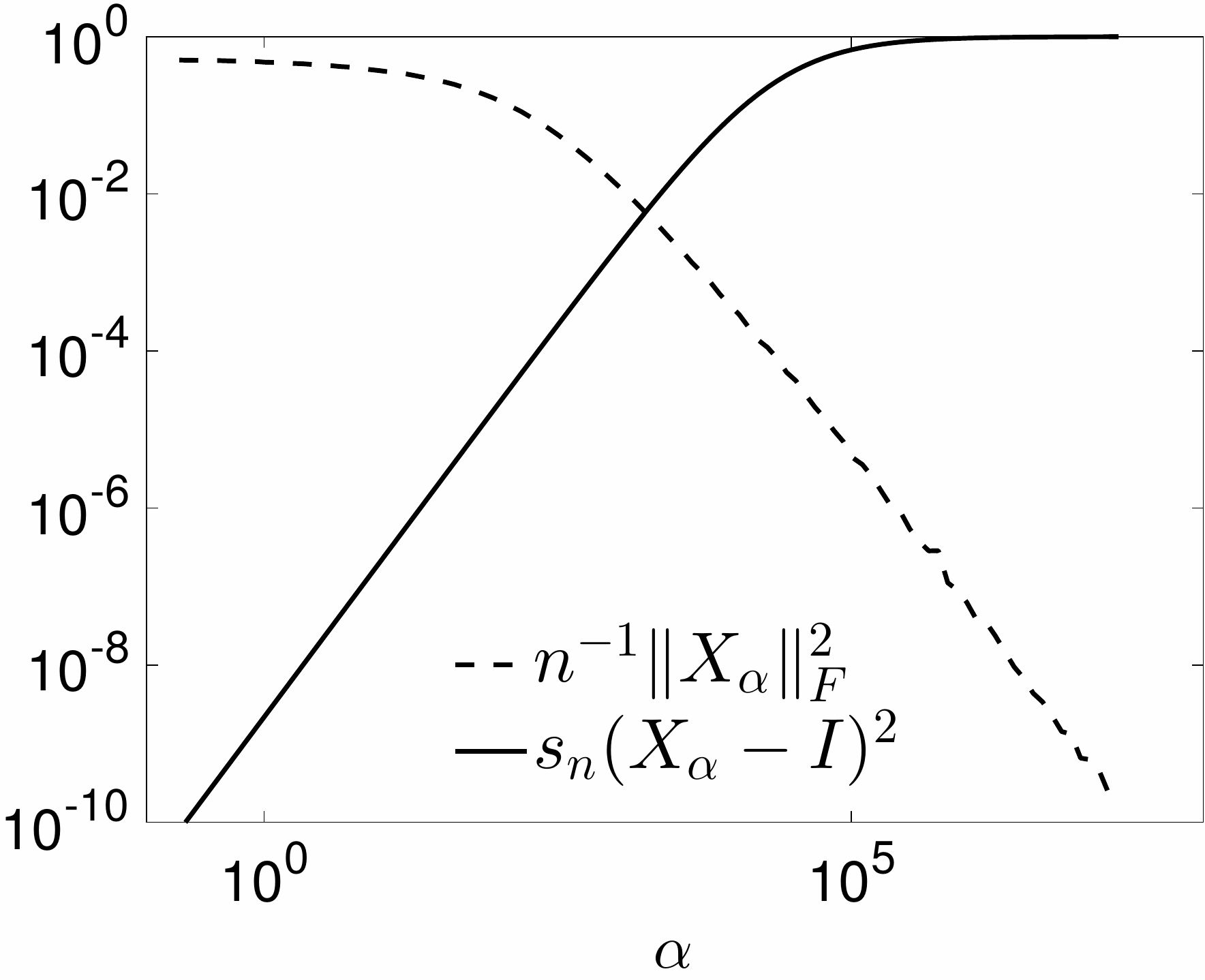}&
\includegraphics[width=0.44\textwidth]{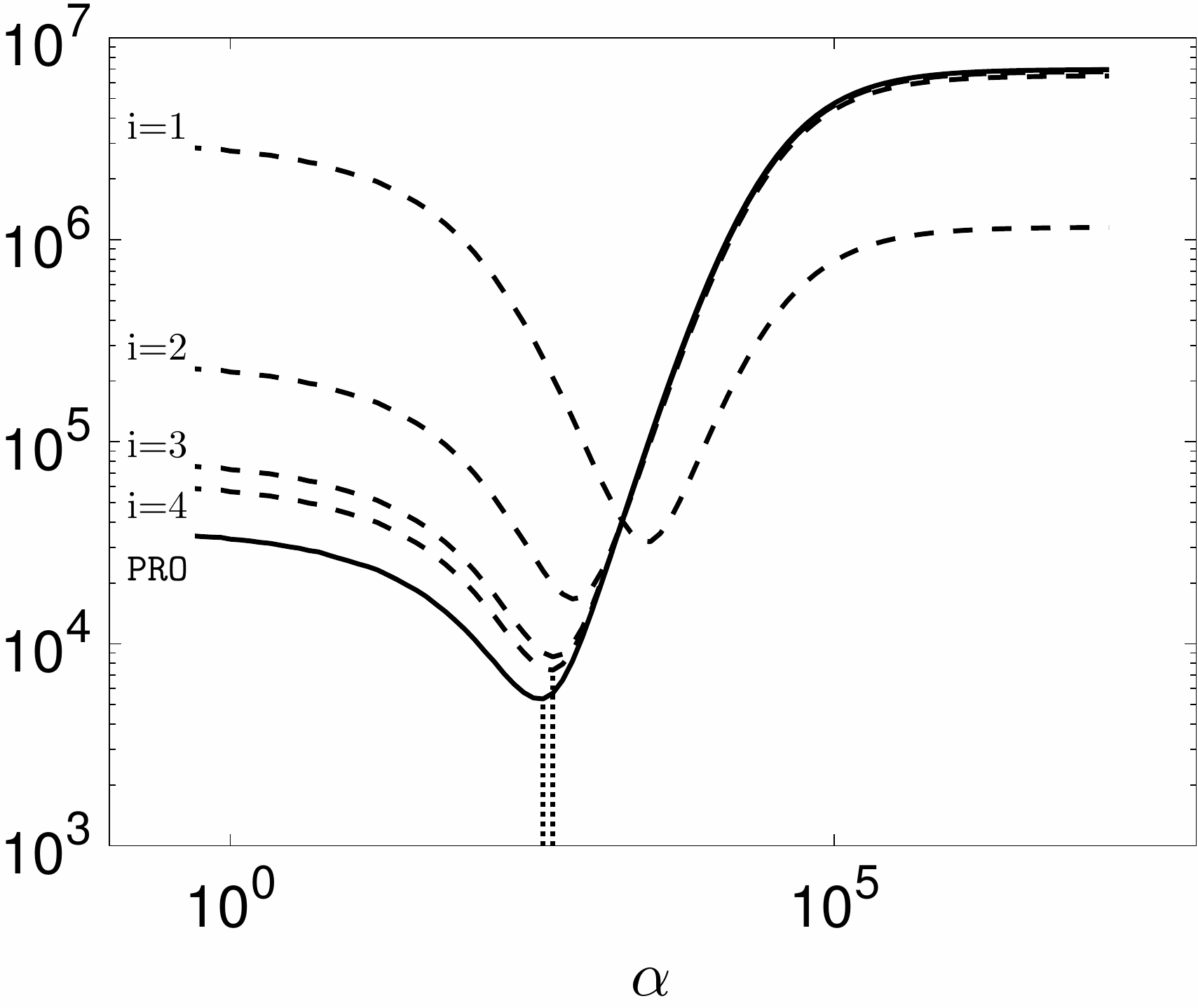}\\
(a) & (b)
\end{tabular}
\caption{(a) The noise amplification error $n^{-1}\|X_\alpha\|_F^2$ and approximation error $s_n(I-X_\alpha)^2$ (lower bound) versus $\alpha$ for \textit{paralleltomo}.
(b): $T_{(\rho^2,\sigma^2)}(\alpha)$ (solid line) and its successive approximations $T_{(\hat{\rho^2},\hat{\sigma^2})}(\alpha)$ with \texttt{I-PRO}. \texttt{I-PRO} converges after four iterations (i.e., $i=4$). The minimizers of these two functions
provide the {\tt PRO} and {\tt I-PRO} solutions showed in the left panels of Fig. \ref{six_reconstructions}.}
\label{reg_path_approximation_amplification_error}
\end{figure}

\begin{figure}[!h]
\includegraphics[width=0.24\textwidth]{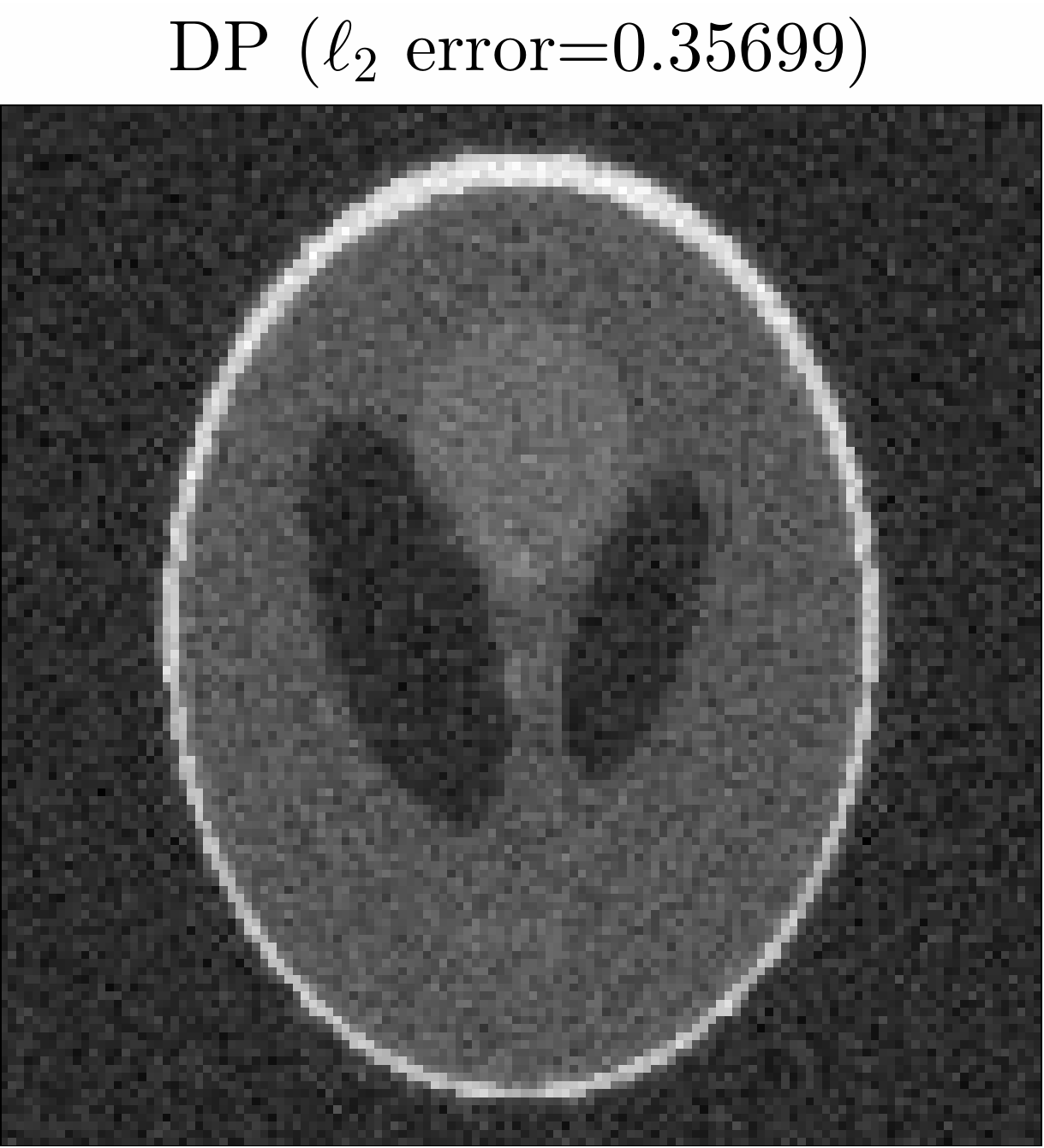}
\includegraphics[width=0.24\textwidth]{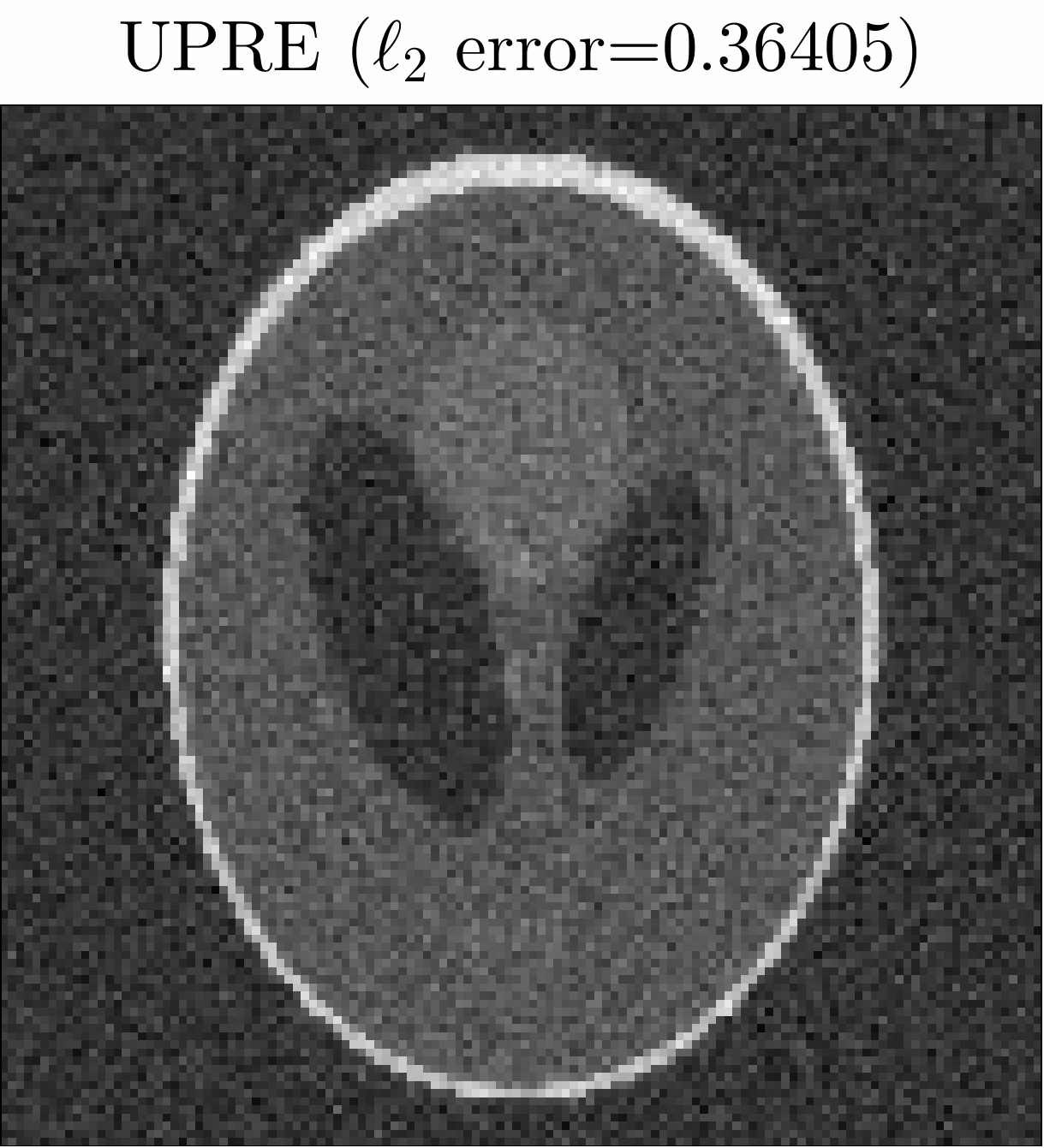}
\includegraphics[width=0.24\textwidth]{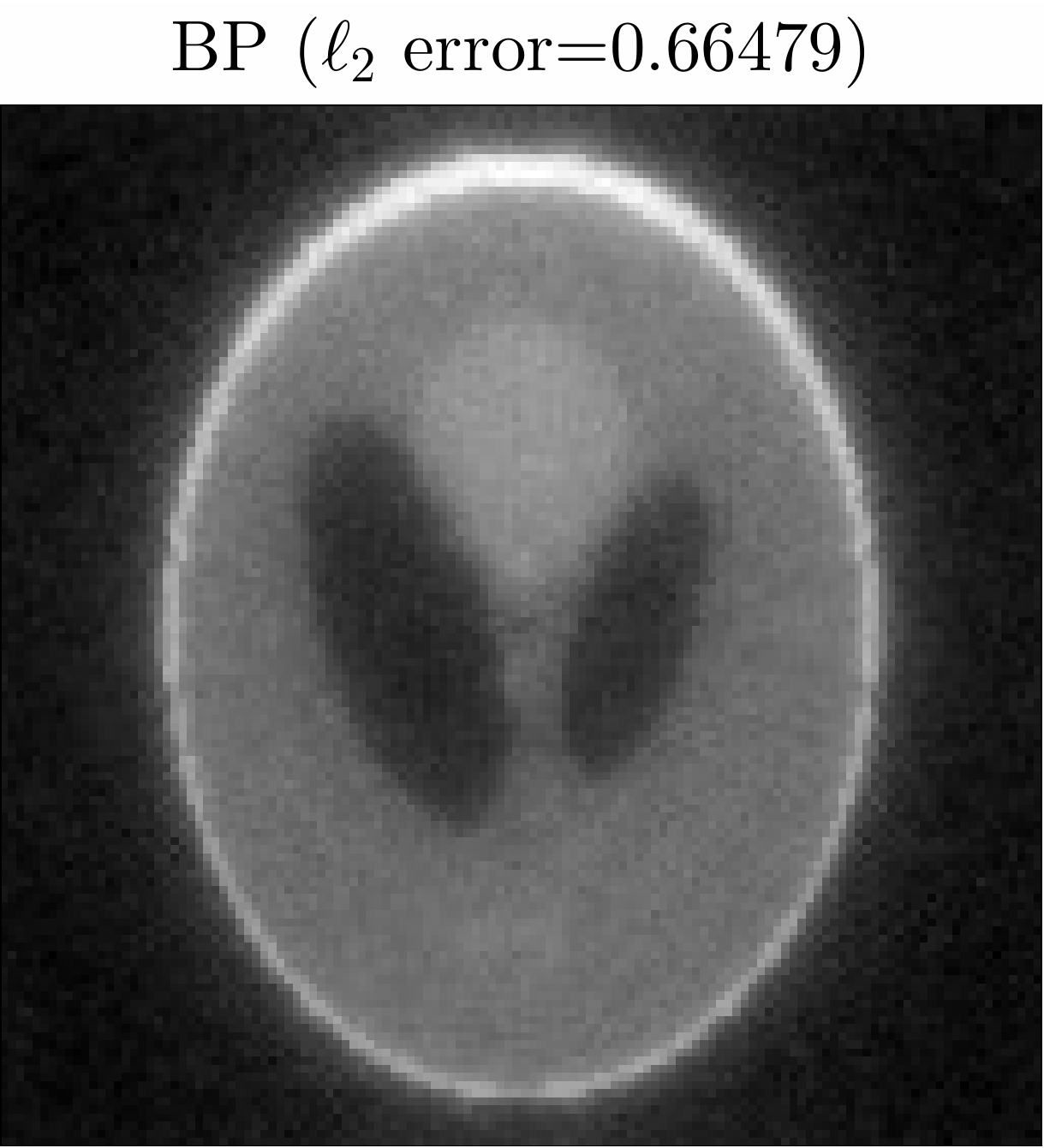}
\includegraphics[width=0.24\textwidth]{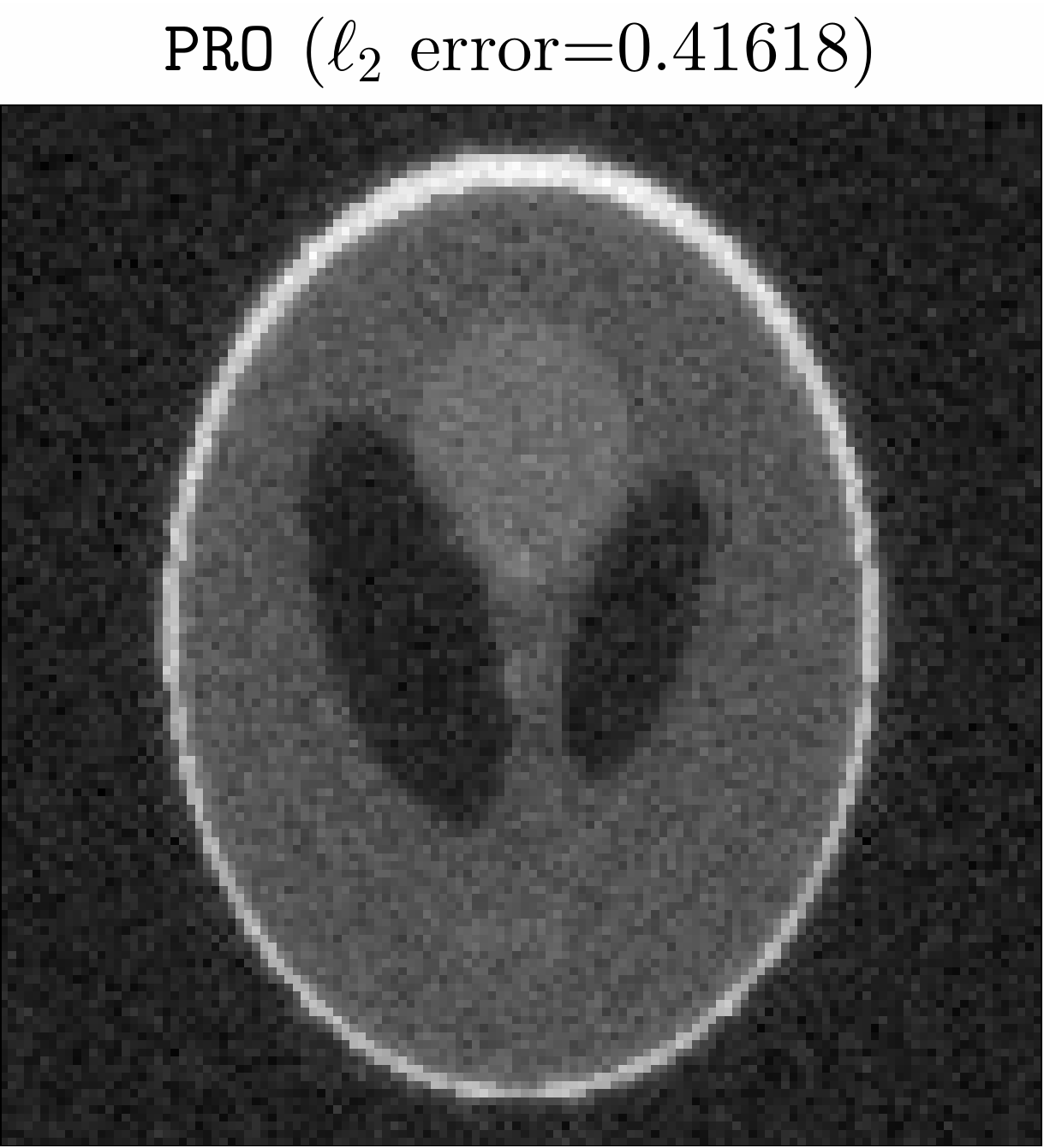}\\
\includegraphics[width=0.24\textwidth]{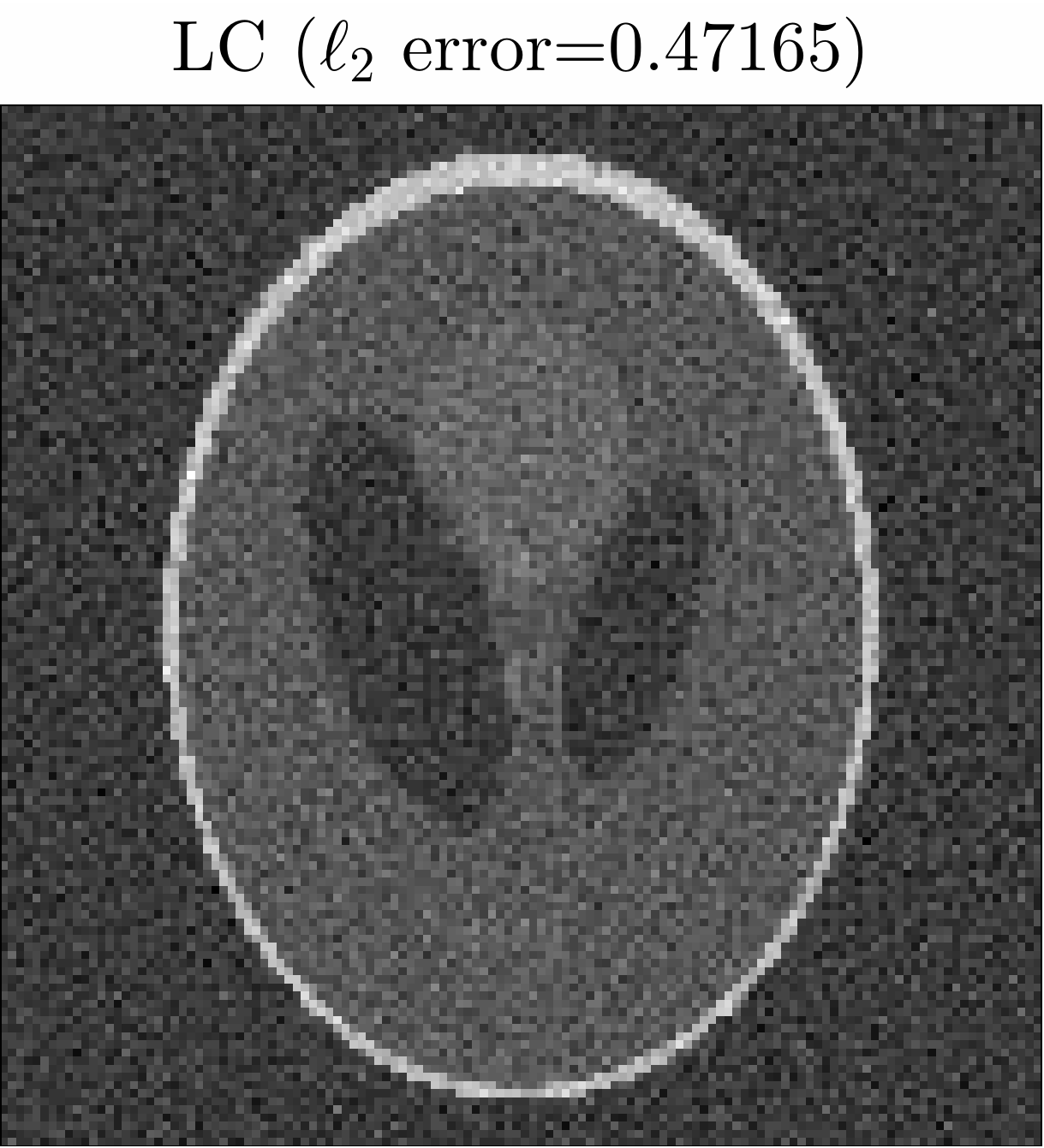}
\includegraphics[width=0.24\textwidth]{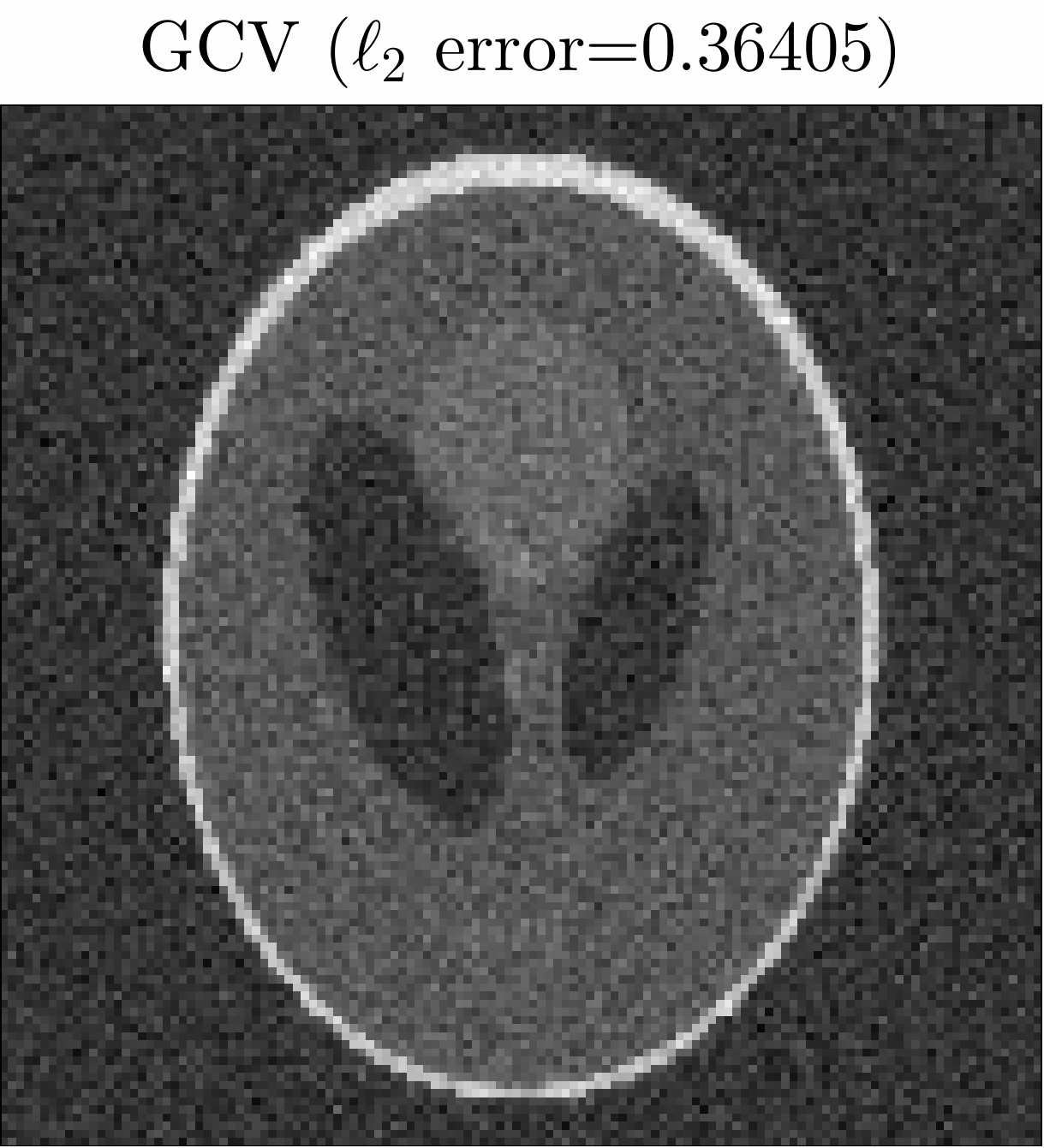}
\includegraphics[width=0.24\textwidth]{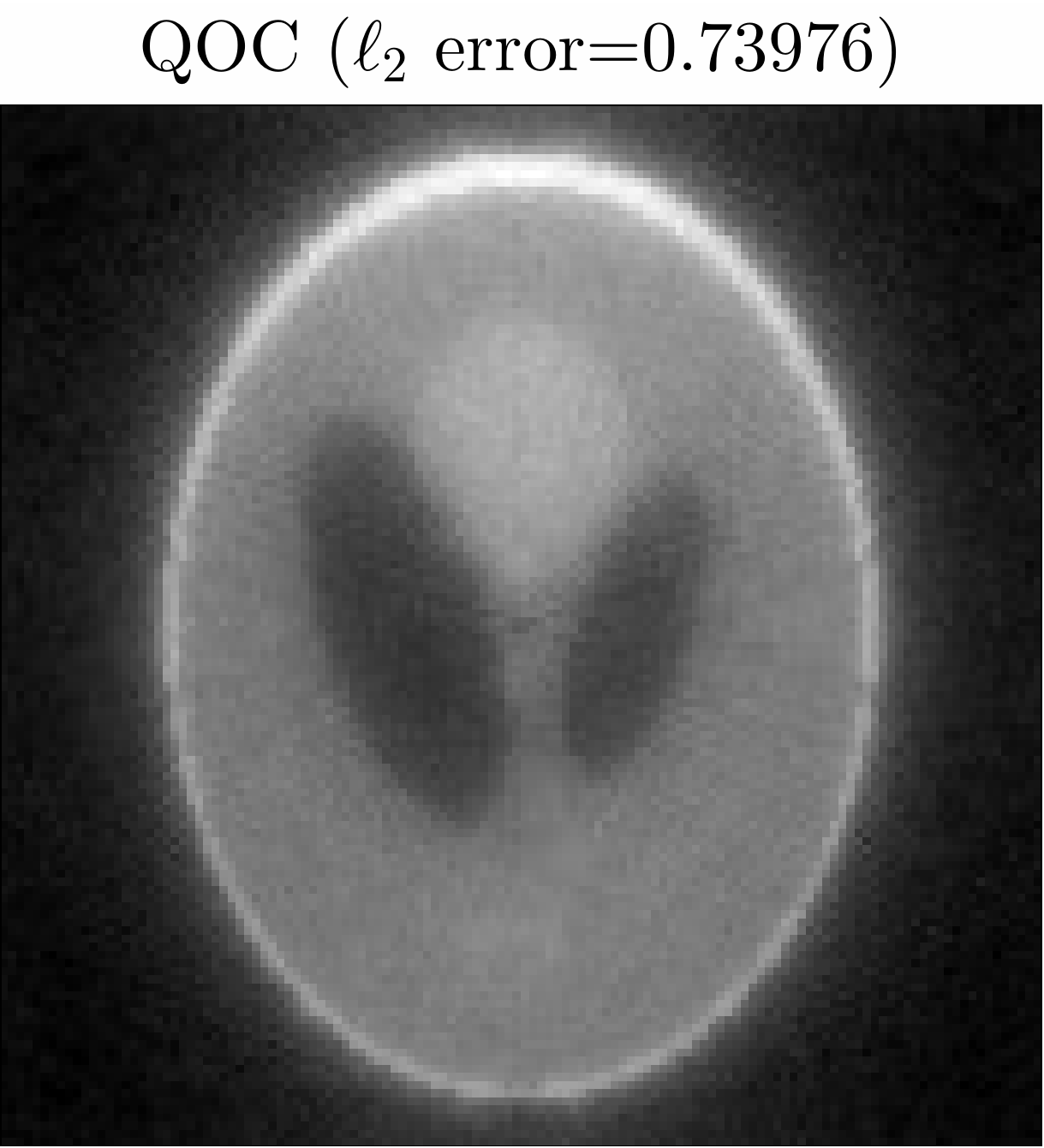}
\includegraphics[width=0.24\textwidth]{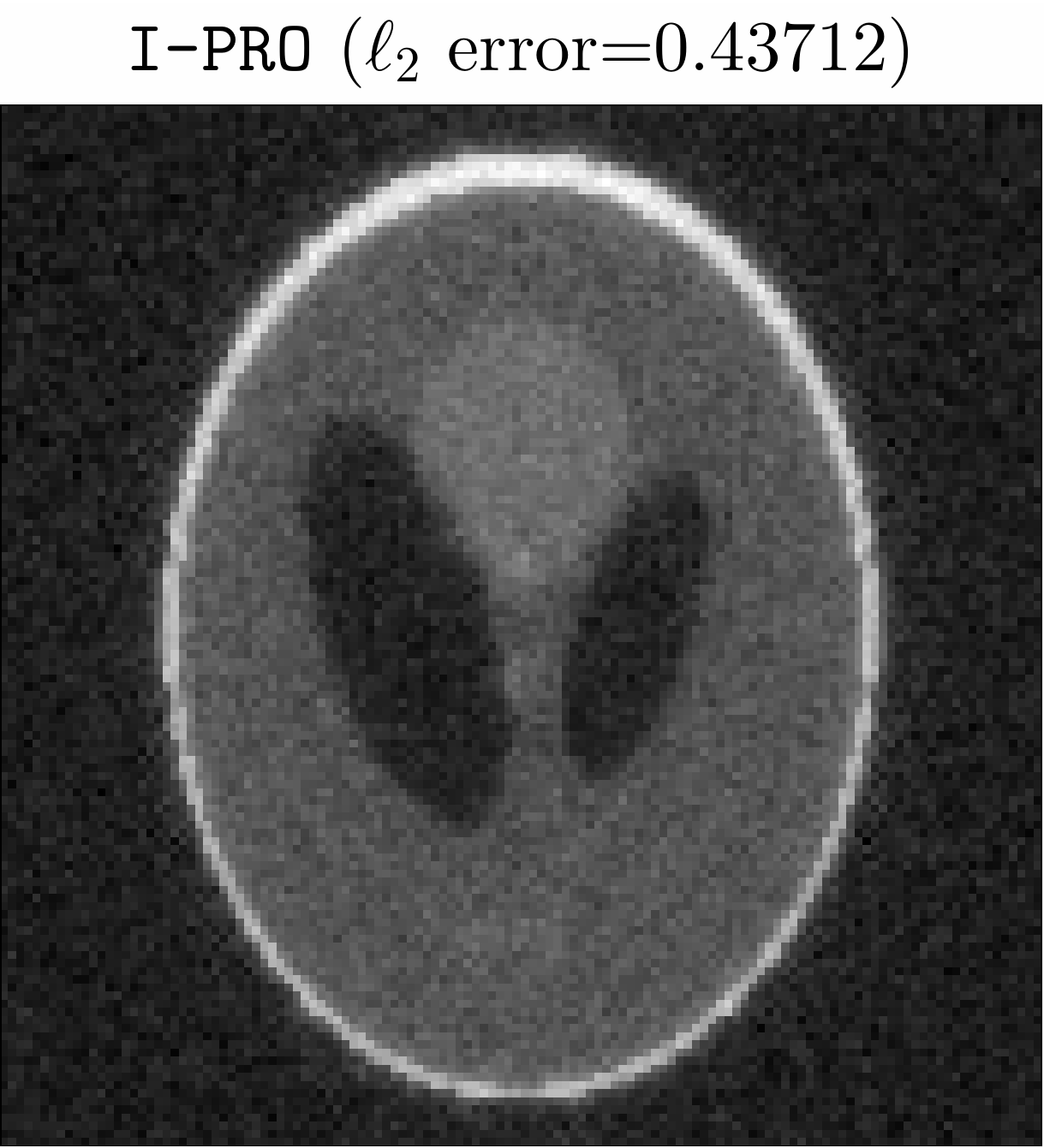}
\caption{Reconstructions for \textit{paralleltomo}, for the noisy sinogram showed in Fig. \ref{three_data}(b). \label{six_reconstructions}}
\end{figure}

Due to the large number of cells ($\ell^2 \simeq 1.6\times10^4$), the variance of the reconstructions with respect to
the noise realization is very small. In Table \ref{large-scale-table}, we show the efficiency of the methods.
For all three examples, all eight rules can choose a suitable regularization parameter,
and in terms of efficiency, \texttt{PRO} and \texttt{I-PRO} perform better than existing methods when SNR
is low and slightly worse when SNR is high, for {\it paralleltomo} and {\it fanbeamtomo}.
This is attributed to the fact that by construction, \texttt{PRO} and \texttt{I-PRO} provide conservative
estimates of the optimal parameter (with respect to predictive risk). In Fig. \ref{six_reconstructions}, we
show exemplary reconstructions for \textit{paralleltomo}.

\begin{table}
\centering
\resizebox{\columnwidth}{!}{%
\begin{tabular}{ l c | r  rrrr rrrr }
\toprule
problem &  stat & \multicolumn{1}{c}{$\ell_2$ error}  &  \multicolumn{4}{c}{$\sigma$ known, $\rho$ unknown}  & \multicolumn{4}{c}{$\sigma$ and $\rho$ unknowns} \\
\cmidrule(lr){3-3} \cmidrule(lr){4-7} \cmidrule(lr){8-11}
 & $\xi$ & \multicolumn{1}{c}{Oracle} & \multicolumn{1}{c}{DP} &  \multicolumn{1}{c}{UPRE} & \multicolumn{1}{c}{BP} &  \multicolumn{1}{c}{\tt{PRO}}  &  \multicolumn{1}{c}{{LC}}  & \multicolumn{1}{c}{GCV} &  \multicolumn{1}{c}{QOC}  & \multicolumn{1}{c}{\tt{I-PRO}} \\
\midrule
seismic & $10$ & 0.35 & 99.8$\%$,  & 74.5$\%$,  & 41.2$\%$,  & 93.5$\%$,  & 87.9$\%$,  & 74.5$\%$,  & 93.2$\%$,  & 93.5$\%$\\
& $20$ & 0.24 & 99.4$\%$,  & 80.0$\%$,  & 74.8$\%$,  & 99.4$\%$,  & 48.5$\%$,  & 80.0$\%$,  & 99.1$\%$,  & 99.4$\%$\\
& $30$ & 0.17 & 100.0$\%$,  & 90.9$\%$,  & 93.9$\%$,  & 99.6$\%$,  & 43.5$\%$,  & 84.1$\%$,  & 99.6$\%$,  & 99.6$\%$\\
\midrule
fanbeam & $10$ & 0.55 & 99.8$\%$,  & 88.6$\%$,  & 89.2$\%$,  & 98.7$\%$,  & 99.0$\%$,  & 88.6$\%$,  & 72.3$\%$,  & 98.7$\%$\\
& $20$ & 0.37 & 98.8$\%$,  & 94.9$\%$,  & 55.9$\%$,  & 87.2$\%$,  & 75.2$\%$,  & 94.9$\%$,  & 51.6$\%$,  & 83.5$\%$\\
& $30$ & 0.24 & 96.6$\%$,  & 97.8$\%$,  & 33.9$\%$,  & 76.9$\%$,  & 82.9$\%$,  & 97.8$\%$,  & 96.5$\%$,  & 61.5$\%$\\
\midrule
parallel & $10$ & 0.54 & 99.4$\%$,  & 90.0$\%$,  & 90.2$\%$,  & 97.9$\%$,  & 99.4$\%$,  & 90.0$\%$,  & 70.5$\%$,  & 97.9$\%$\\
& $20$ & 0.34 & 97.4$\%$,  & 95.8$\%$,  & 51.9$\%$,  & 83.3$\%$,  & 73.8$\%$,  & 95.8$\%$,  & 46.6$\%$,  & 79.2$\%$\\
& $30$ & 0.19 & 94.1$\%$,  & 93.9$\%$,  & 27.0$\%$,  & 68.2$\%$,  & 71.3$\%$,  & 93.9$\%$,  & 98.2$\%$,  & 52.1$\%$\\
\bottomrule
\end{tabular}}
\caption{Performance comparison between the choice rules on the AIR tools dataset.
The first column indicates the problem, the second gives the SNR of the data, the third shows the oracle $\ell_2$-norm error, $\varepsilon_o$, and the 4th--11th columns: the median efficiency of the methods.\label{large-scale-table}}
\end{table}

\section{Conclusion}\label{sec:concl}
In this work, we have proposed a new criterion, termed as \texttt{PRO}, to choose the crucial Tikhonov regularization
parameter for discrete linear inverse problems. It is based on minimizing a lower bound of the predictive risk, and can handle
effectively both known and unknown noise levels. In the latter case, we proposed an iterative scheme, i.e.,
\texttt{I-PRO}, which alternates between estimating the noise level and minimizing the predictive risk. Extensive
numerical simulations show that both \texttt{PRO} and \texttt{I-PRO} are not only competitive with six existing choice
rules including discrepancy principle, unbiased predictive risk estimator, balancing principle, generalized cross
validation, L-curve criterion and quasi-optimality criterion) in terms of accuracy, but also can yield more stable results
in terms of reliability for small-sized samples. Moreover, the methods apply also to large-scale inverse problems, and
can produce solutions with improved accuracy for data with low signal-to-noise ratio. One preliminary
theoretical analysis was provided to show several interesting properties of \texttt{PRO} and \texttt{I-PRO}, and
the compelling empirical results promote further analysis, especially regularizing property and convergence rates,
as well as developing extensions to other linear regularization techniques.

\section*{Acknowledgements}
The research leading to these results has received funding from the European Union's Horizon2020 research and innovation
programme under grant agreement  no.~640216. Federico Benvenuto thanks the National Group of Scientific Computing
(GNCS-INDAM) that supported this research. The work of B. Jin is partially supported by UK EPSRC EP/T000864/1.

\bibliographystyle{abbrv}
\bibliography{pro-3}

\begin{thebibliography}{10}

\bibitem{Avron:2011}
H.~Avron and S.~Toledo.
\newblock Randomized algorithms for estimating the trace of an implicit
  symmetric positive semi-definite matrix.
\newblock {\em J. ACM}, 58(2):Art. 8, 17, 2011.

\bibitem{BauerHohage:2005}
F.~Bauer and T.~Hohage.
\newblock A {L}epskij-type stopping rule for regularized {N}ewton methods.
\newblock {\em Inverse Problems}, 21(6):1975--1991, 2005.

\bibitem{BauerKindermann:2008}
F.~Bauer and S.~Kindermann.
\newblock The quasi-optimality criterion for classical inverse problems.
\newblock {\em Inverse Problems}, 24(3):035002, 20 pp., 2008.

\bibitem{bertero1998introduction}
M.~Bertero and P.~Boccacci.
\newblock {\em Introduction to {I}nverse {P}roblems in {I}maging}.
\newblock IOP Publishing, Bristol, 1998.

\bibitem{Craven1978}
P.~Craven and G.~Wahba.
\newblock Smoothing noisy data with spline functions. {E}stimating the correct
  degree of smoothing by the method of generalized cross-validation.
\newblock {\em Numer. Math.}, 31(4):377--403, 1978/79.

\bibitem{DelVaiter:2014}
C.-A. Deledalle, S.~Vaiter, J.~Fadili, and G.~Peyr\'{e}.
\newblock Stein {U}nbiased {G}r{A}dient estimator of the {R}isk ({SUGAR}) for
  multiple parameter selection.
\newblock {\em SIAM J. Imaging Sci.}, 7(4):2448--2487, 2014.

\bibitem{Engl}
H.~W. Engl, M.~Hanke, and A.~Neubauer.
\newblock {\em Regularization of {I}nverse {P}roblems}.
\newblock Kluwer, Dordrecht, 1996.

\bibitem{GalKat:1992}
N.~P. Galatsanos and A.~K. Katsaggelos.
\newblock Methods for choosing the regularization parameter and estimating the
  noise variance in image restoration and their relation.
\newblock {\em IEEE Trans. Imag. Proc.}, 1(3):322--336, 1992.

\bibitem{golub1979generalized}
G.~H. Golub, M.~Heath, and G.~Wahba.
\newblock Generalized cross-validation as a method for choosing a good ridge
  parameter.
\newblock {\em Technometrics}, 21(2):215--223, 1979.

\bibitem{HamarikPalmRaus:2009}
U.~H\"{a}marik, R.~Palm, and T.~Raus.
\newblock On minimization strategies for choice of the regularization parameter
  in ill-posed problems.
\newblock {\em Numer. Funct. Anal. Optim.}, 30(9-10):924--950, 2009.

\bibitem{HamarikRuas:2009}
U.~H\"{a}marik and T.~Raus.
\newblock About the balancing principle for choice of the regularization
  parameter.
\newblock {\em Numer. Funct. Anal. Optim.}, 30(9-10):951--970, 2009.

\bibitem{HankeRaus:1996}
M.~Hanke and T.~Raus.
\newblock A general heuristic for choosing the regularization parameter in
  ill-posed problems.
\newblock {\em SIAM J. Sci. Comput.}, 17(4):956--972, 1996.

\bibitem{hansen1987truncatedsvd}
P.~C. Hansen.
\newblock The truncated {SVD} as a method for regularization.
\newblock {\em BIT}, 27(4):534--553, 1987.

\bibitem{hansen1992analysis}
P.~C. Hansen.
\newblock Analysis of discrete ill-posed problems by means of the
  {${L}$}-curve.
\newblock {\em SIAM Rev.}, 34(4):561--580, 1992.

\bibitem{hansen1999regularization}
P.~C. Hansen.
\newblock {R}egularization tools version 3.0 for {M}atlab 5.2.
\newblock {\em Numerical Algor.}, 20(2):195--196, 1999.

\bibitem{HansenOLeary:1993}
P.~C. Hansen and D.~P. O'Leary.
\newblock The use of the {$L$}-curve in the regularization of discrete
  ill-posed problems.
\newblock {\em SIAM J. Sci. Comput.}, 14(6):1487--1503, 1993.

\bibitem{Hansen20122167}
P.~C. Hansen and M.~Saxild-Hansen.
\newblock A{IR}-tools---a {MATLAB} package of algebraic iterative
  reconstruction methods.
\newblock {\em J. Comput. Appl. Math.}, 236(8):2167--2178, 2012.

\bibitem{hutchinson1990stochastic}
M.~F. Hutchinson.
\newblock A stochastic estimator of the trace of the influence matrix for
  {L}aplacian smoothing splines.
\newblock {\em Comm. Statist. Simul. Comput.}, 19(2):433--450, 1990.

\bibitem{ItoJin:2015}
K.~Ito and B.~Jin.
\newblock {\em Inverse {P}roblems: {T}ikhonov {T}heory and {A}lgorithms}.
\newblock World Scientific Publishing Co. Pte. Ltd., Hackensack, NJ, 2015.

\bibitem{ItoJin:2019}
K.~Ito and B.~Jin.
\newblock Regularized linear inversion with randomized singular value
  decomposition.
\newblock Preprint, arXiv:1909.01947, 2019.

\bibitem{ItoJinTakeuchi:2011}
K.~Ito, B.~Jin, and T.~Takeuchi.
\newblock A regularization parameter for nonsmooth {T}ikhonov regularization.
\newblock {\em SIAM J. Sci. Comput.}, 33(3):1415--1438, 2011.

\bibitem{JinLorenz:2010}
B.~Jin and D.~A. Lorenz.
\newblock Heuristic parameter-choice rules for convex variational
  regularization based on error estimates.
\newblock {\em SIAM J. Numer. Anal.}, 48(3):1208--1229, 2010.

\bibitem{Kindermann:2011}
S.~Kindermann.
\newblock Convergence analysis of minimization-based noise level-free parameter
  choice rules for linear ill-posed problems.
\newblock {\em Electron. Trans. Numer. Anal.}, 38:233--257, 2011.

\bibitem{KindermannNeubauer:2008}
S.~Kindermann and A.~Neubauer.
\newblock On the convergence of the quasioptimality criterion for (iterated)
  {T}ikhonov regularization.
\newblock {\em Inverse Probl. Imaging}, 2(2):291--299, 2008.

\bibitem{KindermannPereverzyev:2018}
S.~Kindermann, S.~Pereverzyev, Jr., and A.~Pilipenko.
\newblock The quasi-optimality criterion in the linear functional strategy.
\newblock {\em Inverse Problems}, 34(7):075001, 24, 2018.

\bibitem{landweber1951iteration}
L.~Landweber.
\newblock An iteration formula for {F}redholm integral equations of the first
  kind.
\newblock {\em Amer. J. Math.}, 73:615--624, 1951.

\bibitem{Lepskii:1990}
O.~V. Lepski\u{\i}.
\newblock A problem of adaptive estimation in {G}aussian white noise.
\newblock {\em Teor. Veroyatnost. i Primenen.}, 35(3):459--470, 1990.

\bibitem{LevinMeltzer:2017}
E.~Levin and A.~Y. Meltzer.
\newblock Estimation of the regularization parameter in linear discrete
  ill-posed problems using the {P}icard parameter.
\newblock {\em SIAM J. Sci. Comput.}, 39(6):A2741--A2762, 2017.

\bibitem{LiWerner:2018}
H.~Li and F.~Werner.
\newblock Empirical risk minimization as parameter choice rule for general
  linear regularization methods.
\newblock {\em Ann.l'Institut Henri Poincar\'e Prob. Stat.}, page in press.
  arXiv: 1703.07809, 2018.

\bibitem{Lucka:2018}
F.~Lucka, K.~Proksch, C.~Brune, N.~Bissantz, M.~Burger, H.~Dette, and
  F.~W\"{u}bbeling.
\newblock Risk estimators for choosing regularization parameters in ill-posed
  problems---properties and limitations.
\newblock {\em Inverse Probl. Imaging}, 12(5):1121--1155, 2018.

\bibitem{lukas2012performance}
M.~A. Lukas, F.~R. De~Hoog, and R.~S. Anderssen.
\newblock Performance of robust {GCV} and modified {GCV} for spline smoothing.
\newblock {\em Scand. J. Stat.}, 39(1):97--115, 2012.

\bibitem{Mallows:1973}
C.~L. Mallows.
\newblock Some comments on $c_p$.
\newblock {\em Technometrics}, 15(4):661--675, 1973.

\bibitem{Mathe:2006}
P.~Math\'{e}.
\newblock The {L}epski\u{\i} principle revisited.
\newblock {\em Inverse Problems}, 22(3):L11--L15, 2006.

\bibitem{MathePereverzev:2003}
P.~Math\'{e} and S.~V. Pereverzev.
\newblock Geometry of linear ill-posed problems in variable {H}ilbert scales.
\newblock {\em Inverse Problems}, 19(3):789--803, 2003.

\bibitem{Morozov:105020}
V.~A. Morozov.
\newblock {\em Methods for {S}olving {I}ncorrectly {P}osed {P}roblems}.
\newblock Springer-Verlag, New York, 1984.

\bibitem{paige1982lsqr}
C.~C. Paige and M.~A. Saunders.
\newblock {LSQR}: an algorithm for sparse linear equations and sparse least
  squares.
\newblock {\em ACM Trans. Math. Software}, 8(1):43--71, 1982.

\bibitem{Polyak:2018}
R.~A. Polyak.
\newblock Complexity of the regularized {N}ewton's method.
\newblock {\em Pure Appl. Funct. Anal.}, 3(2):327--347, 2018.

\bibitem{Ramani:2008}
S.~Ramani, T.~Blu, and M.~Unser.
\newblock Monte-{C}arlo sure: a black-box optimization of regularization
  parameters for general denoising algorithms.
\newblock {\em IEEE Trans. Image Process.}, 17(9):1540--1554, 2008.

\bibitem{Reginska:1996}
T.~Regi\'{n}ska.
\newblock A regularization parameter in discrete ill-posed problems.
\newblock {\em SIAM J. Sci. Comput.}, 17(3):740--749, 1996.

\bibitem{Stein:1981}
C.~M. Stein.
\newblock Estimation of the mean of a multivariate normal distribution.
\newblock {\em Ann. Statist.}, 9(6):1135--1151, 1981.

\bibitem{Thompson:1991}
A.~M. Thompson, J.~C. Brown, J.~W. Kay, and D.~M. Titterington.
\newblock A study of methods of choosing the smoothing parameter in image
  restoration by regularization.
\newblock {\em IEEE Trans. Pattern Anal. Mach. Intell.}, 13(4):326--339, 1991.

\bibitem{TikhonovGlasko:1979}
A.~N. Tihonov, V.~B. Glasko, and J.~A. Kriksin.
\newblock On the question of quasi-optimal choice of a regularized
  approximation.
\newblock {\em Dokl. Akad. Nauk SSSR}, 248(3):531--535, 1979.

\bibitem{Tikhonov}
A.~N. Tikhonov and V.~Y. Arsenin.
\newblock {\em Solutions of {I}ll-{P}osed {P}roblems}.
\newblock V. H. Winston \& Sons, Washington, D.C.: John Wiley \& Sons, New
  York-Toronto, Ont.-London, 1977.

\bibitem{TikhonovGlasko:1965}
A.~N. Tikhonov and V.~B. Glasko.
\newblock Use of the regularization method in nonlinear problems.
\newblock {\em USSR Comput. Math. Math. Phys.}, 5(3):93--107, 1965.

\bibitem{vogel1996}
C.~R. Vogel.
\newblock Non-convergence of the {$L$}-curve regularization parameter selection
  method.
\newblock {\em Inverse Problems}, 12(4):535--547, 1996.

\bibitem{wahba1990spline}
G.~Wahba.
\newblock {\em Spline {M}odels for {O}bservational {D}ata}.
\newblock SIAM, Philadelphia, PA, 1990.

\end{thebibliography}

\end{document}